\documentclass[reqno]{amsart}
\usepackage[T1]{fontenc}
\usepackage{amssymb}
\usepackage[left=1in,right=1in,top=1in,bottom=1in]{geometry}
\usepackage[ddmmyyyy]{datetime}
\usepackage{enumitem}
\usepackage{physics}
\usepackage[foot]{amsaddr}
\usepackage{bm}
\usepackage{tikz}
\usepackage{mathrsfs}
\usepackage{mathtools}
\usepackage[hidelinks]{hyperref}
\usepackage{scalerel,stackengine}
\stackMath
\usetikzlibrary{arrows.meta}
\usepackage{subcaption}
\usepackage{float}
\usepackage[style=numeric,maxbibnames=99]{biblatex}
\usepackage{comment}

\newenvironment{mycases}
   {\begin{dcases*}}
   {\end{dcases*}}
\theoremstyle{plain}
\newtheorem{theorem}{Theorem}[section]
\newtheorem{lemma}[theorem]{Lemma}

\newtheorem{proposition}[theorem]{Proposition}

\newtheorem*{corollary*}{Corollary}
\theoremstyle{remark}
\newtheorem{remark}[theorem]{Remark}
\counterwithout{figure}{section}
\theoremstyle{definition}
\newtheorem{definition}[theorem]{Definition}

\newcommand{\br}[1]{\left(#1\right)}

\newcommand{\ak}{\abs{k}}

\newcommand{\aal}{\abs{\alpha}}
\newcommand{\aab}{\abs{\beta}}
\newcommand{\sbr}[1]{\left[#1\right]}

\newcommand{\p}{\partial}
\newcommand{\ep}{\varepsilon}
\newcommand{\jb}[1]{\left\langle#1\right\rangle}
\newcommand{\set}[1]{\left\{#1\right\}}
\newcommand{\mytextfrac}[2]{\textstyle\frac{#1}{#2}}
\newcommand{\qaq}{\quad\text{and}\quad}

\newcommand{\R}{\mathbb{R}}
\newcommand{\C}{\mathbb{C}}

\newcommand{\lam}{\lambda}

\newcommand{\fin}{f_{\mathrm{in}}}
\newcommand{\Qin}{Q_{\mathrm{in}}}
\newcommand{\hin}{h_{\mathrm{in}}}
\newcommand\reallywidehat[1]{%
\savestack{\tmpbox}{\stretchto{%
  \scaleto{%
    \scalerel*[\widthof{\ensuremath{#1}}]{\kern.pt\mathchar"0362\kern.1pt}%
    {\rule{0ex}{\textheight}}
  }{\textheight}%
}{2.4ex}}%
\stackon[-6.9pt]{#1}{\tmpbox}%
}

\numberwithin{equation}{section}

\title[The semiclassical limit from Hartree to Vlasov at positive density]{\small{\mbox{The semiclassical limit from Hartree to Vlasov at positive density:} strong uniform-in-time convergence and scattering}}
\author{Marnie Smith}
\address{Department of Pure Mathematics and Mathematical Statistics, University of Cambridge}
\email{ms2724@cam.ac.uk}
\begin{document}
\begin{abstract}Strong semiclassical convergence from the Hartree equation to the Vlasov equation is established in three dimensions near Penrose-stable homogeneous steady states at positive density. For sufficiently regular integrable interaction kernels, an $O(\hbar^2)$ convergence rate in weighted Sobolev spaces is proved on finite time intervals. Combining this estimate with earlier uniform-in-$\hbar$ phase-mixing and scattering bounds for the Hartree equation yields global existence and scattering for the corresponding Vlasov solution through the semiclassical limit. If the quantum and classical initial perturbations are aligned, the semiclassical convergence is uniform for all times, and the Wigner transforms of the quantum scattering profiles converge to the classical scattering profile at the explicit rate $O((\ln\hbar^{-1})^{-1/3})$.
\end{abstract}
\date{5 September 2026}
\maketitle
\tableofcontents

\section{Introduction}
\subsection{The Hartree and Vlasov equations}
The \textit{Hartree equation} describes the quantum mean-field evolution of an interacting many-particle system. The state of the system is represented by a density matrix $\gamma^\hbar(t)$, which is a self-adjoint operator on $L^2(\R^3;\C)$, and evolves by
\begin{align}\label{H}
    \begin{mycases}
        i \hbar\p_t\gamma^{\hbar} = \sbr{-\frac{\hbar^2}{2}\Delta + K*_x\rho_{\gamma^{\hbar}}, \gamma^{\hbar}}, \\
        \gamma^{\hbar}(0) = \gamma^{\hbar}_{\mathrm{in}},
    \end{mycases}
\end{align}
where $\hbar>0$ is the reduced Planck constant, $K\colon\R^3\to\R$ is a pairwise interaction kernel, $\gamma^\hbar(t,x,y)$ denotes the integral kernel of $\gamma^\hbar(t)$, and $\rho_{\gamma^{\hbar}}(t,x):=\gamma^{\hbar}(t,x,x)$ is the associated spatial density. In the fermionic setting, equation~\eqref{H} is also known as the reduced Hartree--Fock equation, since the exchange term is omitted. In the positive-density setting considered here, the density matrices have infinite trace and represent an infinite gas of particles~\cite{LewinSabinI2015}.

The \textit{Vlasov equation} is the corresponding classical kinetic model for a system of collisionless particles, whose state is a distribution function $f(t,x,\xi)$ on phase space, evolving by
\begin{align}\label{V}
    \begin{mycases}
    \p_tf+\xi\cdot\nabla_xf-(\nabla K*_x\rho_f)\cdot\nabla_\xi f=0,\\
    f(0,x,\xi)=\fin(x,\xi),
    \end{mycases}
\end{align}
where $x\in\R^3$ and $\xi\in\R^3$ denote position and velocity, and $\rho_f(t,x):=\int_{\R^3}f(t,x,\xi)\,d\xi$ is the associated spatial density. The two models are formally related by the semiclassical limit $\hbar\to0$; the correspondence is expressed by the Wigner transform, introduced in the next subsection.

Both equations admit translation-invariant equilibria: any $\mu\in L^1(\R^3;\R_+)$ defines steady states of~\eqref{H} and~\eqref{V}, given respectively by
\begin{align*}
\gamma^\hbar_\mu:=(2\pi\hbar)^3\mu(-i\hbar\nabla) \qaq f_\mu(x,\xi):=\mu(\xi).
\end{align*}
The operator $\gamma^\hbar_\mu$ has integral kernel
\begin{align}\label{intkernelofsteadystate}
\gamma^\hbar_\mu(x,y)=\int_{\R^3}e^{i\xi\cdot(x-y)/\hbar}\mu(\xi)\,d\xi,
\end{align}
and hence, evaluating on the diagonal, its spatial density is the constant $\int_{\R^3}\mu(\xi)\,d\xi$; the distribution function $f_\mu$ has the same density. The mean-field potential therefore reduces to the constant $\br{\int_{\R^3}K}\br{\int_{\R^3}\mu}$, and both states are steady: $\gamma^\hbar_\mu$ commutes with the free Hamiltonian, and $f_\mu$ is independent of $x$. If $\mu\not\equiv0$, the density is strictly positive and both states have infinite mass: $\Tr\gamma^\hbar_\mu=+\infty$ and $f_\mu\notin L^1(\R^3\times\R^3)$.

Writing $\gamma^\hbar=\gamma^\hbar_\mu+Q^\hbar$ in~\eqref{H} yields the nonlinear evolution of the quantum perturbation,
\begin{align}\label{NH}
    \begin{mycases}
        i\hbar\p_tQ^\hbar=\sbr{-\tfrac{\hbar^2}{2}\Delta,Q^\hbar}+\sbr{K*_x\rho_{Q^\hbar},Q^\hbar+\gamma^\hbar_\mu},\\
        Q^\hbar(0)=\Qin^\hbar:=\gamma^\hbar_{\mathrm{in}}-\gamma^\hbar_\mu,
    \end{mycases}
\end{align}
while writing $f=f_\mu+h$ in~\eqref{V} yields that of the classical perturbation,
\begin{align}\label{NV}
\begin{mycases}
    \p_th+\xi\cdot\nabla_x h-(\nabla K*_x\rho_h)\cdot\nabla_\xi\br{h+\mu}=0,\\
    h(0)=\hin:=\fin-f_\mu.
\end{mycases}
\end{align}
This paper studies the semiclassical limit from~\eqref{NH} to~\eqref{NV} near the matched homogeneous steady states. Strong quantitative convergence at rate $O(\hbar^2)$ is first established in weighted Sobolev spaces on any finite time interval on which the corresponding solutions satisfy the required regularity bounds. This result is then combined with uniform-in-$\hbar$ phase-mixing and scattering estimates for the Hartree equation obtained in earlier work~\cite{smith2025phasemixinghartreeequation}. In the Penrose-stable regime, these quantum estimates are transferred to the Vlasov equation, yielding convergence uniformly for all times, including at the level of the scattering profiles.

\subsection{Phase-space formulation and free dynamics}
The semiclassical comparison uses the Wigner transform to represent quantum operators as functions on phase space and to measure the quantum and classical perturbations in matched Sobolev norms. The same transform carries the free Schrödinger evolution to free transport, allowing both equations to be studied in a common moving frame.

Throughout, use the Japanese bracket notation $\jb{\eta} := (1+\abs{\eta}^2)^{1/2}$ and $\jb{k,\eta}:=(1+\ak^2+\abs{\eta}^2)^{1/2}$; elementary inequalities for these brackets are collected in Lemma~\ref{japanesebracketlemma} of Appendix~\ref{appendixB}. For $\mu(\xi)$ and $f(x,\xi)$, adopt the Fourier convention
\begin{align}
    \widehat \mu(\eta):=\int_{\R^3} e^{-i\eta\cdot \xi}\mu(\xi)\,d\xi,\qquad\widehat f(k,\eta):=\int_{\R^3}\int_{\R^3} e^{-ik\cdot x}e^{-i\eta\cdot \xi}f(x,\xi)\,d\xi dx.\label{FTconvention}
\end{align}

The quantum and classical perturbations are measured in Sobolev spaces that control
both phase-space regularity and polynomial moments in the velocity variable; here
$\jb{\nabla_x,\nabla_\xi}^\sigma$ and $\jb{\nabla_\xi}^\sigma$ denote the Fourier
multipliers $\jb{k,\eta}^\sigma$ and $\jb{\eta}^\sigma$.
\begin{definition}[Sobolev norms]\label{Sobolevnormsdefinition}
For $\sigma\ge0$, an integer $M\ge0$ and functions $f(x,\xi)$ and $\mu(\xi)$, define
\[
\norm{f}_{H^\sigma_M}^2:=\sum_{|\alpha|\le M}\norm{\jb{\nabla_x,\nabla_\xi}^\sigma(\xi^\alpha f)}_{L^2}^2
\qaq
\norm{\mu}_{H^\sigma_M}^2:=\sum_{|\alpha|\le M}\norm{\jb{\nabla_\xi}^\sigma(\xi^\alpha\mu)}_{L^2}^2.
\]
\end{definition}

Density matrices are compared with phase-space functions through the Wigner transform.
\begin{definition}[Wigner transform]\label{Wignerdefinition}
For an operator $\gamma$ with kernel $\gamma(x,y)$, set
\[
W^{\hbar}[\gamma](x,\xi)=(2\pi\hbar)^{-3}\int_{\R^3} e^{-i\xi\cdot y/\hbar}\gamma(x+\tfrac{y}{2},x-\tfrac{y}{2})\,dy.
\]
\end{definition}

The quantum Sobolev norm of an operator is defined as the classical Sobolev norm of its Wigner transform. By construction, the quantum--classical comparison is then isometric at the level of the norms.
\begin{definition}[Quantum Sobolev norms]\label{quantumnormsdefinition}
For $\sigma\geq0$, an integer $M\geq0$ and an operator $\gamma$ on $L^2(\R^3)$, define
\begin{align*}
    \norm{\gamma}_{\mathcal L^2}:=\norm{W^{\hbar}[\gamma]}_{L^2}
    \qaq
    \norm{\gamma}_{\mathcal H^\sigma_M}:=\norm{W^{\hbar}[\gamma]}_{H^\sigma_M};
\end{align*}
in particular, $\gamma\in\mathcal H^\sigma_M$ if and only if $W^{\hbar}[\gamma]\in H^\sigma_M$, and $\mathcal L^2=\mathcal H^0_0$. 
\end{definition}

\begin{remark}[Hilbert--Schmidt norm]\label{HSnormremark}
The norm $\mathcal L^2$ is the semiclassically rescaled Hilbert--Schmidt norm:
\begin{align*}
    \norm{\gamma}_{\mathcal L^2}=(2\pi\hbar)^{-3/2}\norm{\gamma}_{\mathrm{HS}}.
\end{align*}
Indeed, writing $W^{\hbar}[\gamma](x,\xi)=(2\pi\hbar)^{-3}\mathcal F\sbr{\gamma\br{x+\mytextfrac{\cdot}{2},x-\mytextfrac{\cdot}{2}}}\br{\xi/\hbar}$, Plancherel's theorem gives $\norm{W^{\hbar}[\gamma]}_{L^2}=(2\pi\hbar)^{-3/2}\norm{\gamma(x,y)}_{L^2_{x,y}}$, and the $L^2_{x,y}$ norm of the integral kernel is precisely $\norm{\gamma}_{\mathrm{HS}}$.
\end{remark}

At integer order, the phase-space derivatives and weights also admit direct operator analogues. These are used when manipulating the Hartree equation at the operator level, before passing to Wigner transforms.
\begin{definition}[Quantum operators]\label{quantumoperators}
    The quantum analogues of spatial and velocity derivatives are
    \begin{align*}
        \bm{\nabla}_x\gamma :=[\nabla,\gamma ]&\qaq
        \bm{\nabla}_\xi \gamma :=[\mytextfrac{x}{i\hbar},\gamma ].
    \intertext{The quantum analogues of spatial and velocity weights are}
        \bm{x}\gamma:=\tfrac{1}{2}(x\gamma+\gamma x)&\qaq\bm{\xi}\gamma :=-\mytextfrac{i\hbar}{2}(\nabla \gamma +\gamma \nabla).
    \end{align*} 
    These operators are used only at integer order. Fractional Sobolev regularity for operators is defined through the Wigner transform in Definition~\ref{quantumnormsdefinition}.
\end{definition}

The moving frame for the comparison is built from the two free evolutions.
\begin{definition}[Free dynamics]\label{freeequationsdefinition}
The free transport equation is
\begin{align*}
    \begin{mycases}
        \p_tf+\xi\cdot\nabla_xf=0,\\
        f(0,x,\xi)=\fin(x,\xi),
    \end{mycases}
\end{align*}
with solution $f(t,x,\xi)=\fin(x-t\xi,\xi)$.
The free Schrödinger equation is
\begin{align*}
    \begin{mycases}
        i\hbar\p_t\gamma^\hbar=\sbr{-\frac{\hbar^2}{2}\Delta,\gamma^\hbar},\\
        \gamma^\hbar(0)=\gamma^\hbar_{\rm in},
    \end{mycases}
\end{align*}
with solution $\gamma^\hbar(t)=e^{i\frac{\hbar}{2}t\Delta}\gamma^\hbar_{\rm in}e^{-i\frac{\hbar}{2}t\Delta}$, where $(e^{i\frac{\hbar}{2}t\Delta})_{t\in\R}$ denotes the free Schrödinger group on $L^2(\R^3;\C)$; see, for instance,~\cite[Chapter~2]{Cazenave2003}.
\end{definition}

The following lemma records the compatibility properties of the Wigner transform used throughout the paper: it matches the quantum and classical derivatives and weights, conjugates the free Schrödinger evolution to free transport, maps the homogeneous states onto one another, and identifies the spatial densities.
\begin{lemma}[Properties of the Wigner transform]\label{operatorsWigner}The Wigner transform of Definition~\ref{Wignerdefinition} has the following properties:
\begin{enumerate}[label=(\roman*)]
\item (Quantum derivatives and weights)\label{intertwiningproperty} For sufficiently regular $\gamma$,
\begin{align*}
    \nabla_xW^{\hbar}[\gamma]=W^{\hbar}[\bm{\nabla}_x\gamma],\quad\nabla_\xi W^{\hbar}[\gamma]=W^{\hbar}[\bm{\nabla}_\xi\gamma],\quad xW^{\hbar}[\gamma]=W^{\hbar}[\bm{x}\gamma],\quad\xi W^{\hbar}[\gamma]=W^{\hbar}[\bm{\xi}\gamma].
\end{align*}
Thus the integer-order quantum derivatives and weights agree with the corresponding classical operations under the Wigner transform.
    \item (Free evolution) For $\gamma\in\mathcal L^2$,\label{HartreetoFreeTransport}
    \begin{align*}
        W^{\hbar}\sbr{e^{i\frac{\hbar}{2}t\Delta}\gamma e^{-i\frac{\hbar}{2}t\Delta}}(x,\xi)&=W^{\hbar}[\gamma](x-t\xi,\xi),\\
        W^{\hbar}\sbr{e^{-i\frac{\hbar}{2}t\Delta}\gamma e^{i\frac{\hbar}{2}t\Delta}}(x,\xi)&=W^{\hbar}[\gamma](x+t\xi,\xi).
    \end{align*}
    \item (Homogeneous states)\label{HomoegeneousstatesandWignertransform} For $\gamma_\mu^\hbar=(2\pi\hbar)^3\mu(-i\hbar\nabla)$,
    \begin{align*}
        W^{\hbar}[\gamma_\mu^\hbar](x,\xi)=\mu(\xi).
    \end{align*}
    \item (Spatial density)\label{densityproperty} Suppose the kernel of $\gamma$ is continuous and $W^{\hbar}[\gamma](x,\cdot)\in L^1(\R^3)$ for every $x\in\R^3$. Then
    \begin{align*}
        \rho_{W^{\hbar}[\gamma]}(x)=\int_{\R^3}W^{\hbar}[\gamma](x,\xi)\,d\xi=\gamma(x,x)=\rho_\gamma(x).
    \end{align*}
\end{enumerate}
\end{lemma}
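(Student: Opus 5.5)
All four properties will be checked by direct computation on integral kernels, starting from Definition~\ref{Wignerdefinition}. I will first work with $\gamma$ whose kernel is Schwartz, so that differentiation under the integral and Fubini are justified. Where a statement is asserted on all of $\mathcal L^2$, I will then extend by density, using that $\gamma\mapsto W^\hbar[\gamma]$ is, up to the factor $(2\pi\hbar)^{-3/2}$, an isometry from Hilbert--Schmidt operators to $L^2$ (Remark~\ref{HSnormremark}).

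\textbf{Property (i).} Write $\gamma(x+\tfrac y2,x-\tfrac y2)$ and differentiate in $x$. The kernel of $[\nabla,\gamma]$ is $(\nabla_x+\nabla_y)\gamma(x,y)$. In the variables $(x,y)\mapsto(x+\tfrac y2,x-\tfrac y2)$, the operator $\nabla_1+\nabla_2$ is exactly $\nabla_x$ of the composed function, which gives the first identity.

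The kernel of $[x/(i\hbar),\gamma]$ is $\tfrac{1}{i\hbar}(x-y)\gamma(x,y)$. After the change of variables this becomes $\tfrac{y}{i\hbar}\gamma(x+\tfrac y2,x-\tfrac y2)$. Since $\nabla_\xi e^{-i\xi\cdot y/\hbar}=\tfrac{-iy}{\hbar}e^{-i\xi\cdot y/\hbar}=\tfrac{y}{i\hbar}e^{-i\xi\cdot y/\hbar}$, this is the second identity.

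For $\bm x$, the kernel of $\tfrac12(x\gamma+\gamma x)$ is $\tfrac{x+y}{2}\gamma(x,y)$, which becomes the midpoint $x$. For $\bm\xi$, the kernel is $-\tfrac{i\hbar}{2}(\nabla_1-\nabla_2)\gamma$. Under the change of variables, $\nabla_1-\nabla_2$ is $2\nabla_y$ of the composed function. Integrating by parts in $y$ against $e^{-i\xi\cdot y/\hbar}$ then produces the factor $\xi$. The signs must be tracked carefully in each case.

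\textbf{Property (ii).} The Fourier transform of the kernel conjugates $e^{i\frac\hbar2 t\Delta}$ to multiplication by $e^{-i\frac\hbar2 t|p|^2}$ on the left and $e^{+i\frac\hbar2 t|q|^2}$ on the right. It is therefore cleanest to compute in frequency. The Wigner transform's Fourier transform in $x$, $\widehat W(k,\xi)$, is (up to constants) $\hat\gamma(\tfrac{\xi}{\hbar}+\tfrac k2,\tfrac{\xi}{\hbar}-\tfrac k2)$. The phase becomes
\[
-\tfrac{\hbar t}{2}\left(\left|\tfrac\xi\hbar+\tfrac k2\right|^2-\left|\tfrac\xi\hbar-\tfrac k2\right|^2\right)=-t\,k\cdot\xi .
\]
In $x$-space this is the translation $x\mapsto x-t\xi$. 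The second formula follows by replacing $t$ with $-t$. Density in $\mathcal L^2$ completes the argument.

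\textbf{Properties (iii) and (iv).} For (iii), substitute the kernel \eqref{intkernelofsteadystate}:
\[
\gamma^\hbar_\mu\!\left(x+\tfrac y2,x-\tfrac y2\right)=\int e^{i\zeta\cdot y/\hbar}\mu(\zeta)\,d\zeta .
\]
Then $W^\hbar$ is $(2\pi\hbar)^{-3}\int e^{-i\xi\cdot y/\hbar}\check{(\cdot)}\,dy$, which is Fourier inversion at scale $\hbar$ and returns $\mu(\xi)$. This should be interpreted distributionally when $\mu$ is merely $L^1$.

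For (iv), integrating in $\xi$ gives
\[
\int W^\hbar\,d\xi=(2\pi\hbar)^{-3}\int\!\!\int e^{-i\xi\cdot y/\hbar}\,g_x(y)\,dy\,d\xi=g_x(0)=\gamma(x,x),
\]
where $g_x(y)=\gamma(x+\tfrac y2,x-\tfrac y2)$. This is again Fourier inversion, now evaluated at $y=0$.

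\textbf{Main obstacle.} The computations are routine; the real difficulty is justification under the weak hypotheses, chiefly in (iv). There the inversion formula must be applied to a continuous $g_x$ whose Fourier transform lies in $L^1$. The standard fact that if $g\in L^1+L^2$ or $g\in\mathcal S'$ is continuous with $\hat g\in L^1$, then $g$ equals the inverse transform pointwise, handles this case. I would cite that fact rather than reprove it.
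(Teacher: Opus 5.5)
Your proposal is correct and follows essentially the same route as the paper. Property (i) is a direct kernel computation. For (ii) you use the Fourier representation of the Wigner transform, which the paper simply cites from Lions--Paul, whereas you verify it explicitly through $|\xi/\hbar+k/2|^2-|\xi/\hbar-k/2|^2=2k\cdot\xi/\hbar$. Properties (iii) and (iv) follow from Fourier inversion, and your justification in (iv) — pointwise inversion for a continuous $g_x$ with $\widehat{g_x}\in L^1$ — is exactly the one the paper uses after the substitution $\xi=\hbar p$.
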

\begin{proof}
The identities in~\ref{intertwiningproperty} follow by direct computation from Definition~\ref{Wignerdefinition}; iterating them gives the corresponding integer-order identities.
The identities in~\ref{HartreetoFreeTransport} follow from the Fourier representation of the Wigner transform and are standard; see for instance \cite{LionsPaul1993}. Property~\ref{HomoegeneousstatesandWignertransform} follows by substituting the integral kernel~\eqref{intkernelofsteadystate} into Definition~\ref{Wignerdefinition} and applying Fourier inversion.
For~\ref{densityproperty}, fix $x$ and write $g_x(y):=\gamma(x+\tfrac{y}{2},x-\tfrac{y}{2})$, so that $W^{\hbar}[\gamma](x,\xi)=(2\pi\hbar)^{-3}\widehat{g_x}(\xi/\hbar)$. The substitution $\xi=\hbar p$ gives
\begin{align*}
    \int_{\R^3}W^{\hbar}[\gamma](x,\xi)\,d\xi=\frac{1}{(2\pi)^3}\int_{\R^3}\widehat{g_x}(p)\,dp=g_x(0)=\gamma(x,x),
\end{align*}
by Fourier inversion, justified since $\widehat{g_x}\in L^1(\R^3)$ (equivalently, $W^{\hbar}[\gamma](x,\cdot)\in L^1(\R^3)$) and $g_x$ is continuous.
\end{proof}

\subsection{Interaction kernel and stability}
Throughout, the interaction kernel is assumed to be real-valued and integrable, $K\in L^1(\R^3;\R)$. In particular, the present framework does not include non-integrable interactions such as the Coulomb kernel. The additional Fourier decay required in the analysis is
quantified by the seminorms
\begin{align*}
    [K]_1:=\sup_{k\in\R^3}\jb{k}\abs{\widehat K(k)},
    \qquad
    [K]_2:=\sup_{k\in\R^3}\jb{k}^2\abs{\widehat K(k)}.
\end{align*}
Clearly, $[K]_1\leq [K]_2$. In the estimates below, implicit constants arising from the multiplier bounds on $K$ depend only on the relevant finite seminorm specified in each statement.

A representative example satisfying these conditions is the screened Coulomb (or Yukawa) potential
\[
    K(x) = \frac{e^{-\alpha\abs{x}}}{\abs{x}}, \qquad \widehat{K}(k) = \frac{4\pi}{\abs{k}^2 + \alpha^2},
\]
for \( \alpha > 0 \). 

The kernel assumptions above are sufficient for the finite-time semiclassical comparison. The large-time result additionally requires linear stability of the homogeneous state, uniformly as $\hbar\to0$. This is encoded by the following uniform Penrose condition, previously used in the semiclassical Hartree setting in~\cite{smith2025phasemixinghartreeequation}.
\begin{definition}[Uniform Penrose condition] Let $K\colon\R^3\to\R$ and $\mu\in L^1(\R^3;\R_+)$. The pair $(K,\mu)$ satisfies the \textit{uniform Penrose condition} if there exist \(\kappa,\delta > 0\) such that
\begin{align}\label{Penrose2}
    \inf_{\hbar\in(0,\delta]}\inf_{\substack{\Re\lambda \geq 0 \\ k \in \R^3}} \abs{1 +  \frac{2}{\hbar} \widehat{K}(k) \int_0^\infty e^{-\lam t}\sin\br{\frac{1}{2}\hbar t \ak^2} \widehat{\mu}\br{kt}\,dt} \geq \kappa.
\end{align}
\end{definition}

\begin{remark}\label{remark:uniformpenroseimpliesclassical}
The uniform Penrose condition implies the classical one (in, for example, \cite{mou-vil-2011,BedrossianMasmoudiMouhot2018}) whenever $K\in L^1(\R^3)$ and $\mu\in H^{r}_2(\R^3)$ for some $r>2$, and is conversely implied by it, for $\hbar$ small and up to halving the stability constant, under slightly more decay; see Lemma~\ref{Penroseequivalence}.
\end{remark}

\subsection{Main results}
The first result, Theorem~\ref{maintheorem1}, establishes quantitative semiclassical convergence from the Hartree to the Vlasov equation near homogeneous steady states, with rate $O(\hbar^2)$ on finite time intervals. The second, Theorem~\ref{theoremonapplication}, combines the first with the scattering theory for the Hartree equation to prove, in the Penrose-stable regime, a convergence rate that is uniform in time and, in particular, semiclassical convergence of the scattering profiles.

The analysis is performed in the free-transport frame, a standard approach in Landau damping that isolates the resonant interaction with the background distribution and enables the construction of the scattering map. The finite-time estimate also transfers to the original coordinates, whereas the uniform-in-time statement is naturally formulated in the free-transport frame, as discussed below.
\begin{theorem}[Quantitative finite-time semiclassical convergence]\label{maintheorem1}
Let $\sigma>5/2$ and $T>0$. Assume $\mu\in H^{\sigma+3}_2(\R^3)$ and $K\in L^1(\R^3)$. Let $h(t)$ be a real-valued solution of the nonlinear Vlasov
equation~\eqref{NV} on $[0,T]$ with initial data $h_{\mathrm{in}}$, and define
\[
g(t,z,\xi):=h(t,z+t\xi,\xi).
\]
Let $\delta\in(0,1]$ and, for $\hbar\in(0,\delta]$, let $Q^\hbar(t)$ be a
self-adjoint solution of the nonlinear Hartree equation~\eqref{NH} on $[0,T]$
with initial data $\Qin^\hbar$, and define
\[
P^\hbar(t):=e^{-i\frac{\hbar}{2}t\Delta}Q^\hbar(t)e^{i\frac{\hbar}{2}t\Delta}.
\]
Assume that there exists $M_T>0$ such that one of the following holds:
\begin{enumerate}[label=(\alph*)]
  \item \label{amaintheorem1} $[K]_1<\infty$ and
  \[
  \|g\|_{C([0,T];H^\sigma_2)}+\sup_{\hbar\in(0,\delta]}\|P^\hbar\|_{C([0,T];\mathcal H^{\sigma+3}_2)}\le M_T,
  \]
  \item \label{bmaintheorem1} $[K]_2<\infty$ and
  \[
  \|g\|_{C([0,T];H^{\sigma+3}_2)}+\sup_{\hbar\in(0,\delta]}\|P^\hbar\|_{C([0,T];\mathcal H^{\sigma}_2)}\le M_T.
  \]
\end{enumerate}

Then there exist constants $a_{\sigma,K},b_{\sigma,K}>0$, depending only on
$\sigma$ and $K$, such that, for all $t\in[0,T]$ and $\hbar\in(0,\delta]$,
\begin{align}\label{mainmainconc}
    \norm{g(t)-W^{\hbar}[P^\hbar(t)]}_{H^\sigma_2}
    \le C_1(T)\norm{h_{\rm in}-W^{\hbar}[\Qin^\hbar]}_{H^\sigma_2}
    +C_2(T)\hbar^2,
\end{align}
where
\begin{align}\label{Cconstants}
    \begin{split}
        C_1(T)&:=\exp\sbr{a_{\sigma,K}\br{T\norm{\mu}_{H^{\sigma+1}_2}+T\jb{T}M_T}},\\
        C_2(T)&:=C_1(T)b_{\sigma,K}M_T\br{\norm{\mu}_{H^{\sigma+3}_2}+T\jb{T}M_T}.
    \end{split}
\end{align}
\end{theorem}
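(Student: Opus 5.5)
The plan is to derive the equations satisfied by $g$ and by $F^\hbar(t):=W^\hbar[P^\hbar(t)]$ in the free-transport frame, subtract them, and close a Grönwall estimate in $H^\sigma_2$ for the difference $w:=g-F^\hbar$.

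\textbf{Step 1: the two profile equations.} Setting $\phi:=K*_x\rho_h$, the function $g$ satisfies
\[
\p_t g=\nabla\phi(t,z+t\xi)\cdot(\nabla_\xi-t\nabla_z)\br{g+\mu}.
\]
On the Fourier side this reads $\widehat\rho_h(t,k)=\widehat g(t,k,kt)$. By Lemma~\ref{operatorsWigner}\ref{HartreetoFreeTransport},\ref{densityproperty}, the Hartree perturbation $P^\hbar$ has Wigner transform $F^\hbar$ whose density is $\widehat F^\hbar(t,k,kt)$. The Wigner transform of the commutator $\frac{1}{i\hbar}[V,\cdot]$ is the Moyal bracket, which in Fourier variables replaces $ik\cdot\eta$-type factors by
\[
\frac{2}{\hbar}\sin\br{\tfrac{\hbar}{2}k\cdot(\eta-kt)}.
\]
Thus $F^\hbar$ solves the same equation as $g$, but with the classical multiplier $k\cdot(\eta-kt)$ replaced by this quantum one, both in the nonlinear term and in the term involving $\mu$. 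The same replacement appears in the Penrose symbol~\eqref{Penrose2}.

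\textbf{Step 2: the equation for the difference.} Subtracting the two equations gives
\[
\p_t w=\mathcal L_\mu[w]+\mathcal N(g,w)+\mathcal N(w,F^\hbar)+\mathcal R_\hbar,
\]
where:
\begin{itemize}
\item $\mathcal L_\mu$ is the classical linearised operator, acting on $\rho_w$;
\item $\mathcal N$ is the classical bilinear term;
\item $\mathcal R_\hbar$ collects the Moyal errors applied to $(\rho_{F^\hbar},\mu)$ and to $(\rho_{F^\hbar},F^\hbar)$.
\end{itemize}
The error is controlled by the multiplier bound
\[
\abs{\tfrac{2}{\hbar}\sin(\tfrac{\hbar}{2}a)-a}\lesssim \hbar^2|a|^3 .
\]
It costs three derivatives. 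For the $\mu$-term this gives $\hbar^2\norm{\mu}_{H^{\sigma+3}_2}M_T$, which accounts for the first summand of $C_2$.

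The bilinear error is where cases \ref{amaintheorem1} and \ref{bmaintheorem1} separate:
\begin{itemize}
\item in case \ref{amaintheorem1}, the three extra derivatives are placed on $F^\hbar$, which lies in $\mathcal H^{\sigma+3}_2$;
\item in case \ref{bmaintheorem1}, I would first rewrite the comparison symmetrically so that the high-regularity factor is $g$. Concretely, I would subtract the classical equation written for $F^\hbar$, or interchange roles by writing the error relative to $g$, so that $g\in H^{\sigma+3}_2$ absorbs the derivatives. The stronger decay $[K]_2$ is used to absorb the extra $\jb{k}$ coming from placing derivatives on the potential.
\end{itemize}
The factors $t$ appearing in $\nabla_\xi-t\nabla_z$ produce the weights $T\jb{T}$.

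\textbf{Step 3: energy estimate.} Apply $\jb{\nabla_z,\nabla_\xi}^\sigma\xi^\alpha$ with $|\alpha|\le2$, pair with the same quantity applied to $w$, and bound each term in turn.

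\begin{itemize}
\item \emph{Density and linear term.} For $\sigma>3/2$ one has the trace estimate
\[
\norm{\jb{k}^{\sigma}\widehat\rho_w}_{L^2_k}\lesssim\norm{w}_{H^\sigma_2},
\]
obtained by restricting $\widehat w$ to $\eta=kt$. The $\xi$-moments give the needed $L^1_\xi$-type control; this step is where $M=2$ is used. The linear term is then bounded by $\norm{\mu}_{H^{\sigma+1}_2}\norm{w}_{H^\sigma_2}$, with $\jb{t}$ factors. Here $[K]_1$ compensates the single derivative in $\nabla\phi$.
\item \emph{Transport term.} The term $\nabla\phi\cdot(\nabla_\xi-t\nabla_z)w$ is a transport term. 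Since $g$ is real, it is skew up to a commutator. The top-order loss is handled by a Kato–Ponce commutator estimate, which requires $\nabla\phi\in H^{\sigma}$ with $\sigma>5/2$ to place $\nabla^2\phi$ in $L^\infty$.
\item \emph{Weight commutators.} Commutators of the weights $\xi^\alpha$ with $\nabla_\xi$ produce lower-order moments, which are harmless.
\end{itemize}

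Combining these yields
\[
\frac{d}{dt}\norm{w}_{H^\sigma_2}\lesssim \br{\norm{\mu}_{H^{\sigma+1}_2}+\jb{T}M_T}\norm{w}_{H^\sigma_2}+\hbar^2 M_T\br{\norm{\mu}_{H^{\sigma+3}_2}+T\jb{T}M_T}.
\]
Grönwall's inequality then gives~\eqref{mainmainconc} with the constants~\eqref{Cconstants}.

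\textbf{Main obstacle.} I expect the main difficulty to be the top-order transport term combined with the growing factor $t\nabla_z$, together with the $\hbar^2$ remainder in case \ref{bmaintheorem1}. There the derivative loss must fall on the classical solution rather than the quantum one, and the Moyal remainder is not a standard transport term.

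For the remainder, I would first write $\mathcal R_\hbar$ as a Fourier integral with an explicit Taylor-remainder symbol. I would then apply Young/Schur estimates in $(k,\eta)$, using $\jb{k,\eta}^\sigma\lesssim\jb{k-\ell,\eta-\ell t}^\sigma+\jb{\ell}^\sigma\jb{t}^\sigma$ (Lemma~\ref{japanesebracketlemma}) to split the derivatives between the density and the profile.
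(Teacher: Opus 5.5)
Your overall architecture matches the paper's: the moving frame, the sine multiplier, two splittings of the difference so that the $O(\hbar^2)$ remainder lands on the more regular solution, a cubic Taylor bound, and Gr\"onwall. However, two steps do not go through as you propose them.

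\textbf{The top-order term in case (b).} There $g^\hbar:=W^\hbar[P^\hbar]$ is only in $H^\sigma_2$, so the splitting is forced: $\p_t w=L^\hbar_\mu[w]+N^\hbar[g^\hbar,w]+N^\hbar[w,g]+R^L_\mu[g]+R^N[g,g]$. The operator that differentiates $w$ is therefore the quantum one, $N^\hbar[g^\hbar,\cdot]$. This is a nonlocal finite-difference (Moyal) operator, not a transport field, so it is not covered by a Kato--Ponce commutator estimate. Nor can you trade it for $N[g^\hbar,\cdot]$, since the difference $-R^N[g^\hbar,w]$ costs three derivatives of $w$.

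What is missing is the Fourier-side form of the cancellation (Lemma~\ref{symmetrylemma}). In $\Re\jb{w,N^\hbar[g^\hbar,w]}_{H^\sigma}$, the substitution $(k,\eta,\ell)\mapsto(k+\ell,\eta+t\ell,-\ell)$ leaves $\eta-kt$ invariant. It flips the sign of $\sin(\tfrac{\hbar}{2}\ell\cdot(\eta-kt))$ exactly as it flips $\ell\cdot(\eta-kt)$. Since $K$ and $g^\hbar$ are real (the latter by self-adjointness of $P^\hbar$), the symmetric part has zero real part. Only the factor $\jb{k,\eta}^\sigma-\jb{k-\ell,\eta-\ell t}^\sigma$ survives, and it is bounded using $\abs{2\hbar^{-1}\sin(\tfrac{\hbar}{2}\ell\cdot(\eta-kt))}\leq\abs{\ell}\abs{\eta-kt}$, as in the classical case.

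The same argument is also the right tool in case (a). A standard isotropic Kato--Ponce estimate on $\R^6$ does not apply directly, because the coefficient $\nabla\phi(z+t\xi)$ is constant along three directions. What works is that its Fourier transform lives on $\set{(\ell,\ell t)}$, so both factors need only three-dimensional integrability ($L^1_kL^2_\eta$ for $w$, $L^1_\ell$ on the trace for $g$). That is where $\sigma>5/2$ enters.

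Relatedly, the velocity weights are not ``harmless'' for the quantum operators. $\eta$-derivatives falling on the sine produce $\hbar^{\abs{\beta}-1}\abs{\ell}^{\abs{\beta}}$, i.e.\ $\hbar\abs{\ell}^2$ when $\abs{\beta}=2$. Absorbing this is exactly where $[K]_2$ and $\hbar\leq1$ are used (Lemma~\ref{decompositionlemma}); the semiclassical remainders themselves need only $[K]_1$.

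\textbf{The time dependence of the constants.} Your differential inequality has an inhomogeneous term that does not decay in $t$. Gr\"onwall then gives $C_2(T)$ with an extra factor $T$, e.g.\ $T\norm{\mu}_{H^{\sigma+3}_2}$ instead of $\norm{\mu}_{H^{\sigma+3}_2}$.

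The paper obtains the stated $C_2(T)$ because the linear remainder decays like $\jb{t}^{-2}$. In $\abs{k}^3\abs{\eta-kt}^3$, one factor $\abs{k}$ is absorbed by $[K]_1$ and $\jb{\eta-kt}^3$ falls on $\mu$. The remaining $\abs{k}^2=\abs{(k,kt)}^2/\jb{t}^2$ is paid for by $\norm{\jb{k,kt}^{\sigma+2}\widehat g(k,kt)}_{L^2_k}\lesssim\norm{g}_{H^{\sigma+2}_2}$. This requires the trace bound with the full weight $\jb{k,kt}^\sigma$ (Lemma~\ref{boundonenergy}), not the $\jb{k}^\sigma$ version you state.

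The same sharp trace bound does two further jobs. It keeps the linear term free of powers of $t$, via $\jb{k,\eta}^\sigma\lesssim\jb{k,kt}^\sigma\jb{\eta-kt}^\sigma$. It also absorbs the factor $\abs{(\ell,\ell t)}$ from the commutator into the coefficient's norm, so only a single $\jb{t}$ multiplies the nonlinear terms. With $\jb{k}^\sigma$ alone you would pick up $\jb{t}^\sigma$ in the linear term, and the exponent of $C_1(T)$ would not be $a_{\sigma,K}(T\norm{\mu}_{H^{\sigma+1}_2}+T\jb{T}M_T)$.
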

The proof of Theorem~\ref{maintheorem1} is given in Section~\ref{Gronwallsection}.

\begin{remark}\label{firstRemark}
The following remarks explain how the convergence rate depends on the initial alignment, identify the origin of the $O(\hbar^2)$ error, clarify the a priori bounds~\ref{amaintheorem1} and~\ref{bmaintheorem1}, and translate the estimate back to the original variables.
\begin{itemize}
\item If the initial data are aligned, $\norm{h_{\rm in}-W^{\hbar}[\Qin^\hbar]}_{H^\sigma_2}=O(\hbar^p)$ for some $p>0$, then
\begin{align*}
    \norm{W^{\hbar}[P^\hbar(t)]- g(t)}_{H^\sigma_2}=O(\hbar^{\min\set{2,p}})\quad\text{for all }t\in[0,T].
\end{align*}
\item The order $O(\hbar^2)$ has a simple structural origin. Formally, the potential commutator corresponds to a centred difference quotient: its leading term is the classical force term, while symmetry cancels the order-$\hbar$ correction, leaving a first non-vanishing remainder of order $\hbar^2$. The assumptions on $K$ allow this remainder and the accompanying nonlinear terms to be controlled in the weighted Sobolev energy estimates leading to the Gr\"onwall argument. The same formal cancellation remains present for Coulomb and gravitational interactions, but the singular low-frequency behaviour of their forces falls outside the Fourier multiplier bounds used here, so the present argument does not apply. Existing strong results for those interactions instead obtain, by different methods, an $O(\hbar)$ rate that is expected to be optimal within their framework~\cite{ChongLaflecheSaffirio2023}.
\item The bounds in \ref{amaintheorem1} and \ref{bmaintheorem1} are assumed a priori; Appendix~\ref{PropagationAppendixReference} provides criteria under which they hold. By Propositions~\ref{prop:vlasovLWP} and~\ref{Hartreeregularityproposition}, the $H^\sigma_2$ Vlasov norm and $\mathcal H^{\sigma+3}_2$ Hartree norm required in \ref{amaintheorem1} (respectively the $H^{\sigma+3}_2$ Vlasov norm and $\mathcal H^\sigma_2$ Hartree norm required in \ref{bmaintheorem1}) are propagated from the initial data provided a lower-order Sobolev norm remains bounded, uniformly in $\hbar$ in the Hartree case. In applications, the high-regularity bounds therefore reduce to assumptions on the initial data together with control of lower-order norms. 
\item Theorem~\ref{maintheorem1} yields an estimate for the full solutions $f=f_\mu+h$ and $\gamma^\hbar=\gamma^\hbar_\mu+Q^\hbar$ in the original variables. By Lemma~\ref{operatorsWigner}~\ref{HartreetoFreeTransport} and~\ref{HomoegeneousstatesandWignertransform},
\[
W^{\hbar}[\gamma^\hbar(t)](x,\xi)=\mu(\xi)+W^{\hbar}[P^\hbar(t)](x-t\xi,\xi),
\]
and hence, with $\Phi_t(x,\xi):=(x-t\xi,\xi)$,
\[
f(t)-W^{\hbar}[\gamma^\hbar(t)]
   =\bigl(g(t)-W^{\hbar}[P^\hbar(t)]\bigr)\circ\Phi_t.
\]
Since $\Phi_t$ preserves the measure, the $L^2$ norms of the two differences coincide, and the weighted norms satisfy $\norm{u\circ\Phi_t}_{H^\sigma_2}\lesssim_\sigma\jb{t}^\sigma\norm{u}_{H^\sigma_2}$ by Lemma~\ref{japanesebracketlemma}~\ref{JBshear}. Consequently~\eqref{mainmainconc} holds for $f(t)-W^{\hbar}[\gamma^\hbar(t)]$ with the same rate $O(\hbar^2)$ and constants enlarged by $\jb{t}^\sigma$.
\end{itemize}
\end{remark}

The constants in~\eqref{mainmainconc} are independent of $\hbar$ for every fixed time horizon $T$, but grow rapidly with $T$, so Theorem~\ref{maintheorem1} gives no direct information about the limit $t\to\infty$. The required large-time input is supplied entirely by the quantum dynamics: in previous work~\cite{smith2025phasemixinghartreeequation}, phase mixing and scattering for the Hartree equation near Penrose-stable homogeneous states were established uniformly in $\hbar$, with the framed solutions $P^\hbar(t)$ converging to scattering profiles $Q^\hbar_\infty$ at rate $\jb{t}^{-3/2}$ and their higher Sobolev norms growing at most polynomially.

Combined with Theorem~\ref{maintheorem1}, these uniform quantum estimates transfer to the Vlasov equation: fixed-time semiclassical convergence carries the polynomial norm bounds to the classical solution, yielding global existence by continuation, and uniform quantum scattering shows that $g(t)$~is Cauchy in~$H^\sigma_2$ and hence scatters to a profile $h_\infty$, without any input from the classical damping theory. This transfer is recorded in Proposition~\ref{Vlasovscattering}. The finite-time and scattering estimates can then be balanced: up to polynomial factors in $T$, the semiclassical error on $[0,T]$ for data aligned at order $O(\hbar^p)$ is bounded by $\hbar^{\min\set{2,p}}e^{AT^{9/2}}$, whereas both solutions are within $O(T^{-3/2})$ of their scattering profiles after time $T$, and choosing the horizon $T=T(\hbar)$ yields the uniform-in-time logarithmic rate stated in Theorem~\ref{theoremonapplication}.

\begin{theorem}[Uniform-in-time semiclassical convergence]\label{theoremonapplication}
Let $\sigma>5/2$ and assume $K\in L^1(\R^3)$ with $[K]_2<\infty$. There exist exponents $\overline\sigma>\sigma_0>\sigma$ with the following property. Let $\mu\in H^{\overline\sigma}_4(\R^3)$ be non-negative and suppose that the pair $(K,\mu)$ satisfies the uniform Penrose condition~\eqref{Penrose2} with constants $\kappa>0$ and $\delta\in(0,1]$. Then there exists $\varepsilon_0>0$ such that the following holds.

Let $\hin\in H^\sigma_2$ and, for each $\hbar\in(0,\delta]$, let $Q^\hbar_{\mathrm{in}}$ be a self-adjoint semiclassical initial perturbation. Assume the following:
\begin{enumerate}[label=(\roman*)]
\item (Smallness) the initial perturbations satisfy
\begin{align}\label{initialconds}
\sup_{\hbar\in(0,\delta]}\sum_{|\alpha|\le2}\|\bm{x}^\alpha Q^\hbar_{\mathrm{in}}\|_{\mathcal H^{\sigma_0}_2}\le\ep_0,
\end{align}
\item\label{alignmentbullet} (Alignment) there exist $p>0$ and $C_0>0$ such that, for every $\hbar\in(0,\delta]$,
\begin{align*}
\norm{W^{\hbar}[Q^\hbar_{\mathrm{in}}]-\hin}_{H^\sigma_2}
\leq C_0\hbar^p.
\end{align*}
\end{enumerate}
Then the corresponding Vlasov and Hartree solutions exist globally and admit scattering profiles
\begin{align*}
h_\infty
:=
\lim_{t\to\infty}g(t)
\quad\text{in }H^\sigma_2,
\qquad
Q^\hbar_\infty
:=
\lim_{t\to\infty}P^\hbar(t)
\quad\text{in }\mathcal H^\sigma_2.
\end{align*}
Moreover, there exist $\delta_*\in(0,\delta]$ and a constant $C_\infty=C_\infty(\sigma,\mu,K,\kappa,C_0,p)>0$
such that, adopting the conventions $g(\infty):=h_\infty$ and $P^\hbar(\infty):=Q^\hbar_\infty$, for every $t\in[0,\infty]$ and $\hbar\in(0,\delta_*]$,
\begin{align}\label{uniformrate}
\norm{W^{\hbar}[P^\hbar(t)]-g(t)}_{H^\sigma_2}
\leq
C_\infty\br{\ln\frac{1}{\hbar}}^{-1/3}.
\end{align}
\end{theorem}
The proof of Theorem~\ref{theoremonapplication} is given in Section~\ref{uniformtimesection}.
\begin{remark}[Uniformity and convergence rate]
Unlike Theorem~\ref{maintheorem1}, which gives a conditional comparison on a finite interval $[0,T]$ over which the required a priori bounds hold, Theorem~\ref{theoremonapplication} proves global existence of both solutions and the existence of their scattering profiles. Estimate~\eqref{uniformrate} controls the semiclassical error uniformly over the entire evolution and at the scattering limit $t=\infty$, where it gives strong convergence of the quantum scattering profiles to the classical scattering profile. The logarithmic rate results from balancing the growth of the finite-time estimate against the $\jb{t}^{-3/2}$ scattering decay; no optimality is claimed.
\end{remark}

\begin{figure}[H]
\vspace{0cm}
\centering
\begin{tikzpicture}[domain=0:5,xscale=0.25,yscale=0.25]
\draw [->] (1.75,0)--(18.25,0);
\draw [->] (1.75,14)--(18.25,14);
\draw [->] (0,12.5)--(0,1.5);
\draw [->] (20,12.5)--(20,1.5);
\node at (0,0) {$f_{\mathrm{in}}$};
\node at (20,0) {$f_{\infty}$};
\node at (20,14) {$\gamma^\hbar_{\infty}$};
\node at (0,14) {$\gamma^\hbar_{\mathrm{in}}$};
\node [below] at (10,0) {\footnotesize{$t\to\infty$}};
\node [above] at (10,14) {\footnotesize{$t\to\infty$}};
\node [left] at (0,7) {\footnotesize{$\hbar\to0$}};
\node [right] at (20,7) {\footnotesize{$\hbar\to0$}};
\end{tikzpicture}
\captionsetup{width=.7\linewidth}
\caption{Semiclassical limit and scattering at positive density. The horizontal limits are taken along the free-transport flow, and the vertical limits at the level of Wigner transforms.}
\label{map}
\end{figure}

The scheme is summarised in Figure~\ref{map}. Since the homogeneous steady states are invariant under the free dynamics, the full solutions, transported along the free flow, differ from them exactly by the framed perturbations:
\begin{align*}
f(t)\circ\Phi_t^{-1}=f_\mu+g(t)
\qaq
e^{-i\frac{\hbar}{2}t\Delta}\gamma^\hbar(t)e^{i\frac{\hbar}{2}t\Delta}
=\gamma^\hbar_\mu+P^\hbar(t).
\end{align*}
With $f_\infty:=f_\mu+h_\infty$ and
$\gamma^\hbar_\infty:=\gamma^\hbar_\mu+Q^\hbar_\infty$, the two horizontal
arrows of the figure abbreviate the long-time convergence of the framed
solutions,
\begin{align*}
e^{-i\frac{\hbar}{2}t\Delta}\gamma^\hbar(t)e^{i\frac{\hbar}{2}t\Delta}
\rightarrow\gamma^\hbar_\infty
\qaq
f(t)\circ\Phi_t^{-1}\rightarrow f_\infty
\qquad\text{as }t\to\infty,
\end{align*}
while the two vertical arrows are semiclassical limits of Wigner transforms, in which the steady states cancel because
$W^{\hbar}[\gamma^\hbar_\mu]=\mu$
(Lemma~\ref{operatorsWigner}~\ref{HomoegeneousstatesandWignertransform}). Two of the four arrows are given. Quantum scattering, the arrow $\gamma^\hbar_{\mathrm{in}}\to\gamma^\hbar_\infty$ along the top, holds uniformly in $\hbar$ by~\cite{smith2025phasemixinghartreeequation}; convergence of the initial data, $W^{\hbar}[\gamma^\hbar_{\mathrm{in}}]\to f_{\mathrm{in}}$ on the left, is an assumption. The remaining two arrows are deduced. Classical scattering, the arrow $f_{\mathrm{in}}\to f_\infty$ along the bottom, coincides with the conclusion of Bedrossian, Masmoudi and Mouhot~\cite{BedrossianMasmoudiMouhot2018}, but enters as an output rather than an input: Proposition~\ref{Vlasovscattering} derives it from quantum scattering and the convergence of the initial data, through the finite-time estimate of Theorem~\ref{maintheorem1}. Finally, convergence of the scattering states, $W^{\hbar}[\gamma^\hbar_\infty]\to f_\infty$ on the right, is the new conclusion of Theorem~\ref{theoremonapplication}, which yields strong convergence in weighted Sobolev norms with an explicit rate and controls the distance between the two evolutions uniformly for all $t\in[0,\infty]$.

\subsection{Existing results}\label{existingresultssection}
This subsection recalls the results most directly related to the two ingredients of the paper: large-time dynamics near homogeneous states and semiclassical convergence from Hartree to Vlasov. 

\subsubsection{Large-time dynamics}

Landau damping is a phase-mixing phenomenon in which macroscopic observables decay near homogeneous steady states without dissipation. Landau first identified the phenomenon at the linearised level for the Vlasov equation~\cite{Landaupaper1946}. The first proof of nonlinear Landau damping was obtained in analytic regularity~\cite{mou-vil-2011}, and the subject has since been developed in several directions; see, for example,~\cite{bed-mas-mou-2016,BedrossianMasmoudiMouhot2018,GrenierNguyenRodnianski2021,BedrossianMasmoudiMouhot2022,IonescuPausaderWangWidmayer2024Poisson}. In particular, damping and scattering were proved in weighted Sobolev spaces on the whole space, for localised perturbations of an infinite homogeneous background and integrable screened interactions, including the screened Coulomb kernel~\cite{BedrossianMasmoudiMouhot2018}.

Analogous asymptotic stability has been studied for the Hartree equation. Lewin and Sabin~\cite{LewinSabinII2014} proved weak convergence to homogeneous equilibria in two dimensions, and the analysis was extended to dimensions $d\geq3$, with scattering near positive-temperature equilibria~\cite{ChenHongPavlovic2018}. Scattering for initial perturbations at critical regularity was established in~\cite{CollotdeSuzzoni2020,CollotdeSuzzoni2022,BorieHadamaSabin2025}. With more regular initial perturbations, the large-time behaviour has been quantified through pointwise decay and refined scattering estimates. For the Coulomb interaction, which lies outside the short-range Penrose setting, the linearised dynamics near homogeneous equilibria were analysed in~\cite{NguyenYou2023}, followed by decay and modified scattering for the nonlinear equation near vacuum~\cite{NguyenYou2024}. For short-range kernels, You~\cite{You2024} established phase-mixing estimates and Hilbert--Schmidt scattering at fixed $\hbar$, and nonlinear Landau damping and scattering were recently proved for the Hartree--Fock equation in the presence of a sufficiently small exchange interaction~\cite{NguyenYou2026HartreeFock}.

The author's earlier work~\cite{smith2025phasemixinghartreeequation} established pointwise phase-mixing decay and scattering in weighted quantum Sobolev spaces near Penrose-stable homogeneous states, with estimates uniform in $\hbar$; the bounds required here are recalled in Theorem~\ref{Smithresult}. This uniformity permits the quantum scattering theory to be transferred to the Vlasov equation through the finite-time semiclassical limit and the comparison of the two dynamics to be extended uniformly in time. In particular, Proposition~\ref{Vlasovscattering} recovers through the limit a Sobolev-space Vlasov scattering result of the type proved by Bedrossian, Masmoudi and Mouhot~\cite{BedrossianMasmoudiMouhot2018}, with hypotheses inherited from the quantum theory.
\subsubsection{Semiclassical convergence and scattering profiles}
The semiclassical limit from Hartree to Vlasov was first established through weak convergence of Wigner transforms~\cite{LionsPaul1993,MarkowichMauser1993}. Quantitative strong convergence was first obtained in Hilbert--Schmidt and trace topologies for regular interactions~\cite{AthanassoulisPaulPezzottiPulvirenti2011,BenedikterPortaSaffirioSchlein2016}, and subsequently sharpened to the optimal $O(\hbar)$ rate and extended to singular interactions, including the Coulomb potential~\cite{LaflecheSaffirio2023,ChongLaflecheSaffirio2023}. These results concern finite-mass states and fixed time intervals. The large-time semiclassical limit near vacuum was recently studied by Hadama and Hong~\cite{HadamaHong2025semiclassicallimitquantumscattering}, for short-range interactions including singular inverse-power laws. They obtained quantum dispersion estimates uniform in $\hbar$, together with scattering at each fixed $\hbar$ under a smallness condition independent of $\hbar$, and proved that, along subsequences, the Wigner transforms of the quantum scattering profiles converge weakly in $L^2$ to a Vlasov scattering profile.

At positive density, the semiclassical limit appears to have been addressed only by Lewin and Sabin~\cite{LewinSabin2020}, who proved weak convergence of Wigner transforms in the relative-energy setting, without a rate, and used it to construct global Vlasov solutions. Strong semiclassical convergence near translation-invariant steady states was raised as an open problem in the author's earlier work~\cite[Section~1.7]{smith2025phasemixinghartreeequation}, where the uniform-in-$\hbar$ scattering theory was proposed as a framework for recovering classical scattering data through the limit. This paper resolves that problem for the regular kernels considered here: Theorem~\ref{maintheorem1} provides the strong quantitative comparison, with the $O(\hbar^2)$ rate of Remark~\ref{firstRemark}, and Theorem~\ref{theoremonapplication} extends it, for aligned data near Penrose-stable states, uniformly in time in the free-transport frame, including the convergence of the scattering profiles.

\subsection{Outline of proof}
Section~\ref{movingframeformulation} rewrites both equations in the frame of the free-transport flow, where they take the same structural form and their difference can be tracked directly. Section~\ref{energyestimatessection} supplies the two ingredients of the comparison: energy estimates in weighted Sobolev spaces for the shared linear and bilinear structure, and $O(\hbar^2)$ bounds on the operators encoding the semiclassical discrepancy. Section~\ref{Gronwallsection} combines these in a Gr\"onwall argument, proving Theorem~\ref{maintheorem1}. Section~\ref{scatteringproofsection} transfers the uniform-in-$\hbar$ scattering theory of~\cite{smith2025phasemixinghartreeequation} to the Vlasov equation through the finite-time semiclassical limit, proving Proposition~\ref{Vlasovscattering}, and Section~\ref{uniformtimesection} balances the finite-time estimate against the scattering decay, with a time horizon chosen as a function of $\hbar$, proving Theorem~\ref{theoremonapplication}.

\section{Moving-frame formulation}\label{movingframeformulation}
Both equations are now rewritten in frames adapted to phase mixing near the homogeneous steady states: the Vlasov perturbation is composed with the free-transport flow, while the Hartree perturbation is conjugated by the free Schr\"odinger group, so that its Wigner transform is expressed in the free-transport frame. Throughout this section, $\Phi_t(x,\xi):=(x-t\xi,\xi)$ denotes the free-transport map.

\subsection{Vlasov equation in the moving frame} The Vlasov perturbation is expressed in free-transport variables by introducing
\begin{align*}
    g(t,z,\xi):=h(t,z+t\xi,\xi),
\end{align*}where $h$ denotes the perturbation evolving according to \eqref{NV}; equivalently, $h(t)=g(t)\circ\Phi_t$, so that $\rho_h=\rho_{g\circ\Phi_t}$ and the force field is $\nabla V_g$, where
\begin{align}\label{selfconsistentpotential}
    V_f(t,x):=\br{K*_x\rho_{f(t)\circ\Phi_t}}(x).
\end{align}Then $g$ satisfies
\begin{align}
    \begin{mycases}
    \p_tg=L_\mu[g]+N[g,g],\label{equationforg(t)}\\
        g(0)=\hin,
    \end{mycases}
\end{align}where the operators $L\colon (\mu,g)\mapsto L_\mu[g]$ and $N\colon (g_1,g_2)\mapsto N[g_1,g_2]$ are defined by\begin{align*}
    L_\mu[g](t,z,\xi)&:=\nabla V_g(t,z+t\xi)\cdot\nabla_\xi\mu(\xi),\\
    N[g_1,g_2](t,z,\xi)&:=\nabla V_{g_1}(t,z+t\xi)\cdot\br{\nabla_\xi-t\nabla_z}g_2(t,z,\xi).
\end{align*}

The moving-frame analysis is carried out in the Fourier variables $(k,\eta)$, in which the free flow acts by explicit shears; the following elementary identities record this action.
\begin{lemma}[Fourier identities in the frame]\label{framefourierlemma}
Let $F,f\in L^1(\R^3\times\R^3)$ and let $u$ satisfy $\widehat u\in L^1(\R^3)$. Then, for every $t\in\R$:
\begin{enumerate}[label=(\roman*)]
    \item \label{shearidentity} $\mathcal F_{z,\xi}\sbr{F(z+t\xi,\xi)}(k,\eta)=\widehat F(k,\eta-kt)$.
    \item \label{framedensityidentity} $\widehat{\rho_{f\circ\Phi_t}}(k)=\widehat f(k,k t)$.
    \item \label{frameproductidentity} $\displaystyle\mathcal F_{z,\xi}\sbr{u(z+t\xi)f(z,\xi)}(k,\eta)=(2\pi)^{-3}\int_{\R^3}\widehat u(\ell)\widehat f(k-\ell,\eta-t\ell)\,d\ell$.
\end{enumerate}
\end{lemma}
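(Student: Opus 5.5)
All three identities follow from Fubini and a linear change of variables under the stated integrability, using the Fourier convention \eqref{FTconvention}. I will prove \ref{shearidentity} first and then obtain \ref{framedensityidentity} and \ref{frameproductidentity} from it, or from the same substitution.

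For \ref{shearidentity}, write
\[
\mathcal F_{z,\xi}\sbr{F(z+t\xi,\xi)}(k,\eta)=\int\!\!\int e^{-ik\cdot z}e^{-i\eta\cdot\xi}F(z+t\xi,\xi)\,dz\,d\xi .
\]
Substitute $x=z+t\xi$ at fixed $\xi$. This is measure preserving, so the integral stays absolutely convergent because $F\in L^1$. The phase becomes $e^{-ik\cdot x}e^{-i(\eta-tk)\cdot\xi}$, and the integral equals $\widehat F(k,\eta-kt)$.

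For \ref{framedensityidentity}, note that $f\circ\Phi_t(x,\xi)=f(x-t\xi,\xi)$, so $\rho_{f\circ\Phi_t}(x)=\int f(x-t\xi,\xi)\,d\xi$, which lies in $L^1$ by Fubini. Taking its Fourier transform and substituting $z=x-t\xi$ gives
\[
\int\!\!\int e^{-ik\cdot(z+t\xi)}f(z,\xi)\,dz\,d\xi=\widehat f(k,kt).
\]
This is the case $\eta=0$ of the computation in \ref{shearidentity}, with $t$ replaced by $-t$.

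For \ref{frameproductidentity}, since $\widehat u\in L^1$, the function $u$ is bounded and continuous and has the representation
\[
u(y)=(2\pi)^{-3}\int\widehat u(\ell)e^{i\ell\cdot y}\,d\ell .
\]
The product $u(z+t\xi)f(z,\xi)$ is therefore in $L^1$. Inserting the representation gives
\[
\mathcal F_{z,\xi}\sbr{u(z+t\xi)f(z,\xi)}(k,\eta)=(2\pi)^{-3}\int\!\!\int\!\!\int \widehat u(\ell)\,e^{i\ell\cdot(z+t\xi)}e^{-ik\cdot z}e^{-i\eta\cdot\xi}f(z,\xi)\,d\ell\,dz\,d\xi .
\]
The integrand is dominated by $\abs{\widehat u(\ell)}\abs{f(z,\xi)}\in L^1(d\ell\,dz\,d\xi)$, so Fubini lets me perform the $(z,\xi)$ integral first. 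That inner integral has phase $e^{-i(k-\ell)\cdot z}e^{-i(\eta-t\ell)\cdot\xi}$, so it equals $\widehat f(k-\ell,\eta-t\ell)$, which is the claimed formula.

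None of these steps is really an obstacle. The only care needed is in the Fubini justifications, and the stated hypotheses ($F,f\in L^1$ and $\widehat u\in L^1$) are exactly what those justifications require.
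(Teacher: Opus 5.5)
Your proposal is correct and follows the paper's proof essentially step for step. You use the substitution $x=z+t\xi$ for (i), the substitution $z=x-t\xi$ for (ii), and Fourier inversion of $u$ followed by interchanging the integrals for (iii). You additionally make the Fubini justifications explicit, via domination by $\abs{\widehat u(\ell)}\abs{f(z,\xi)}$, and observe that (ii) is the $\eta=0$ case of (i) with $t\mapsto -t$; the paper leaves both points implicit.
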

\begin{proof}
For~\ref{shearidentity}, substitute $x=z+t\xi$ in~\eqref{FTconvention}, so that $e^{-ik\cdot z-i\eta\cdot\xi}=e^{-ik\cdot x-i(\eta-kt)\cdot\xi}$. For~\ref{framedensityidentity}, the substitution $z=x-t\xi$ gives $\widehat{\rho_{f\circ\Phi_t}}(k)=\int_{\R^3}\int_{\R^3}f(z,\xi)e^{-ik\cdot(z+t\xi)}\,dz d\xi=\widehat f(k,k t)$. For~\ref{frameproductidentity}, expand $u$ by Fourier inversion and interchange the integrals: each mode contributes $\mathcal F_{z,\xi}\sbr{e^{i\ell\cdot(z+t\xi)}f}(k,\eta)=\widehat f(k-\ell,\eta-t\ell)$.
\end{proof}
Item~\ref{shearidentity} is the phase-mixing mechanism in Fourier variables: composition with the free flow shears the velocity frequency along $kt$. Item~\ref{framedensityidentity} expresses the density of a framed solution as the trace of its Fourier transform on the moving line $\eta=k t$, and implies that the Fourier transform of~\eqref{selfconsistentpotential} is \begin{align}
    \widehat{V_f}(t,k)=\widehat K(k)\widehat f(t,k,k t).\label{Vfourier}
\end{align}

The Fourier transform of $g$ satisfies the equation\begin{align*}
    \p_t\widehat g=\mathcal L_\mu[g]+\mathcal N[g,g]
\end{align*}
where the operators $\mathcal L\colon (\mu,g)\mapsto \mathcal L_\mu[g]$ and $\mathcal N\colon (g_1,g_2)\mapsto \mathcal N[g_1,g_2]$ are defined by\begin{align*}
    \mathcal L_\mu[g](t,k,\eta):=\mathcal F_{z,\xi}\sbr{L_\mu[g](t,z,\xi)}(k,\eta)\qaq \mathcal N[g_1,g_2](t,k,\eta):=\mathcal F_{z,\xi}\sbr{N[g_1,g_2](t,z,\xi)}(k,\eta)
\end{align*}and satisfy
\begin{align}
   \mathcal L_\mu[g](t,k,\eta)&=-\widehat K(k)\widehat g(t,k,kt) k\cdot (\eta-kt)\widehat{\mu}(\eta-kt),\label{mathcalL}\\
    \mathcal N[g_1,g_2](t,k,\eta)&=-(2\pi)^{-3}\int_{\R^3} \widehat K(\ell)\widehat {g_1}(t,\ell,\ell t)\ell\cdot (\eta-kt)\widehat {g_2}(t,k-\ell,\eta-t\ell)\,d\ell\label{mathcalN}.
\end{align}
Indeed, the linear term has the form $F(z+t\xi,\xi)$ with $F(x,\xi)=\nabla V_g(t,x)\cdot\nabla_\xi\mu(\xi)$, whose Fourier transform factorises as $\widehat F(k,\eta)=-k\cdot \eta\widehat{V_g}(t,k)\widehat\mu(\eta)$, so \eqref{mathcalL} follows from Lemma~\ref{framefourierlemma}~\ref{shearidentity} and \eqref{Vfourier}. For the nonlinear term, \eqref{mathcalN} follows from Lemma~\ref{framefourierlemma}~\ref{frameproductidentity} applied componentwise with $u=\p_jV_{g_1}$ and $f=(\p_{\xi_j}-t\p_{z_j})g_2$, since $(\eta-t\ell)-t(k-\ell)=\eta-kt$.

\subsection{Hartree equation in the moving frame}An analogous change of variables is performed for the Hartree equation at the level of density matrices, and the associated Wigner formulation is recorded.

For the Hartree perturbation equation \eqref{NH}, the free Schr\"odinger evolution is factored out by introducing $P^\hbar(t):=e^{-i\frac{\hbar}{2}t\Delta}Q^\hbar(t)e^{i\frac{\hbar}{2}t\Delta}$, with $P^\hbar(0)=\Qin^\hbar$. The conjugation absorbs the kinetic commutator in \eqref{NH}, so that
\begin{align}\label{framedvonneumann}
    i\hbar\p_tP^\hbar(t)=e^{-i\frac{\hbar}{2}t\Delta}\sbr{K*_x\rho_{Q^\hbar}(t),Q^\hbar(t)}e^{i\frac{\hbar}{2}t\Delta}+e^{-i\frac{\hbar}{2}t\Delta}\sbr{K*_x\rho_{Q^\hbar}(t),\gamma^\hbar_\mu}e^{i\frac{\hbar}{2}t\Delta}.
\end{align}
The passage to the Wigner formulation requires only the following commutator identity.

\begin{lemma}[Wigner transform of a potential commutator]\label{potentialcommutatorlemma}
Let $V$ satisfy $\widehat V\in L^1(\R^3)$ and let $\gamma\in\mathcal L^2$. Then $\sbr{V,\gamma}\in\mathcal L^2$ and
\begin{align}\label{Wignercommutatoridentity}
    W^{\hbar}\sbr{V,\gamma}(x,\xi)=(2\pi)^{-3}\int_{\R^3}\widehat V(\ell)e^{i\ell\cdot x}\br{W^{\hbar}[\gamma]\br{x,\xi-\tfrac{\hbar}{2}\ell}-W^{\hbar}[\gamma]\br{x,\xi+\tfrac{\hbar}{2}\ell}}\,d\ell
\end{align}
in $L^2(\R^3\times\R^3)$. The same identity holds for the homogeneous state $\gamma^\hbar_\mu$ when $\mu,\widehat\mu\in L^1(\R^3)$, with $W^{\hbar}[\gamma^\hbar_\mu]=\mu$.
\end{lemma}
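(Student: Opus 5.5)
I will prove the identity by direct computation on integral kernels: first for nice $\gamma$, then for general $\gamma\in\mathcal L^2$ by density and an isometry bound.

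\emph{Step 1: the kernel computation.} The operator $[V,\gamma]$ has kernel $(V(x)-V(y))\gamma(x,y)$. In Definition~\ref{Wignerdefinition} this evaluates at $(x+\tfrac y2,x-\tfrac y2)$, giving
\[
W^{\hbar}[V,\gamma](x,\xi)=(2\pi\hbar)^{-3}\int_{\R^3} e^{-i\xi\cdot y/\hbar}\bigl(V(x+\tfrac y2)-V(x-\tfrac y2)\bigr)\gamma(x+\tfrac y2,x-\tfrac y2)\,dy .
\]
Since $\widehat V\in L^1$, I will expand $V$ by Fourier inversion, $V(x\pm\tfrac y2)=(2\pi)^{-3}\int\widehat V(\ell)e^{i\ell\cdot x}e^{\pm i\ell\cdot y/2}\,d\ell$. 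For $\gamma$ with, say, a Schwartz kernel, Fubini lets me interchange the $\ell$ and $y$ integrals. The factor $e^{\pm i\ell\cdot y/2}$ combines with $e^{-i\xi\cdot y/\hbar}$ into $e^{-i(\xi\mp\hbar\ell/2)\cdot y/\hbar}$. The inner integral is therefore $W^{\hbar}[\gamma](x,\xi\mp\tfrac\hbar2\ell)$, which yields~\eqref{Wignercommutatoridentity} pointwise.

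\emph{Step 2: extension to $\gamma\in\mathcal L^2$.} Call the right-hand side $T_V W$. Each map $W\mapsto e^{i\ell\cdot x}W(x,\xi\mp\tfrac\hbar2\ell)$ is an isometry of $L^2(\R^6)$. By Minkowski's integral inequality,
\[
\norm{T_V W}_{L^2}\le 2(2\pi)^{-3}\norm{\widehat V}_{L^1}\norm{W}_{L^2}.
\]
On the operator side, $\norm{V}_{L^\infty}\le(2\pi)^{-3}\norm{\widehat V}_{L^1}$, so $\norm{[V,\gamma]}_{\mathrm{HS}}\le 2\norm{V}_\infty\norm{\gamma}_{\mathrm{HS}}$. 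Remark~\ref{HSnormremark} then gives $[V,\gamma]\in\mathcal L^2$. Operators with Schwartz kernels are dense in Hilbert--Schmidt, and $W^\hbar$ is a rescaled isometry. Both sides are therefore continuous in $\gamma$ for the $\mathcal L^2$ norm, and the identity passes to the limit in $L^2(\R^3\times\R^3)$.

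\emph{Step 3: the homogeneous state.} This is the only delicate point, since $\gamma^\hbar_\mu\notin\mathcal L^2$ and the density argument does not apply. The kernel~\eqref{intkernelofsteadystate} is bounded and continuous because $\mu\in L^1$. I will compute directly that
\[
[V,\gamma^\hbar_\mu](x+\tfrac y2,x-\tfrac y2)=\int_{\R^3}\bigl(V(x+\tfrac y2)-V(x-\tfrac y2)\bigr)e^{i\zeta\cdot y/\hbar}\mu(\zeta)\,d\zeta .
\]
Expanding $V$ as before gives, after shifting $\zeta$, the function
\[
(2\pi)^{-3}\int_{\R^3}\widehat V(\ell)e^{i\ell\cdot x}\int_{\R^3}e^{i\zeta\cdot y/\hbar}\bigl(\mu(\zeta-\tfrac\hbar2\ell)-\mu(\zeta+\tfrac\hbar2\ell)\bigr)\,d\zeta\,d\ell .
\]
Here Fubini is justified by $\widehat V,\mu\in L^1$. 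Applying the $y$-transform of Definition~\ref{Wignerdefinition} and using Fourier inversion then recovers~\eqref{Wignercommutatoridentity} with $W^\hbar[\gamma^\hbar_\mu]=\mu$, as in Lemma~\ref{operatorsWigner}~\ref{HomoegeneousstatesandWignertransform}. The hypothesis $\widehat\mu\in L^1$ ensures this inversion holds pointwise rather than only in a distributional sense. It also makes the right-hand side a well-defined function, being the product of $e^{i\ell\cdot x}$ and bounded differences integrated against $\widehat V\in L^1$.

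I expect this last justification of interchanges and inversion for the non-Hilbert--Schmidt steady state to be the main technical obstacle. If the pointwise argument becomes awkward, I would fall back on interpreting the identity in the sense of tempered distributions, where both sides are easily compared.
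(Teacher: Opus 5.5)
Your proposal is correct and follows the paper's proof essentially step for step. You do the Schwartz-kernel computation via Fourier inversion of $V$, extend to $\mathcal L^2$ using boundedness of $V$, Minkowski's inequality and density, and handle $\gamma^\hbar_\mu$ by direct computation using $\mu,\widehat\mu\in L^1$. The only difference is cosmetic, in the last step: the paper observes that the kernel equals $\widehat\mu(-(x-y)/\hbar)$, which is integrable in $x-y$, and reruns the first computation, whereas you keep the $\zeta$-integral and shift it. Note that the integrability of $y\mapsto\widehat\mu(-y/\hbar)$ is exactly what justifies moving the $y$-integral inside the $\ell$-integral in your version too.
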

\begin{proof}
Suppose first that $\gamma$ has a Schwartz kernel. The integral kernels of $V\gamma$ and $\gamma V$ are $V(x_1)\gamma(x_1,x_2)$ and $\gamma(x_1,x_2)V(x_2)$, so Definition~\ref{Wignerdefinition} gives
\begin{align*}
    W^{\hbar}\sbr{V,\gamma}(x,\xi)=(2\pi\hbar)^{-3}\int_{\R^3}e^{-i\xi\cdot y/\hbar}\br{V\br{x+\tfrac{y}{2}}-V\br{x-\tfrac{y}{2}}}\gamma\br{x+\tfrac{y}{2},x-\tfrac{y}{2}}\,dy.
\end{align*}
Expanding $V$ by Fourier inversion and combining the phases through
\begin{align*}
    e^{-i\xi\cdot y/\hbar}e^{\pm i\ell\cdot y/2}=e^{-i(\xi\mp\frac{\hbar}{2}\ell)\cdot y/\hbar},
\end{align*}
the two $y$-integrals are, by Definition~\ref{Wignerdefinition}, the Wigner transforms $W^{\hbar}[\gamma]\br{x,\xi\mp\tfrac{\hbar}{2}\ell}$, and \eqref{Wignercommutatoridentity} follows upon interchanging the $\ell$ and $y$ integrals.

Since $\widehat V\in L^1$, the multiplication operator $V$ is bounded, and hence $\gamma\mapsto\sbr{V,\gamma}$ is continuous on $\mathcal L^2$. The right-hand side of~\eqref{Wignercommutatoridentity} is also continuous in $W^{\hbar}[\gamma]\in L^2$, since translations and modulations preserve the $L^2$ norm and, by Minkowski's integral inequality,
\begin{align*}
    \norm{\int_{\R^3}\widehat V(\ell)e^{i\ell\cdot x}\br{W^{\hbar}[\gamma]\br{x,\xi-\tfrac{\hbar}{2}\ell}-W^{\hbar}[\gamma]\br{x,\xi+\tfrac{\hbar}{2}\ell}}\,d\ell}_{L^2_{x,\xi}}\leq2\norm{\widehat V}_{L^1}\norm{W^{\hbar}[\gamma]}_{L^2}.
\end{align*}
The identity therefore extends from Schwartz kernels to every $\gamma\in\mathcal L^2$ by density. 

For $\gamma=\gamma^\hbar_\mu$, the kernel~\eqref{intkernelofsteadystate} equals $\widehat\mu\br{-(x-y)/\hbar}$ and is therefore integrable in $x-y$, so the same computation applies, the $y$-integrals evaluating to $\mu\br{\xi\mp\tfrac{\hbar}{2}\ell}$ by Fourier inversion and all integrals converging absolutely since $\norm{\mu}_{L^\infty}\leq(2\pi)^{-3}\norm{\widehat\mu}_{L^1}$.
\end{proof}

Define $g^\hbar(t):=W^{\hbar}[P^\hbar(t)]$. By Lemma~\ref{operatorsWigner}~\ref{HartreetoFreeTransport},
\begin{align}\label{WignerQfromg}
    W^{\hbar}[Q^\hbar(t)](x,\xi)=g^\hbar(t,x-t\xi,\xi),
\end{align}
and hence, by the density identity of Lemma~\ref{operatorsWigner}~\ref{densityproperty}, the multiplication potential in \eqref{framedvonneumann} is $K*_x\rho_{Q^\hbar(t)}=K*_x\rho_{g^\hbar(t)\circ\Phi_t}=V_{g^\hbar}(t)$.

The Wigner transform of \eqref{framedvonneumann} is now computed. First, Lemma~\ref{operatorsWigner}~\ref{HartreetoFreeTransport} removes the conjugations: for either commutator,
\begin{align*}
    W^{\hbar}\sbr{e^{-i\frac{\hbar}{2}t\Delta}\sbr{V_{g^\hbar}(t),\gamma}e^{i\frac{\hbar}{2}t\Delta}}(z,\xi)=W^{\hbar}\sbr{V_{g^\hbar}(t),\gamma}(z+t\xi,\xi),
    \qquad\gamma\in\set{Q^\hbar(t),\gamma^\hbar_\mu}.
\end{align*}
The right-hand sides are given by Lemma~\ref{potentialcommutatorlemma} at $x=z+t\xi$: the shifted Wigner transforms there are $\mu\br{\xi\mp\tfrac{\hbar}{2}\ell}$ for the homogeneous state, while for $Q^\hbar$, by \eqref{WignerQfromg}, the frame shifts both arguments,
\begin{align*}
    W^{\hbar}[Q^\hbar(t)]\br{z+t\xi,\xi\mp\tfrac{\hbar}{2}\ell}=g^\hbar\br{t,z\pm\tfrac{\hbar t}{2}\ell,\xi\mp\tfrac{\hbar}{2}\ell}.
\end{align*}
It follows, after dividing by $i\hbar$ and inserting the Fourier representation~\eqref{Vfourier} of the self-consistent field, that $g^\hbar$ satisfies
\begin{align}
\label{eqforg^hbar(t)}\begin{mycases}
    \p_tg^\hbar=L^\hbar_\mu[g^\hbar]+N^\hbar[g^\hbar,g^\hbar],\\
g^\hbar(0)=W^\hbar[\Qin^\hbar],
\end{mycases}
\end{align}
where the operators $L^\hbar\colon (\mu,g)\mapsto L^\hbar_\mu[g]$ and $N^\hbar\colon (g_1,g_2)\mapsto N^\hbar[g_1,g_2]$ are defined by
\begin{align*}
    L^\hbar_\mu[g](t,z,\xi)&:=\frac{(2\pi)^{-3}}{i\hbar}\int_{\R^3}\widehat K(\ell)\widehat g(t,\ell,\ell t)e^{i\ell\cdot(z+t\xi)}\br{\mu\br{\xi-\tfrac{\hbar}{2}\ell}-\mu\br{\xi+\tfrac{\hbar}{2}\ell}}\,d\ell,\\
    N^\hbar[g_1,g_2](t,z,\xi)&:=\frac{(2\pi)^{-3}}{i\hbar}\int_{\R^3}\widehat K(\ell)\widehat {g_1}(t,\ell,\ell t)e^{i\ell\cdot(z+t\xi)}\br{g_2\br{t,z+\tfrac{\hbar t}{2}\ell,\xi-\tfrac{\hbar}{2}\ell}-g_2\br{t,z-\tfrac{\hbar t}{2}\ell,\xi+\tfrac{\hbar}{2}\ell}}\,d\ell.
\end{align*}

The Fourier transform of $g^\hbar$ satisfies the equation
\begin{align*}
    \p_t\widehat{g^\hbar}(t,k,\eta)=\mathcal L^\hbar_\mu[g^\hbar](t,k,\eta)+\mathcal N^\hbar[g^\hbar,g^\hbar](t,k,\eta),
\end{align*}where the operators 
$\mathcal L^\hbar\colon(\mu,g)\mapsto \mathcal L^\hbar_{\mu}[g]$ and $\mathcal N^\hbar\colon (g_1,g_2)\mapsto \mathcal N^\hbar[g_1,g_2]$ are defined by\begin{align*}
    \mathcal L^\hbar_\mu[g](t,k,\eta):=\mathcal F_{z,\xi}\sbr{L^\hbar_\mu[g](t,z,\xi)}(k,\eta)\qaq \mathcal N^\hbar[g_1,g_2](t,k,\eta):=\mathcal F_{z,\xi}\sbr{N^\hbar[g_1,g_2](t,z,\xi)}(k,\eta)
\end{align*}and satisfy
\begin{align}
    \mathcal L^\hbar_{\mu}[g](t,k,\eta)&=-2\hbar^{-1}\widehat K(k)\widehat {g}(t,k,kt)\sin\br{\mytextfrac{\hbar}{2}k\cdot(\eta-kt)}\widehat \mu(\eta-kt),\label{mathcalLhbar}\\
    \mathcal N^\hbar[g_1,g_2](t,k,\eta)&=-2\hbar^{-1}(2\pi)^{-3}\int_{\R^3} \widehat{K}(\ell )\widehat{g_1}(t,\ell,\ell t)\sin\br{\mytextfrac{\hbar}{2}\ell \cdot(\eta-kt)}\widehat{g_2}(t,k-\ell ,\eta-\ell t)\,d\ell.\label{mathcalNhbar}
\end{align}
Indeed, the modulation and translations combine into
\begin{align*}
    \mathcal F_{z,\xi}\sbr{e^{i\ell\cdot(z+t\xi)}f\br{z\pm\tfrac{\hbar t}{2}\ell,\xi\mp\tfrac{\hbar}{2}\ell}}(k,\eta)=e^{\mp i\frac{\hbar}{2}\ell\cdot(\eta-kt)}\widehat f(k-\ell,\eta-t\ell),
\end{align*}
which, applied with $f=g_2(t)$ and combined with $e^{-i\theta}-e^{i\theta}=-2i\sin\theta$ at $\theta=\tfrac{\hbar}{2}\ell\cdot(\eta-kt)$, yields \eqref{mathcalNhbar}. In $L^\hbar_\mu$, the bracket has no $z$-dependence, so the Fourier transform in $z$ collapses the $\ell$-integral at $\ell=k$ and cancels the factor $(2\pi)^{-3}$; the same computation in the $\xi$ variable alone then yields \eqref{mathcalLhbar}.

As $\hbar\to0$, the multiplier $2\hbar^{-1}\sin\br{\mytextfrac{\hbar}{2}q\cdot(\eta-kt)}$ converges to $q\cdot(\eta-kt)$, with $q=k$ in the linear term and $q=\ell$ in the nonlinear term, so that \eqref{mathcalLhbar} and \eqref{mathcalNhbar} formally recover the Vlasov operators \eqref{mathcalL} and \eqref{mathcalN}; the quantitative version of this convergence is the content of Section~\ref{quantumerrorsection}.

\subsection{Difference evolution}The difference between the classical and quantum dynamics is expressed in terms of linear and nonlinear error operators. 

Define $R^L\colon (\mu,g)\mapsto R^L_\mu[g]$ and $R^N\colon (g_1,g_2)\mapsto R^N[g_1,g_2]$ by
\begin{align}
    R^L_\mu[g]:=L_\mu[g]-L^\hbar_\mu[g]\qaq R^N[g_1,g_2]:=N[g_1,g_2]-N^\hbar[g_1,g_2],\label{defnsofquantum}
\end{align}
which measure the deviation of solutions under the classical dynamics from the quantum dynamics.

The difference in evolution of the solution $g$ to the Vlasov equation~\eqref{equationforg(t)} with the Hartree equation~\eqref{eqforg^hbar(t)} follows
\begin{align*}
    \p_t\br{g(t)-g^\hbar(t)}=L_\mu[g]-L^\hbar_\mu[g^\hbar]+N[g,g]-N^\hbar[g^\hbar,g^\hbar].
\end{align*}
Using the linearity of $L_\mu$, $L^\hbar_\mu$ and $R_\mu^L$ and the bilinearity of $N$, $N^\hbar$ and $R^N$, two natural decompositions arise: \begin{align}\label{Twoframes}
    \p_t\br{g(t)-g^\hbar(t)}=\begin{mycases}
        L_\mu[g-g^\hbar](t)+ N[g,g-g^\hbar](t)+ N[g-g^\hbar,g^\hbar](t)+ R^{L}_\mu[g^\hbar](t)+ R^{N}[g^\hbar,g^\hbar](t),\\
        L^\hbar_\mu[g-g^\hbar](t)+N^{\hbar}[g^\hbar,g-g^\hbar](t)+N^\hbar[g-g^\hbar,g](t)+ R^{ L}_\mu[g](t)+R^{ N}[g,g](t).
    \end{mycases} 
\end{align}The first form places the semiclassical remainders on $g^\hbar$ and is used in case~\ref{amaintheorem1}, where higher regularity is assumed for the Hartree solution. The second places them on $g$ and is used in case~\ref{bmaintheorem1}, where higher regularity is assumed for the Vlasov solution.

\section{Energy and semiclassical error estimates}\label{energyestimatessection}
Energy estimates are derived in weighted Sobolev spaces for the operators appearing in the moving-frame equations~\eqref{equationforg(t)} and~\eqref{eqforg^hbar(t)}, and the error operators encoding the semiclassical discrepancy~\eqref{defnsofquantum}.

\subsection{Fourier--Sobolev preliminaries}Auxiliary Fourier--Sobolev estimates adapted to the free-transport frame are collected. Throughout, $A\lesssim B$ means $A\leq CB$ for some $C>0$ independent of $\hbar>0$ and $t\geq0$, and $A\lesssim_{\sigma,s,K}B$ additionally indicates that $C$ depends only on the subscripted parameters; dependence on $K$ is always through the finite seminorm $[K]_1$ or $[K]_2$ assumed in the statement at hand.

The weighted space $H^\sigma_2$ is chosen to ensure control of Fourier-side Sobolev embeddings, as quantified in the following lemma.
\begin{lemma}\label{boundonenergy}
    Let $\sigma\geq0$ and let $w\in H^\sigma_2(\R^3\times\R^3)$. Then for any $t\geq0$,
    \begin{align*}\norm{\jb{k,kt}^\sigma \widehat{w}(k,kt)}_{L^2_k}&\lesssim_\sigma\norm{w}_{H^\sigma_2}.
    \end{align*}
\end{lemma}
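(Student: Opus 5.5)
The bound is a trace estimate: we restrict $\widehat w$ to the three-dimensional plane $\{(k,kt)\}$ inside $\R^6$. We control it through velocity moments, since $\xi^\alpha$ weights become $\eta$-derivatives on the Fourier side. The plan is a one-dimensional-in-$\eta$ Sobolev embedding, applied at each fixed $k$, using the two available $\eta$-derivatives of $\widehat w$.

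Fix $t\ge0$ and write $F(k,\eta):=\jb{k,\eta}^\sigma\widehat w(k,\eta)$. Since $\jb{k,kt}^\sigma\widehat w(k,kt)=F(k,kt)$, it suffices to show $\int|F(k,kt)|^2\,dk\lesssim\norm{w}_{H^\sigma_2}^2$. For each fixed $k$ we use the Sobolev embedding $H^2(\R^3_\eta)\hookrightarrow L^\infty(\R^3_\eta)$; this holds since $2>3/2$, and it is exactly where the index $M=2$ is needed. It gives
\[
|F(k,kt)|^2\le\sup_\eta|F(k,\eta)|^2\lesssim\sum_{|\beta|\le2}\norm{\p_\eta^\beta F(k,\cdot)}_{L^2_\eta}^2 .
\]
Integrating in $k$ gives $\sum_{|\beta|\le2}\norm{\p_\eta^\beta F}_{L^2_{k,\eta}}^2$, which is uniform in $t$ because the supremum discards the location $\eta=kt$.

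It remains to bound $\p_\eta^\beta F$ by $\norm{w}_{H^\sigma_2}$, using the Leibniz rule
\[
\p_\eta^\beta F=\sum_{\beta'\le\beta}c_{\beta\beta'}\,\p_\eta^{\beta-\beta'}\jb{k,\eta}^\sigma\,\p_\eta^{\beta'}\widehat w .
\]
Two facts enter here. First, $\p_\eta^{\beta'}\widehat w=\widehat{(-i\xi)^{\beta'}w}$. Second, $|\p_\eta^{\gamma}\jb{k,\eta}^\sigma|\lesssim_\sigma\jb{k,\eta}^{\sigma-|\gamma|}\le\jb{k,\eta}^\sigma$, which is a symbol estimate of the type in Lemma~\ref{japanesebracketlemma}. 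Each term is therefore bounded pointwise by $\jb{k,\eta}^\sigma|\widehat{\xi^{\beta'}w}|$ with $|\beta'|\le2$. By Plancherel (constants $(2\pi)^{6}$ absorbed) its $L^2$ norm is controlled by $\norm{\jb{\nabla_x,\nabla_\xi}^\sigma(\xi^{\beta'}w)}_{L^2}\le\norm{w}_{H^\sigma_2}$.

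The only mild obstacle is justification. We need the trace $\widehat w(k,kt)$ to be well defined for a.e.\ $k$, and the pointwise-in-$k$ embedding to be legitimate. We handle this by first proving the estimate for Schwartz $w$, where everything is continuous. For general $w$ we define the restriction by density: the estimate shows the trace map extends continuously from the Schwartz class to $H^\sigma_2$. The symbol bound for derivatives of $\jb{k,\eta}^\sigma$ is routine, valid for all real $\sigma$, and uniform in $k$.
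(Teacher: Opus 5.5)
Your proof is correct and is essentially the paper's argument: fix $k$, apply $H^2(\R^3_\eta)\hookrightarrow L^\infty(\R^3_\eta)$ to $\jb{k,\cdot}^\sigma\widehat w(k,\cdot)$, distribute the $\eta$-derivatives by the Leibniz rule with the symbol bound of Lemma~\ref{japanesebracketlemma}~\ref{JBsymbol}, and conclude by Plancherel using $D^\beta_\eta\widehat w=\mathcal F[(-i\xi)^\beta w]$. The differences are cosmetic: your density justification of the trace is a detail the paper leaves implicit, and your opening phrase ``one-dimensional-in-$\eta$'' should read ``in the $\eta$ variable only'', since the embedding you actually use is the three-dimensional one.
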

\begin{proof}
Fix $k\in\R^3$. The Sobolev embedding $H^2(\R^3)\hookrightarrow L^\infty(\R^3)$ in the $\eta$ variable, together with the Leibniz rule and the symbol bound of Lemma~\ref{japanesebracketlemma}~\ref{JBsymbol}, gives
\begin{align*}
    \abs{\jb{k,kt}^\sigma\widehat w(k,kt)}\leq\sup_\eta\abs{\jb{k,\eta}^\sigma \widehat{w}(k,\eta)}\lesssim_\sigma\sum_{\aal\leq 2}\norm{\jb{k,\cdot}^{\sigma}D^\alpha_\eta\widehat w(k,\cdot)}_{L^2_{\eta}}.
\end{align*}
Taking the $L^2_k$ norm and applying Plancherel's theorem, with $D^\alpha_\eta\widehat w=\mathcal F\sbr{(-i\xi)^\alpha w}$, bounds the left-hand side by a constant multiple of $\sum_{\aal\leq2}\norm{\xi^\alpha w}_{H^\sigma}\lesssim\norm{w}_{H^\sigma_2}$.
\end{proof}

\begin{lemma}Let $s>3/2$. \label{importantlemma}
\begin{enumerate}[label=(\roman*)]
    \item \label{importantlemmag_1H^s_2g_2L^2}Suppose $w_1\in H^s_2(\R^3\times\R^3)$ and $w_2\in L^2(\R^3\times\R^3)$. Then for any $t\geq0$,
    \begin{align*}
        \norm{\int_{\R^3} \abs{\widehat{w_1}(\ell,\ell t)}\abs{\widehat{w_2}(k-\ell,\eta-\ell t)}\,d\ell}_{L^2_{k,\eta}}\lesssim_s\norm{w_1}_{H^s_2}\norm{w_2}_{L^2}.
    \end{align*}
    \item \label{importantlemmag_1H^0_2g_2H^s}Suppose $w_1\in H^0_2(\R^3\times\R^3)$ and $w_2\in H^s(\R^3\times\R^3)$. Then for any $t\geq0$,
    \begin{align*}
        \norm{\int_{\R^3} \abs{\widehat{w_1}(\ell,\ell t)}\abs{\widehat{w_2}(k-\ell,\eta-\ell t)}\,d\ell}_{L^2_{k,\eta}}\lesssim_s\norm{w_1}_{H^0_2}\norm{w_2}_{H^s}.
    \end{align*}
\end{enumerate}
\end{lemma}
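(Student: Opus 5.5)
Both parts are Young-type convolution estimates in which the $\ell$-variable is integrated against a trace of $\widehat{w_1}$ on the moving line $\eta=\ell t$. The plan is to apply Cauchy--Schwarz in $\ell$ with a weight that makes the $\widehat{w_1}$ factor square-integrable, and then integrate in $(k,\eta)$ using Fubini and translation invariance. The key simplification is that, for fixed $\ell$, the map $(k,\eta)\mapsto(k-\ell,\eta-\ell t)$ is a measure-preserving translation of $\R^3\times\R^3$.

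For part \ref{importantlemmag_1H^s_2g_2L^2}, set $a(\ell):=\abs{\widehat{w_1}(\ell,\ell t)}$ and bound the integrand via Cauchy--Schwarz in the form $\int a(\ell)\abs{\widehat{w_2}(k-\ell,\eta-\ell t)}\,d\ell\le \br{\int \jb{\ell}^{-2s}d\ell}^{1/2}\br{\int \jb{\ell}^{2s}a(\ell)^2\abs{\widehat{w_2}(k-\ell,\eta-\ell t)}^2d\ell}^{1/2}$. The first factor is finite because $2s>3$. Squaring, integrating in $(k,\eta)$ and exchanging the order of integration, the inner $(k,\eta)$-integral of $\abs{\widehat{w_2}(k-\ell,\eta-\ell t)}^2$ equals $\norm{\widehat{w_2}}_{L^2}^2$ for each fixed $\ell$. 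What remains is $\norm{\jb{\ell}^s a}_{L^2_\ell}^2\norm{\widehat{w_2}}_{L^2}^2$. Since $\jb{\ell}\le\jb{\ell,\ell t}$, Lemma~\ref{boundonenergy} gives $\norm{\jb{\ell}^sa}_{L^2_\ell}\lesssim_s\norm{w_1}_{H^s_2}$, and Plancherel converts $\norm{\widehat{w_2}}_{L^2}$ into $\norm{w_2}_{L^2}$.

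For part \ref{importantlemmag_1H^0_2g_2H^s}, the weight is moved onto $w_2$, which is harder because it now has to be placed on a shifted argument. Write $b(k,\eta):=\jb{k,\eta}^s\abs{\widehat{w_2}(k,\eta)}$, so that the integrand equals $a(\ell)\,b(k-\ell,\eta-\ell t)\,\jb{k-\ell,\eta-\ell t}^{-s}$. I would first take the $L^2_\eta$ norm for fixed $k$ and apply Minkowski's inequality in $\ell$. Bounding $\jb{k-\ell,\eta-\ell t}^{-s}\le\jb{k-\ell}^{-s}$ and using translation invariance in $\eta$ gives
$$\norm{\cdot}_{L^2_\eta}(k)\le\int a(\ell)\jb{k-\ell}^{-s}B(k-\ell)\,d\ell, \qquad B(m):=\norm{b(m,\cdot)}_{L^2_\eta}.$$
Young's inequality in $k$ then bounds the $L^2_k$ norm by $\norm{a}_{L^2}\,\norm{\jb{\cdot}^{-s}B}_{L^1}$. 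By Cauchy--Schwarz, $\norm{\jb{\cdot}^{-s}B}_{L^1}\le\norm{\jb{\cdot}^{-s}}_{L^2}\norm{B}_{L^2}\lesssim_s\norm{w_2}_{H^s}$, again because $s>3/2$. Finally, Lemma~\ref{boundonenergy} with $\sigma=0$ gives $\norm{a}_{L^2}\lesssim\norm{w_1}_{H^0_2}$.

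The only delicate point is the order of operations in part \ref{importantlemmag_1H^0_2g_2H^s}: the $\eta$-integration has to be done first, at fixed $k$ and $\ell$, so that the shear $\eta-\ell t$ disappears by translation. This leaves only an ordinary convolution in $k$, whose decaying kernel $\jb{k-\ell}^{-s}$ is integrable in $L^2$ precisely because $s>3/2$. Done in the other order, the shear would couple $\ell$ and $\eta$ and the convolution structure in $k$ would be lost.
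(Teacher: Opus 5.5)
Your proposal is correct and follows essentially the same route as the paper. In part (i), doing Cauchy--Schwarz in $\ell$ with weight $\jb{\ell}^{\pm s}$ and then Tonelli is the same computation as the paper's Minkowski step followed by bounding $\int_{\R^3}\abs{\widehat{w_1}(\ell,\ell t)}\,d\ell$, with Lemma~\ref{boundonenergy} taking the place of the paper's direct Sobolev embedding. Part (ii) is exactly the paper's argument: Minkowski in $\eta$, Young in $k$, Cauchy--Schwarz in $k$ using $s>3/2$, and Lemma~\ref{boundonenergy} with $\sigma=0$.
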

\begin{proof}
    To prove~\ref{importantlemmag_1H^s_2g_2L^2}, Minkowski's integral inequality implies
    \begin{align*}
        \norm{\int_{\R^3} \abs{\widehat{w_1}(\ell,\ell t)}\abs{\widehat{w_2}(k-\ell,\eta-\ell t)}\,d\ell}_{L^2_{k,\eta}}\leq\norm{\widehat{w_2}}_{L^2}\int_{\R^3}\abs{\widehat{w_1}(\ell,\ell t)}\,d\ell.
    \end{align*}
The Sobolev embedding applied to the $\eta$ variable gives
\begin{align}
        \sup_{\eta}\abs{\widehat{w_1}(\ell,\eta)}\lesssim_s\sum_{\aal\leq 2}\br{\int_{\R^3}\abs{D^\alpha_\eta \widehat{w_1}(\ell,\eta)}^2\,d\eta}^{1/2}\label{Sobolevembedding}.
\end{align}
Using \eqref{Sobolevembedding} and Cauchy--Schwarz in $\ell$,
\begin{align*}
    \int_{\R^3}\abs{\widehat{w_1}(\ell,\ell t)}\,d\ell\lesssim_s\sum_{\aal\leq 2}\br{\int_{\R^3\times\R^3}\jb{\ell}^{2s}\abs{D^\alpha_\eta \widehat{w_1}(\ell,\eta)}^2\,d\eta d\ell}^{1/2}\br{\int_{\R^3}\jb{\ell}^{-2s}\,d\ell}^{1/2}\lesssim\norm{w_1}_{H^s_2}.
\end{align*}

To prove~\ref{importantlemmag_1H^0_2g_2H^s}, Minkowski's integral inequality applied in the $\eta$ variable, followed by Young's convolution inequality, implies
\begin{align*}
    \norm{\int_{\R^3} \abs{\widehat{w_1}(\ell,\ell t)}\abs{\widehat{w_2}(k-\ell,\eta-\ell t)}\,d\ell}_{L^2_{k,\eta}}&\leq \norm{\int_{\R^3} \abs{\widehat{w_1}(\ell,\ell t)}\norm{\widehat{w_2}(k-\ell,\eta)}_{L^2_\eta}\,d\ell}_{L^2_{k}}\\
    &\lesssim\norm{\widehat{w_1}(\ell,\ell t)}_{L^2_\ell}\norm{\widehat{w_2}(k,\eta)}_{L^1_kL^2_{\eta}}.
\end{align*}
Cauchy--Schwarz in $k$ then gives
\begin{align*}
    \norm{\widehat{w_2}(k,\eta)}_{L^1_kL^2_{\eta}}\lesssim\norm{\jb{k}^s\widehat{w_2}(k,\eta)}_{L^2_{k,\eta}}\lesssim_s\norm{w_2}_{H^s},
\end{align*}
and then Lemma~\ref{boundonenergy} concludes $\norm{\widehat{w_1}(\ell,\ell t)}_{L^2_\ell}\lesssim\norm{w_1}_{H^0_2}$.\end{proof}

\subsection{Sobolev norm estimates for linear and nonlinear operators}Sobolev estimates in $H^\sigma_2$ are developed for the linear and bilinear operators arising in the classical and quantum formulations.

As a first step, unweighted Sobolev estimates are derived.
\begin{lemma}[Unweighted $H^\sigma$ bounds]\label{unweightedlemma}
Let $\sigma\geq0$ and $s>3/2$. Assume $[K]_1<\infty$. 
\begin{enumerate}[label=(\roman*)]
    \item Suppose $\mu$ satisfies $\nabla\mu\in H^{\sigma}(\R^3)$ and $g(t)\in H^{\sigma}_2$. Then the linear operators $L$ and $L^\hbar$ satisfy, for any $t\geq0$,
    \begin{align*}
\norm{L_\mu[g](t)}_{H^\sigma}+\norm{L^\hbar_{\mu}[g](t)}_{H^\sigma}&\lesssim_{\sigma,K}\norm{\nabla\mu}_{H^\sigma}\norm{g(t)}_{H^\sigma_2}.
\end{align*}
\item Suppose $g_1(t)\in H^\sigma_2\cap H^s_2$ and $g_2(t)\in H^{s+1}\cap H^{\sigma+1}$. Then the nonlinear operators $N$ and $N^\hbar$ satisfy, for any $t\geq0$,
\begin{align*}
&\norm{N[g_1,g_2](t)}_{H^\sigma}+\norm{N^\hbar[g_1,g_2](t)}_{H^\sigma}\\
&\hspace{2cm}\lesssim_{\sigma,s,K}\jb{t}\br{\norm{g_1(t)}_{H^\sigma_2}\norm{g_2(t)}_{H^{s+1}}+\norm{g_1(t)}_{H^{s}_2}\norm{g_2(t)}_{H^{\sigma+1}}}
    \end{align*}
\end{enumerate}\end{lemma}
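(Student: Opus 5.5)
All four bounds are proved on the Fourier side, using Plancherel, the explicit formulas \eqref{mathcalL}, \eqref{mathcalLhbar}, \eqref{mathcalN}, \eqref{mathcalNhbar}, and the elementary inequality $\abs{2\hbar^{-1}\sin(\tfrac{\hbar}{2}q\cdot v)}\le\abs{q}\abs{v}$. This inequality lets the quantum multipliers be dominated pointwise by the classical ones. Every estimate for $L^\hbar_\mu$ and $N^\hbar$ therefore reduces to the estimate for $L_\mu$ and $N$ with absolute values inside the integrals, and the two operators in each part can be treated simultaneously.

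\emph{Linear part.} By \eqref{mathcalL}, \eqref{mathcalLhbar} and the sine bound,
\[
\jb{k,\eta}^\sigma\abs{\mathcal L_\mu[g]}+\jb{k,\eta}^\sigma\abs{\mathcal L^\hbar_\mu[g]}\le 2\jb{k,\eta}^\sigma\abs{\widehat K(k)}\abs{k}\abs{\widehat g(t,k,kt)}\abs{\eta-kt}\abs{\widehat\mu(\eta-kt)}.
\]
I split $\jb{k,\eta}^\sigma\lesssim_\sigma\jb{k,kt}^\sigma\jb{\eta-kt}^\sigma$ using Lemma~\ref{japanesebracketlemma}, and absorb $\abs{k}\abs{\widehat K(k)}\le[K]_1$. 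The quantity then factors as a function of $k$ times a function of $\eta-kt$. Taking $L^2_\eta$ first, translation invariance gives $\norm{\jb{\cdot}^\sigma\abs{\cdot}\widehat\mu}_{L^2}\approx\norm{\nabla\mu}_{H^\sigma}$. The remaining $L^2_k$ norm is $\norm{\jb{k,kt}^\sigma\widehat g(k,kt)}_{L^2_k}\lesssim\norm{g}_{H^\sigma_2}$ by Lemma~\ref{boundonenergy}. This proves part (i).

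\emph{Nonlinear part.} After the sine bound, both $\abs{\mathcal N[g_1,g_2]}$ and $\abs{\mathcal N^\hbar[g_1,g_2]}$ are dominated by
\[
\int\abs{\widehat K(\ell)}\abs{\ell}\abs{\widehat{g_1}(\ell,\ell t)}\abs{\eta-kt}\abs{\widehat{g_2}(k-\ell,\eta-\ell t)}\,d\ell .
\]
The factor $\abs{\eta-kt}$ is the main point. I write $\eta-kt=(\eta-\ell t)-t(k-\ell)$, so that $\abs{\eta-kt}\le\jb{t}\jb{k-\ell,\eta-\ell t}$. This shows the loss is one derivative on $g_2$ and one factor $\jb{t}$, matching the statement; $\abs{\ell}\abs{\widehat K(\ell)}\le[K]_1$ is again absorbed.

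For the weight I use the triangle inequality
\[
\jb{k,\eta}^\sigma\lesssim_\sigma\jb{\ell,\ell t}^\sigma+\jb{k-\ell,\eta-\ell t}^\sigma,
\]
valid since $(k,\eta)=(\ell,\ell t)+(k-\ell,\eta-\ell t)$. This yields two terms. In the first, the weight $\jb{\ell,\ell t}^\sigma$ falls on $g_1$. I bound it by Lemma~\ref{importantlemma}~\ref{importantlemmag_1H^0_2g_2H^s}, with $w_1$ defined on the Fourier side by $\jb{\ell,\eta'}^\sigma\widehat{g_1}$ and $w_2$ having Fourier transform $\jb{\cdot}\widehat{g_2}$. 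Here $\norm{w_1}_{H^0_2}\lesssim\norm{g_1}_{H^\sigma_2}$ and $\norm{w_2}_{H^s}\lesssim\norm{g_2}_{H^{s+1}}$, giving the term $\norm{g_1}_{H^\sigma_2}\norm{g_2}_{H^{s+1}}$. In the second, the weight falls on $g_2$. Lemma~\ref{importantlemma}~\ref{importantlemmag_1H^s_2g_2L^2} with $w_1=g_1$ and $w_2$ having transform $\jb{\cdot}^{\sigma+1}\widehat{g_2}$ gives $\norm{g_1}_{H^s_2}\norm{g_2}_{H^{\sigma+1}}$.

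The main obstacle is the first term. One must check that multiplying $\widehat{g_1}$ by $\jb{\ell,\eta}^\sigma$ before restricting to $\eta=\ell t$ defines a legitimate $H^0_2$ function, that is, that velocity moments commute with the Fourier multiplier up to lower-order terms. This commutation is exactly what Lemma~\ref{boundonenergy} was set up to provide. The sine bound, by contrast, is routine.
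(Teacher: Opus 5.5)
Your proposal is correct and follows the paper's proof essentially step for step. It uses the same Peetre splitting $\jb{k,\eta}^\sigma\lesssim\jb{k,kt}^\sigma\jb{\eta-kt}^\sigma$ together with Lemma~\ref{boundonenergy} for the linear term, and the same shear bound $\abs{\eta-kt}\lesssim\jb{t}\jb{k-\ell,\eta-\ell t}$ and triangle splitting of the weight for the nonlinear term, with Lemma~\ref{importantlemma}~\ref{importantlemmag_1H^0_2g_2H^s} and Lemma~\ref{importantlemma}~\ref{importantlemmag_1H^s_2g_2L^2} applied to exactly the same $w_1,w_2$ as in the paper, and $\abs{\sin x}\le\abs{x}$ to transfer the bounds to $L^\hbar_\mu$ and $N^\hbar$. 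The point you flag about $w_1=\jb{\nabla_z,\nabla_\xi}^\sigma g_1$ causes no difficulty: the proof of Lemma~\ref{importantlemma}~\ref{importantlemmag_1H^0_2g_2H^s} uses $w_1$ only through $\norm{\widehat{w_1}(\ell,\ell t)}_{L^2_\ell}=\norm{\jb{\ell,\ell t}^\sigma\widehat{g_1}(\ell,\ell t)}_{L^2_\ell}$, and Lemma~\ref{boundonenergy} bounds this directly by $\norm{g_1}_{H^\sigma_2}$.
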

\begin{proof}
Arguments for the Vlasov operators on the Fourier side \(\mathcal{L}\) and \(\mathcal{N}\) given by \eqref{mathcalL} and \eqref{mathcalN} are provided. 
For the linear estimate, apply $\jb{k,\eta}^\sigma\lesssim\jb{k,kt}^\sigma\jb{\eta-kt}^\sigma$,
\begin{align*}
    \norm{\jb{k,\eta}^\sigma\mathcal L_\mu[g](t,k,\eta)}_{L^2_{k,\eta}}\lesssim_{K}\br{\int_{\R^3\times\R^3}\jb{k,kt}^{2\sigma}\abs{\widehat{g}(t,k,kt)}^2\jb{\eta-kt}^{2\sigma}\abs{\widehat{\nabla\mu}(\eta-kt)}^2\,d\eta dk}^{1/2}
\end{align*}where $[K]_1\lesssim1$ was used. Changing variables $\eta\mapsto\eta+kt$ and applying Lemma \ref{boundonenergy} yields the result. 

For the nonlinear estimate, use $\jb{k,\eta}^\sigma\lesssim\jb{\ell,\ell t}^\sigma+\jb{k-\ell,\eta-\ell t}^\sigma$ and $\abs{\eta-kt}\lesssim\jb{t}\jb{k-\ell,\eta-\ell t}$ (Lemma~\ref{japanesebracketlemma}~\ref{JBtriangle} and~\ref{JBshear}),
\begin{align*}
    &\norm{\jb{k,\eta}^\sigma\mathcal N[g_1,g_2](t,k,\eta)}_{L^2_{k,\eta}}\\
    &\hspace{0.5cm}\lesssim_K\jb{t}\norm{\int_{\R^3}\br{\jb{\ell,\ell t}^\sigma+\jb{k-\ell,\eta-\ell t}^\sigma}\abs{\widehat{g_1}(t,\ell,\ell t)}\jb{k-\ell,\eta-\ell t}\abs{\widehat{g_2}(t,k-\ell,\eta-\ell t)}\,d\ell}_{L^2_{k,\eta}}.
\end{align*} 
For the first contribution, apply Lemma~\ref{importantlemma}~\ref{importantlemmag_1H^0_2g_2H^s} with $w_1=\jb{\nabla_z,\nabla_\xi}^\sigma g_1(t)$ and $w_2=\jb{\nabla_z,\nabla_\xi}g_2(t)$,
\begin{align*}
    &\norm{\int_{\R^3}\jb{\ell,\ell t}^\sigma\abs{\widehat{g_1}(t,\ell,\ell t)}\jb{k-\ell,\eta-\ell t}\abs{\widehat{g_2}(t,k-\ell,\eta-\ell t)}\,d\ell}_{L^2_{k,\eta}}\lesssim\norm{g_1(t)}_{H^\sigma_2}\norm{g_2(t)}_{H^{s+1}}.
\end{align*}
For the second contribution, apply Lemma~\ref{importantlemma}~\ref{importantlemmag_1H^s_2g_2L^2} with $w_1=g_1(t)$ and $w_2=\jb{\nabla_z,\nabla_\xi}^{\sigma+1}g_2(t)$,
\begin{align*}
    \norm{\int_{\R^3}\jb{k-\ell,\eta-\ell t}^\sigma\abs{\widehat{g_1}(t,\ell,\ell t)}\jb{k-\ell,\eta-\ell t}\abs{\widehat{g_2}(t,k-\ell,\eta-\ell t)}\,d\ell}_{L^2_{k,\eta}}\lesssim\norm{g_1(t)}_{H^s_2}\norm{g_2(t)}_{H^{\sigma+1}}.
\end{align*}

The proof for the Hartree operators $\mathcal L^\hbar$ and $\mathcal N^\hbar$ given in \eqref{mathcalLhbar} and \eqref{mathcalNhbar} is identical upon applying \(\abs{\sin x}\le |x|\).
\end{proof}

The next lemma decomposes how the linear and nonlinear terms in the Vlasov and Hartree equations interact with the velocity weights $\xi^\alpha$.

\begin{lemma}[$\xi^\alpha$-weight decomposition]\label{decompositionlemma} Let $1\leq\aal\leq 2$, $\sigma\geq0$ and $s>3/2$. Assume $\mu\in H^\sigma_1(\R^3)$, $g(t)\in H^\sigma_2(\R^3\times\R^3)$, $g_1(t)\in H^{\max\set{\sigma,s}}_2(\R^3\times\R^3)$ and $g_2(t)\in H^{\max\set{\sigma,s}}_{\aal-1}(\R^3\times\R^3)$.
\begin{enumerate}[label=(\roman*)]
    \item Suppose $[K]_1<\infty$. Then the classical operators $L$ and $N$ satisfy
\begin{align*}
    \xi^\alpha L_\mu[g]&=L_{\xi^\alpha\mu}[g]+L_{<\alpha,\mu}[g],\\
    \xi^\alpha N[g_1,g_2]&= N[g_1,\xi^\alpha g_2]+N_{<\alpha}[g_1,g_2],
\end{align*}
where
\begin{align*}
    \norm{L_{<\alpha,\mu}[g](t)}_{H^\sigma}&\lesssim_{\sigma,K}\norm{g(t)}_{H^\sigma_2}\norm{\mu}_{H^\sigma_{\aal-1}},\\
    \norm{N_{<\alpha}[g_1,g_2](t)}_{H^\sigma}&\lesssim_{\sigma,s,K}\br{\norm{g_1(t)}_{H^\sigma_2}\norm{g_2(t)}_{H^{s}_{\aal-1}}+\norm{g_1(t)}_{H^s_2}\norm{g_2(t)}_{H^\sigma_{\aal-1}}}.
\end{align*}
\item Suppose $0<\hbar\leq 1$ and $[K]_2<\infty$. Then the quantum operators $L^\hbar$ and $N^\hbar$ satisfy\begin{align*}
    \xi^\alpha L^\hbar_\mu[g]&=L^\hbar_{\xi^\alpha\mu}[g]+L^\hbar_{<\alpha,\mu}[g],\\
    \xi^\alpha N^\hbar[g_1,g_2]&= N^\hbar[g_1,\xi^\alpha g_2]+N^\hbar_{<\alpha}[g_1,g_2],
\end{align*}
where
\begin{align*}
    \norm{L^\hbar_{<\alpha,\mu}[g](t)}_{H^\sigma}&\lesssim_{\sigma,K}\norm{g(t)}_{H^\sigma_2}\norm{\mu}_{H^\sigma_{\aal-1}},\\
    \norm{N^\hbar_{<\alpha}[g_1,g_2](t)}_{H^\sigma}&\lesssim_{\sigma,s,K}\br{\norm{g_1(t)}_{H^\sigma_2}\norm{g_2(t)}_{H^{s}_{\aal-1}}+\norm{g_1(t)}_{H^s_2}\norm{g_2(t)}_{H^\sigma_{\aal-1}}}.
\end{align*}
\end{enumerate}
\end{lemma}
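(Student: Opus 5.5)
Multiplying by $\xi^\alpha$ corresponds on the Fourier side to applying $(i\nabla_\eta)^\alpha$, so I would work with the Fourier representations \eqref{mathcalL}, \eqref{mathcalN}, \eqref{mathcalLhbar} and \eqref{mathcalNhbar}. I would expand the $\eta$-derivatives by the Leibniz rule. The terms in which all derivatives land on $\widehat\mu(\eta-kt)$ (respectively $\widehat{g_2}(k-\ell,\eta-\ell t)$) reproduce exactly $L_{\xi^\alpha\mu}[g]$ and $N[g_1,\xi^\alpha g_2]$ (and their quantum analogues). This works because $\widehat g(k,kt)$ and $\widehat{g_1}(\ell,\ell t)$ do not depend on $\eta$. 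The remainders $L_{<\alpha,\mu}$ and $N_{<\alpha}$ are then defined as the sum of the terms in which at least one derivative falls on the multiplier. In the classical case the multiplier is $k\cdot(\eta-kt)$ or $\ell\cdot(\eta-kt)$; in the quantum case it is $2\hbar^{-1}\sin(\tfrac\hbar2 q\cdot(\eta-kt))$ with $q=k$ or $q=\ell$.

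For the classical operators, the multiplier is linear in $\eta$. One derivative produces $k_j$ (or $\ell_j$), and two derivatives kill it. For $\aal=1$ the remainder is $-\widehat K(k)\widehat g(k,kt)k_j\widehat\mu(\eta-kt)$, and for $\aal=2$ there are additional terms $k_j\,\p_{\eta_i}\widehat\mu$, which correspond to $\xi_i\mu$ and have weight order $\aal-1$. The factor $k$ is absorbed by $\jb{k}\abs{\widehat K(k)}\le[K]_1$. The $H^\sigma$ bound then follows exactly as in the proof of Lemma~\ref{unweightedlemma}: split $\jb{k,\eta}^\sigma\lesssim\jb{k,kt}^\sigma\jb{\eta-kt}^\sigma$, change variables $\eta\mapsto\eta+kt$, and apply Lemma~\ref{boundonenergy}. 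This gives $\norm{g}_{H^\sigma_2}\norm{\mu}_{H^\sigma_{\aal-1}}$. For $N_{<\alpha}$ the analogous terms carry $\ell_j\widehat K(\ell)$, again bounded by $[K]_1$, times $\widehat{g_1}(\ell,\ell t)$ and $D^\beta_\eta\widehat{g_2}$ with $|\beta|\le\aal-1$. Note that no factor $\jb t$ or extra derivative on $g_2$ appears, since the factor $\eta-kt$ has been consumed. I would then split $\jb{k,\eta}^\sigma\lesssim\jb{\ell,\ell t}^\sigma+\jb{k-\ell,\eta-\ell t}^\sigma$ and apply Lemma~\ref{importantlemma}\ref{importantlemmag_1H^0_2g_2H^s} and \ref{importantlemmag_1H^s_2g_2L^2} respectively. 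This yields the two stated products.

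For the quantum operators, $\p_{\eta_j}$ of $2\hbar^{-1}\sin(\tfrac\hbar2q\cdot(\eta-kt))$ equals $q_j\cos(\cdot)$, which is bounded by $\abs{q}$. A second derivative gives $-\tfrac\hbar2 q_iq_j\sin(\cdot)$, bounded by $\tfrac\hbar2\abs{q}^2\le\abs q^2$ for $\hbar\le1$. This is the one genuinely new point, and the reason $[K]_2$ is required: the pure second-derivative remainder carries $\abs{\widehat K(q)}\abs q^2\le[K]_2$ and has no residual $\eta$-growth, so it pairs with undifferentiated $\widehat\mu$ or $\widehat{g_2}$. The mixed terms carry $\abs q\abs{\widehat K(q)}\le[K]_1\le[K]_2$. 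With these pointwise bounds in place, the estimates proceed identically to the classical case, using absolute values inside the integrals. The main step requiring care is this bookkeeping of the second derivative of the sine multiplier. The Leibniz expansion must leave each remainder term with at most $\aal-1$ velocity weights on $\mu$ or $g_2$, and with no factor of $\eta-kt$.
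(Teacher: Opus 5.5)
Your proposal is correct and follows essentially the same route as the paper: a Fourier-side Leibniz expansion of $(iD_\eta)^\alpha$, identification of the $\beta=0$ terms as the top-order operators, and pointwise bounds on the differentiated multipliers, with $[K]_2$ and $\hbar\le1$ absorbing the second derivative of the sine. These are followed by the splittings $\jb{k,\eta}^\sigma\lesssim\jb{k,kt}^\sigma\jb{\eta-kt}^\sigma$ and $\jb{k,\eta}^\sigma\lesssim\jb{\ell,\ell t}^\sigma+\jb{k-\ell,\eta-\ell t}^\sigma$ together with Lemmas~\ref{boundonenergy} and~\ref{importantlemma}; the only cosmetic difference is that the paper packages the first- and second-derivative bounds on the sine into the single estimate $\abs{x}\jb{\hbar x}$, whereas you treat the two orders separately.
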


\begin{proof}
Work on the Fourier side using that $\mathcal F_{z,\xi}\sbr{\xi^\alpha f(z,\xi)}(k,\eta)=(iD_\eta)^\alpha\widehat f(k,\eta)$.
   
   For the Vlasov equation, the Leibniz rule implies
\begin{align*}
   (iD_\eta)^\alpha\mathcal L_\mu[g](t,k,\eta)&=-i^\aal\widehat K(k)\widehat g(t,k,kt) k\cdot  \sum_{\beta\leq\alpha}c_{\alpha,\beta}D^\beta_\eta\sbr{\eta-kt}D^{\alpha-\beta}\widehat{\mu}(\eta-tk),\\
    (iD_\eta)^\alpha\mathcal N[g_1,g_2](t,k,\eta)&=-i^\aal(2\pi)^{-3}\int_{\R^3} \widehat K(\ell)\widehat {g_1}(t,\ell,\ell t)\ell\cdot \sum_{\beta\leq\alpha}c_{\alpha,\beta}D^\beta_\eta\sbr{\eta-kt}D^{\alpha-\beta}_\eta\widehat {g_2}(t,k-\ell,\eta-t\ell)\,d\ell.
\end{align*}The terms $\beta=0$ are exactly $\mathcal L_{\xi^\alpha\mu}[g](t,k,\eta)$ and $\mathcal N[g_1,\xi^\alpha g_2]$. For the remainders $L_{<\alpha,\mu}[g]$ and $N_{<\alpha}[g_1,g_2]$, apply
\begin{align*}
    \abs{D^\beta_{\eta}\sbr{\eta-kt}}\lesssim1.
\end{align*}This leads to, using $[K]_1\lesssim1$,
\begin{align}\label{VlasovremainderboundL}
        \abs{\mathcal L_{<\alpha,\mu}[g](t,k,\eta)}&\lesssim_K\abs{\widehat g(t,k,kt)}\sum_{\abs{\beta}<\aal}\abs{D^\beta\widehat\mu(\eta-kt)},\\
        \abs{\mathcal N_{<\alpha}[g_1,g_2](t,k,\eta)}&\lesssim_K\int_{\R^3} \abs{\widehat{g_1}(t,\ell,\ell t)}\sum_{\aab<\aal}\abs{D^\beta\widehat{g_2}(t,k-\ell ,\eta-\ell t)}\,d\ell.\label{VlasovremainderboundN}
    \end{align}
    For the linear term, applying $\jb{k,\eta}^\sigma\lesssim\jb{k,kt}^\sigma\jb{\eta-kt}^\sigma$ and Lemma~\ref{boundonenergy} implies
    \begin{align*}
        \norm{L_{<\alpha,\mu}[g](t)}_{H^\sigma}\lesssim\norm{\jb{k,kt}^\sigma \widehat g(t,k,kt)}_{L^2_k}\sum_{\abs{\beta}<\aal}\norm{\jb{\eta}^\sigma D^\beta\widehat{\mu}(\eta)}_{L^2_\eta}\lesssim\norm{g}_{H^\sigma_2}\norm{\mu}_{H^\sigma_{\aal-1}}.
    \end{align*}
    For the nonlinear term, decompose $\jb{k,\eta}^\sigma\lesssim\jb{\ell,\ell t}^\sigma+\jb{k-\ell,\eta-\ell t}^\sigma$. For the first contribution, apply Lemma~\ref{importantlemma}~\ref{importantlemmag_1H^0_2g_2H^s} with $w_1=\jb{\nabla_z,\nabla_\xi}^\sigma g_1$ and $w_2=\sum_{\aab<\aal}\xi^\beta g_2$,
    \begin{align*}
        \norm{\int_{\R^3} \jb{\ell,\ell t}^\sigma\abs{\widehat{g_1}(t,\ell,\ell t)}\sum_{\aab<\aal}\abs{D^\beta\widehat{g_2}(t,k-\ell ,\eta-\ell t)}\,d\ell}_{L^2_{k,\eta}}\lesssim\norm{g_1}_{H^\sigma_2}\norm{g_2}_{H^s_{\aal-1}}.
    \end{align*}
    For the second contribution, apply Lemma~\ref{importantlemma}~\ref{importantlemmag_1H^s_2g_2L^2} with $w_1=g_1$ and $w_2=\sum_{\aab<\aal}\jb{\nabla_z,\nabla_\xi}^\sigma(\xi^\beta g_2)$, yielding
    \begin{align*}
        \norm{\int_{\R^3} \abs{\widehat{g_1}(t,\ell,\ell t)}\sum_{\aab<\aal}\jb{k-\ell,\eta-\ell t}^\sigma\abs{D^\beta\widehat{g_2}(t,k-\ell ,\eta-\ell t)}\,d\ell}_{L^2_{k,\eta}}\lesssim\norm{g_1}_{H^s_2}\norm{g_2}_{H^\sigma_{\aal-1}}.
    \end{align*}
This concludes the proof for the Vlasov terms.

    For the Hartree equation, the Leibniz rule implies 
    \begin{align*}
        (iD_\eta)^\alpha\mathcal L^\hbar_\mu[g](t,k,\eta)&=-i^\aal2\hbar^{-1}\widehat K(k)\widehat g(t,k,kt)\sum_{\beta\leq\alpha}c_{\alpha,\beta}D_\eta^\beta\sbr{\sin\br{\mytextfrac{\hbar}{2}k\cdot(\eta-kt)}}D^{\alpha-\beta}\widehat \mu(\eta-kt),\\
         (iD_\eta)^\alpha\mathcal N^\hbar[g_1,g_2](t,k,\eta)&=-i^\aal2\hbar^{-1}(2\pi)^{-3}\int_{\R^3} \widehat{K}(\ell )\widehat{g_1}(t,\ell,\ell t)\\
         &\hspace{3cm}\times\sum_{\beta\leq\alpha}c_{\alpha,\beta}D_\eta^\beta\sbr{\sin\br{\mytextfrac{\hbar}{2}\ell \cdot (\eta-kt)}}D^{\alpha-\beta}_\eta\widehat{g_2}(t,k-\ell ,\eta-\ell t)\,d\ell.
    \end{align*}The terms $\beta=0$ are exactly $\mathcal L^\hbar_{\xi^\alpha\mu}[g](t,k,\eta)$ and $\mathcal N^\hbar[g_1,\xi^\alpha g_2]$. For the remainders $L_{<\alpha,\mu}^\hbar[g]$ and $N_{<\alpha}^\hbar[g_1,g_2]$, apply the chain rule and $\abs{\sin}$, $\abs{\cos}\leq1$,
    \begin{align*}
        2\hbar^{-1}\abs{D^\beta_\eta\sbr{\sin\br{\mytextfrac{\hbar}{2}x\cdot(\eta-kt)}}}\leq\hbar^{-1}(\hbar\abs{x})^{\aab}\leq\abs{x}\jb{\hbar x}
    \end{align*}for $1\leq\abs{\beta}\leq 2$.
    This leads to
    \begin{align*}
        \abs{\mathcal L_{<\alpha,\mu}^\hbar[g](t,k,\eta)}&\lesssim\jb{\hbar k}\abs{k}\abs{\widehat K(k)}\abs{\widehat g(t,k,kt)}\sum_{\abs{\beta}<\aal}\abs{D^\beta\widehat\mu(\eta-kt)},\\
        \abs{\mathcal N_{<\alpha}^\hbar[g_1,g_2](t,k,\eta)}&\lesssim\int_{\R^3} \jb{\hbar\ell}\abs{\ell}\abs{\widehat{K}(\ell)}\abs{\widehat{g_1}(t,\ell,\ell t)}\sum_{\aab<\aal}\abs{D^\beta\widehat{g_2}(t,k-\ell ,\eta-\ell t)}\,d\ell.
    \end{align*}
    The assumptions $0<\hbar\leq 1$ and $[K]_2\lesssim1$ allow us to control \begin{align*}
        \jb{\hbar k}\abs{k}\abs{\widehat K(k)}\lesssim1.
    \end{align*}The proof now concludes exactly as in the Vlasov case, with these bounds playing the roles of \eqref{VlasovremainderboundL} and \eqref{VlasovremainderboundN}.
\end{proof}

The following lemma combines Lemmas~\ref{unweightedlemma} and~\ref{decompositionlemma} to prove weighted $H^\sigma_2$ estimates on the linear and nonlinear terms.
\begin{lemma}[Weighted $H^\sigma_2$ bounds]\label{lemmaboundsonLN}
    Let $\sigma\geq0$ and $s>3/2$. Suppose $\mu\in H^{\sigma+1}_2(\R^3)$, $g(t)\in H^{\sigma}_2(\R^3\times\R^3)$, $g_1(t)\in H^{\max\set{\sigma,s}}_2(\R^3\times\R^3)$ and $g_2(t)\in H^{\max\set{\sigma,s}+1}_{2}(\R^3\times\R^3)$. 
    
    Suppose $[K]_1<\infty$. Then the classical operators $L$ and $N$ satisfy
    \begin{align*}
        \norm{L_\mu[g](t)}_{H^\sigma_2}&\lesssim_{\sigma,K}\norm{g(t)}_{H^\sigma_2}\norm{\mu}_{H^{\sigma+1}_2},\\
       \norm{ N[g_1,g_2](t)}_{H^\sigma_2}&\lesssim_{\sigma,s,K}\jb{t}\br{\norm{g_1(t)}_{H^\sigma_2}\norm{g_2(t)}_{H_2^{s+1}}+\norm{g_1(t)}_{H^{s}_2}\norm{g_2(t)}_{H_2^{\sigma+1}}}.
    \end{align*}

    Suppose $0<\hbar\leq 1$ and $[K]_2<\infty$. Then the quantum operators $L^\hbar$ and $N^\hbar$ satisfy
    \begin{align*}
         \norm{L^\hbar_{\mu}[g](t)}_{H^\sigma_2}&\lesssim_{\sigma,K}\norm{\mu}_{H^{\sigma+1}_2}\norm{g(t)}_{H^\sigma_2},\\
        \norm{N^\hbar [g_1,g_2](t)}_{H^\sigma_2}&\lesssim_{\sigma,s,K}\jb{t}\br{\norm{g_1(t)}_{H^\sigma_2}\norm{g_2(t)}_{H_2^{s+1}}+\norm{g_1(t)}_{H^{s}_2}\norm{g_2(t)}_{H_2^{\sigma+1}}}.
    \end{align*}
\end{lemma}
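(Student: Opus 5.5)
By Definition~\ref{Sobolevnormsdefinition}, $\norm{F}_{H^\sigma_2}^2=\sum_{|\alpha|\le2}\norm{\xi^\alpha F}_{H^\sigma}^2$. So it suffices to bound $\norm{\xi^\alpha F}_{H^\sigma}$ for each multi-index $|\alpha|\le2$, taking $F$ to be each of $L_\mu[g]$, $N[g_1,g_2]$, $L^\hbar_\mu[g]$ and $N^\hbar[g_1,g_2]$. The plan is an induction on $|\alpha|$ that combines Lemma~\ref{unweightedlemma} with Lemma~\ref{decompositionlemma}.

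\emph{Case $\alpha=0$.} This is exactly Lemma~\ref{unweightedlemma}. For the linear term it gives the bound $\norm{\nabla\mu}_{H^\sigma}\norm{g}_{H^\sigma_2}\lesssim\norm{\mu}_{H^{\sigma+1}_2}\norm{g}_{H^\sigma_2}$. For the nonlinear term it gives $\jb{t}\br{\norm{g_1}_{H^\sigma_2}\norm{g_2}_{H^{s+1}}+\norm{g_1}_{H^s_2}\norm{g_2}_{H^{\sigma+1}}}$, which is dominated by the claimed right-hand side since $H^{r}_2\subset H^r$. The hypothesis $[K]_1<\infty$ is what is needed here; in the quantum case it follows from $[K]_2<\infty$.

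\emph{Case $1\le|\alpha|\le2$.} Apply Lemma~\ref{decompositionlemma} to write
\[
\xi^\alpha L_\mu[g]=L_{\xi^\alpha\mu}[g]+L_{<\alpha,\mu}[g],\qquad \xi^\alpha N[g_1,g_2]=N[g_1,\xi^\alpha g_2]+N_{<\alpha}[g_1,g_2],
\]
and the analogous identities for $L^\hbar$ and $N^\hbar$. This is the step that forces $[K]_2<\infty$ and $\hbar\le1$ in the quantum case.

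The leading terms are handled by Lemma~\ref{unweightedlemma} with $\mu$ replaced by $\xi^\alpha\mu$ and $g_2$ replaced by $\xi^\alpha g_2$:
\begin{itemize}
\item For the linear term this gives $\norm{\nabla(\xi^\alpha\mu)}_{H^\sigma}\norm{g}_{H^\sigma_2}$. Expanding $\nabla(\xi^\alpha\mu)=\xi^\alpha\nabla\mu+(\nabla\xi^\alpha)\mu$ bounds the first factor by $\norm{\mu}_{H^{\sigma+1}_2}$, since each term is a weight of order at most two times $\mu$ or $\nabla\mu$.
\item For the nonlinear term this gives $\jb{t}\br{\norm{g_1}_{H^\sigma_2}\norm{\xi^\alpha g_2}_{H^{s+1}}+\norm{g_1}_{H^s_2}\norm{\xi^\alpha g_2}_{H^{\sigma+1}}}$. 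Since $\norm{\xi^\alpha g_2}_{H^{r}}\le\norm{g_2}_{H^r_2}$ for $|\alpha|\le2$, this is of the claimed form.
\end{itemize}

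The remainders are controlled directly by Lemma~\ref{decompositionlemma}:
\begin{itemize}
\item $\norm{L_{<\alpha,\mu}[g]}_{H^\sigma}\lesssim\norm{g}_{H^\sigma_2}\norm{\mu}_{H^\sigma_1}$, which is bounded by $\norm{g}_{H^\sigma_2}\norm{\mu}_{H^{\sigma+1}_2}$.
\item $\norm{N_{<\alpha}[g_1,g_2]}_{H^\sigma}\lesssim\norm{g_1}_{H^\sigma_2}\norm{g_2}_{H^s_1}+\norm{g_1}_{H^s_2}\norm{g_2}_{H^\sigma_1}$. This is bounded by the claimed right-hand side, with room to spare: one fewer derivative on $g_2$ and no $\jb{t}$ factor.
\end{itemize}
The same estimates hold verbatim for $L^\hbar_{<\alpha,\mu}$ and $N^\hbar_{<\alpha}$.

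\emph{Hypotheses and the main obstacle.} The regularity assumptions on $\mu,g,g_1,g_2$ must make each invocation of the earlier lemmas legitimate. Specifically, Lemma~\ref{unweightedlemma} requires $\xi^\alpha g_2\in H^{s+1}\cap H^{\sigma+1}$, which follows from $g_2\in H^{\max\{\sigma,s\}+1}_2$, and $\nabla(\xi^\alpha\mu)\in H^\sigma$, which follows from $\mu\in H^{\sigma+1}_2$. I expect the only delicate point to be this bookkeeping: weights commute with the Fourier multiplier $\jb{\nabla_x,\nabla_\xi}^\sigma$ only up to lower-order terms. Here, however, $\xi^\alpha$ is applied before the multiplier, both in the norm definition and in the decomposition, so no commutator arises. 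Summing the squares over $|\alpha|\le2$ then concludes.
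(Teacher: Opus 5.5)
Your proposal is correct and is essentially the paper's proof: the case $\alpha=0$ is Lemma~\ref{unweightedlemma} itself, and for $1\le\abs{\alpha}\le2$ you split $\xi^\alpha F$ via Lemma~\ref{decompositionlemma}, bound the leading terms by Lemma~\ref{unweightedlemma} applied to $\xi^\alpha\mu$ and $\xi^\alpha g_2$, and bound the remainders directly by Lemma~\ref{decompositionlemma}. One simplification: you do not need to expand $\nabla(\xi^\alpha\mu)$, since $\norm{\nabla(\xi^\alpha\mu)}_{H^\sigma}\le\norm{\xi^\alpha\mu}_{H^{\sigma+1}}\le\norm{\mu}_{H^{\sigma+1}_2}$ follows at once from the definition of the norm.
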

\begin{proof}
To estimate the weighted $H^\sigma_2$ norms, use Lemma \ref{decompositionlemma} to decompose into highest-order and remainder terms. The highest-order terms are controlled by Lemma \ref{unweightedlemma} applied to $\xi^\alpha\mu$ and $\xi^\alpha g_2$, and the remainder terms by the bounds in Lemma~\ref{decompositionlemma}.
\end{proof}

\subsection{Symmetric energy cancellation for nonlinear terms}A symmetric structure in the nonlinear terms yields improved energy estimates under minimal regularity assumptions.

Introduce the \(H^\sigma\) and weighted \(H^\sigma_M\) inner products by
\begin{align}\label{innerproductdefinition}
    \jb{g_1,g_2}_{H^\sigma}=(2\pi)^{-6}\int_{\R^3\times\R^3}\jb{k,\eta}^{2\sigma}\overline{\widehat{g_1}(k,\eta)}\widehat{g_2}(k,\eta)\,d\eta dk,\quad\jb{g_1,g_2}_{H^\sigma_2}=\sum_{\aal\leq 2}\jb{\xi^\alpha g_1,\xi^\alpha g_2}_{H^\sigma}.
\end{align}
The next lemma uses a symmetric cancellation in the bilinear structure to estimate\begin{align*}
    \abs{\Re\jb{g_2(t),N[g_1,g_2](t)\vphantom{2_2^2}}_{H^\sigma_2}}\qaq \abs{\Re\jb{g_2(t),N^\hbar[g_1,g_2](t)\vphantom{2_2^2}}_{H^\sigma_2}}.
\end{align*}Estimating these by Cauchy--Schwarz and Lemma~\ref{lemmaboundsonLN} would require $g_2(t)\in H^{\sigma+1}_2$, a loss of one derivative in the second argument; the cancellation requires only $g_2(t)\in H^\sigma_2$, which is what allows the energy estimates to close at the top regularity.

\begin{lemma}[Energy cancellation]Let $\sigma\geq1$ and $s>3/2$. Suppose $g_1(t),g_2(t)\in H^{\max\set{\sigma,s+1}}_2(\R^3\times\R^3)$ with $g_1(t)$ real-valued. 

Suppose $[K]_1<\infty$. Then the Vlasov nonlinear term satisfies
    \begin{align*}
        \abs{\Re\jb{g_2(t),N[g_1,g_2](t)\vphantom{2_2^2}}_{H^\sigma_2}}\lesssim_{\sigma,s,K}\jb{t}\norm{g_2(t)}_{H^\sigma_2}\br{\norm{g_1(t)}_{H^\sigma_2}\norm{g_2(t)}_{H^{s+1}_2}+\norm{g_1(t)}_{H^{s+1}_2}\norm{g_2(t)}_{H^\sigma_2}}.
    \end{align*}
    
Suppose $0<\hbar\leq 1$ and $[K]_2<\infty$. Then the Hartree nonlinear term satisfies 
\begin{align*}
        \abs{\Re\jb{g_2(t),N^\hbar[g_1,g_2](t)\vphantom{2_2^2}}_{H^\sigma_2}}\lesssim_{\sigma,s,K}\jb{t}\norm{g_2(t)}_{H^\sigma_2}\br{\norm{g_1(t)}_{H^\sigma_2}\norm{g_2(t)}_{H^{s+1}_2}+\norm{g_1(t)}_{H^{s+1}_2}\norm{g_2(t)}_{H^\sigma_2}}.
    \end{align*}\label{symmetrylemma}
    \end{lemma}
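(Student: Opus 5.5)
First decompose the weights with Lemma~\ref{decompositionlemma}: for each $|\alpha|\le2$, $\xi^\alpha N[g_1,g_2]=N[g_1,\xi^\alpha g_2]+N_{<\alpha}[g_1,g_2]$, and likewise for $N^\hbar$. The remainder contributes $\abs{\jb{\xi^\alpha g_2,N_{<\alpha}[g_1,g_2]}_{H^\sigma}}$. By Cauchy--Schwarz and the remainder bound of Lemma~\ref{decompositionlemma}, this is at most
\[
\norm{g_2}_{H^\sigma_2}\br{\norm{g_1}_{H^\sigma_2}\norm{g_2}_{H^s_1}+\norm{g_1}_{H^s_2}\norm{g_2}_{H^\sigma_1}},
\]
which is within the claimed bound. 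The problem therefore reduces to the unweighted estimate of $\Re\jb{G,N[g_1,G]}_{H^\sigma}$ with $G:=\xi^\alpha g_2$. Here only $G\in H^\sigma$ may be used at top order, together with $\norm{G}_{H^{s+1}}\le\norm{g_2}_{H^{s+1}_2}$.

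On the Fourier side, write $\jb{G,N[g_1,G]}_{H^\sigma}$ as
\[
\iint\!\!\int \jb{k,\eta}^{2\sigma}\,\overline{\widehat G(k,\eta)}\,m(\ell;k,\eta)\,\widehat K(\ell)\widehat{g_1}(\ell,\ell t)\,\widehat G(k-\ell,\eta-\ell t)\,d\ell\,d\eta\,dk,
\]
where $m=-\ell\cdot(\eta-kt)$ in the classical case and $m=-2\hbar^{-1}\sin(\tfrac\hbar2\ell\cdot(\eta-kt))$ in the quantum case. Substitute $(k',\eta')=(k-\ell,\eta-\ell t)$, so that $\eta'-k't=\eta-kt$ and the multiplier depends only on this common invariant combination. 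Since $g_1$ and $K$ are real, $\overline{\widehat K(\ell)\widehat{g_1}(\ell,\ell t)}=\widehat K(-\ell)\widehat{g_1}(-\ell,-\ell t)$. The multiplier is odd in $\ell$ (this is where $\sin$ being odd matters in the Hartree case) and purely real. Swapping $(k,\eta)\leftrightarrow(k',\eta')$ and $\ell\to-\ell$ therefore shows that the integrand without the weight is anti-Hermitian. Hence
\[
2\Re\jb{G,N[g_1,G]}_{H^\sigma}=\iiint\br{\jb{k,\eta}^{2\sigma}-\jb{k',\eta'}^{2\sigma}}\overline{\widehat G(k,\eta)}\,m\,\widehat K\widehat{g_1}\,\widehat G(k',\eta')\,d\ell\,d\eta\,dk.
\]
Concretely, this is the statement that the transport field $\nabla V_{g_1}(z+t\xi)\cdot(\nabla_\xi-t\nabla_z)$ is divergence free, and the sine difference-quotient operator is likewise skew.

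The main step, and the expected main obstacle, is to bound the commutator of the weights. By the mean value theorem,
\[
\abs{\jb{k,\eta}^{2\sigma}-\jb{k',\eta'}^{2\sigma}}\lesssim\jb{\ell,\ell t}\br{\jb{k,\eta}^{\sigma}+\jb{k',\eta'}^{\sigma}}\br{\jb{\ell,\ell t}^{\sigma-1}+\jb{k',\eta'}^{\sigma-1}};
\]
this uses $\sigma\ge1$ and Lemma~\ref{japanesebracketlemma}. The multiplier satisfies $\abs m\le\abs\ell\abs{\eta-kt}\lesssim\jb{t}\abs\ell\jb{k',\eta'}$, using $\abs{\sin x}\le\abs x$ in the quantum case. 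The extra $\jb{k',\eta'}$ is absorbed on the side that lacks a derivative. The difficulty is the factor $\jb{\ell,\ell t}$, which contains $t\abs{\ell}$ and so costs powers of $t$ beyond the single $\jb{t}$ allowed. To avoid this, do not use the full bracket $\jb{\ell,\ell t}$ in the mean value step. Instead, differentiate along the segment from $(k',\eta')$ to $(k,\eta)$, whose displacement is $(\ell,\ell t)$. Then pair the $t\ell$ displacement with the vanishing of the multiplier: $\eta-kt$ is invariant, so the $\eta$-derivative of the weight combines with the $k$-derivative as $(\nabla_k+t\nabla_\eta)$, and the extra $t$ is compensated by writing $\abs{\ell t}\jb{\ell}^{-1}$ against $\jb{\ell,\ell t}$ on the $g_1$ side. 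The $g_1$ side is evaluated in the norm $\norm{\jb{\ell,\ell t}^{\cdot}\widehat{g_1}(\ell,\ell t)}$ of Lemma~\ref{boundonenergy}, which already carries $\jb{\ell,\ell t}$ at no cost in $t$. The factor $\abs{\ell}$ in $m$ is absorbed by $[K]_1$, or by $[K]_2$ in the quantum case, where $\jb{\hbar\ell}$ factors may also appear.

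With this, the terms split into two regimes. In the first, the weight $\jb{\ell,\ell t}^{\sigma}$ falls on $g_1$, and the resulting term is estimated by Lemma~\ref{importantlemma}~\ref{importantlemmag_1H^0_2g_2H^s}, giving $\norm{g_1}_{H^\sigma_2}\norm{G}_{H^{s+1}}$. In the second, the weight falls on $G$ at order $\sigma$, and Lemma~\ref{importantlemma}~\ref{importantlemmag_1H^s_2g_2L^2} applies with $w_1$ carrying one derivative, giving $\norm{g_1}_{H^{s+1}_2}\norm{G}_{H^\sigma}$. Pairing each term with $\norm{G}_{H^\sigma}$ and summing over $\alpha$ yields both stated bounds.
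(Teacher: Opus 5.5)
Your proposal is correct and is essentially the paper's argument. It uses the same change of variables $(k,\eta,\ell)\mapsto(k-\ell,\eta-\ell t,-\ell)$, the reality of $K$ and $g_1$ with the oddness of the real, shear-invariant multiplier, the reverse triangle inequality for the weight commutator, $\abs{\eta-kt}\lesssim\jb{t}\jb{k-\ell,\eta-\ell t}$, $[K]_1$ to absorb $\abs{\ell}$, Lemma~\ref{importantlemma}~\ref{importantlemmag_1H^0_2g_2H^s} and~\ref{importantlemmag_1H^s_2g_2L^2} for the two regimes, and Lemma~\ref{decompositionlemma} for the $\xi^\alpha$ weights. Two remarks, with $a:=\jb{k,\eta}^\sigma$ and $b:=\jb{k',\eta'}^\sigma$: first, the factor $\jb{\ell,\ell t}$ you flag as the main obstacle is not one, since it simply lands on $g_1$ restricted to $\eta=\ell t$, which Lemmas~\ref{boundonenergy} and~\ref{importantlemma} control with no loss in $t$, so your directional-derivative detour is unnecessary; second, in your symmetric form $a^2-b^2=(a-b)(a+b)$ the $b(a-b)$ piece must use the mirrored commutator bound with $\jb{k,\eta}^{\sigma-1}$ (your displayed bound would otherwise put $\jb{k',\eta'}^{2\sigma-1}$ on one copy of $G$), or, more simply, that piece is the complex conjugate of the $a(a-b)$ piece, which reduces you to the paper's expression $\jb{k,\eta}^\sigma\bigl(\jb{k,\eta}^\sigma-\jb{k',\eta'}^\sigma\bigr)$.
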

\begin{proof}The argument is first carried out for Schwartz functions \( g_1, g_2 \in \mathcal{S}(\R^3\times\R^3) \), for which all Fourier-space manipulations are justified; the final estimate extends to all \( g_1, g_2 \in H^{\max\{\sigma,s+1\}}_2(\R^3\times\R^3)\) with $g_1$ real-valued by density and continuity of the bilinear form. 

Consider first the case $\alpha=0$,
\begin{align*}
    \Re&\jb{ g_2(t), N[g_1,g_2](t)}_{H^\sigma}\\
    &=-(2\pi)^{-9}\Re\int_{\R^9}\jb{k,\eta}^\sigma\overline{\widehat{g_2}(t,k,\eta)}\widehat K(\ell)\widehat {g_1}(t,\ell,\ell t)\ell\cdot (\eta-kt)\jb{k,\eta}^\sigma\widehat {g_2}(t,k-\ell,\eta-t\ell)\,d\ell d\eta dk.
\end{align*}
Define
\begin{align*}
    I:=\int_{\R^9}\jb{k,\eta}^\sigma\overline{\widehat{g_2}(t,k,\eta)}\widehat K(\ell)\widehat {g_1}(t,\ell,\ell t)\ell\cdot (\eta-kt)\jb{k-\ell,\eta-\ell t}^\sigma\widehat {g_2}(t,k-\ell,\eta-t\ell)\,d\ell d\eta dk.
\end{align*}Make the change of variables $k\mapsto k+\ell$, $\eta\mapsto \eta+t\ell$, and $\ell\mapsto -\ell$, noting that $\eta-kt$ is invariant,
\begin{align*}
    I=-\int_{\R^9}\jb{k-\ell,\eta-\ell t}^\sigma\overline{\widehat{g_2}(t,k-\ell,\eta-\ell t)}\widehat K(-\ell)\widehat {g_1}(t,-\ell,-\ell t)\ell\cdot (\eta-kt)\jb{k,\eta}^\sigma\widehat {g_2}(t,k,\eta)\,d\ell d\eta dk=-\overline I,
\end{align*}
since $\widehat K(-\ell)=\overline{\widehat K(\ell)}$ and $\widehat {g_1}(t,-\ell,-\ell t)=\overline{\widehat{g_1}(t,\ell,\ell t)}$ because $K$ and $g_1$ are real-valued. Hence $\Re I=0$.

Hence subtracting this zero integral, 
\begin{align*}
    \Re\jb{ g_2(t), N[g_1,g_2](t)}_{H^\sigma}&=-(2\pi)^{-9}\Re\int_{\R^9}\jb{k,\eta}^\sigma\overline{\widehat{g_2}(t,k,\eta)}\widehat K(\ell)\widehat {g_1}(t,\ell,\ell t)\ell\cdot (\eta-kt)\\
    &\hspace{2.5cm}\times\br{\jb{k,\eta}^\sigma-\jb{k-\ell,\eta-\ell t}^\sigma}\widehat {g_2}(t,k-\ell,\eta-t\ell)\,d\ell d\eta dk.
\end{align*}
The reverse triangle inequality for Japanese brackets (Lemma~\ref{japanesebracketlemma}~\ref{JBreverse}) with $\sigma\geq1$ implies
\begin{align*}
    \abs{\jb{k,\eta}^\sigma-\jb{k-\ell,\eta-\ell t}^\sigma}\lesssim\abs{\ell,\ell t}\br{\jb{\ell,\ell t}^{\sigma-1}+\jb{k-\ell,\eta-t\ell}^{\sigma-1}}.
\end{align*}
Therefore, applying Cauchy--Schwarz in $k,\eta$, $\abs{\ell\widehat{K}(\ell)}\lesssim1$ and $\abs{\eta-kt}\lesssim\jb{t}\jb{k-\ell,\eta-\ell t}$ (which follows from Lemma~\ref{japanesebracketlemma}~\ref{JBshear}),
\begin{align*}
    &\abs{\Re\jb{ g_2(t), N[g_1,g_2](t)\vphantom{2_2^2}}_{H^\sigma}}\\
    &\lesssim_K \jb{t}\norm{g_2(t)}_{H^\sigma}\norm{\int_{\R^3}\br{\jb{\ell,\ell t}^{\sigma-1}+\jb{k-\ell,\eta-t\ell}^{\sigma-1}}\abs{\reallywidehat {\jb{\nabla}g_1}(t,\ell,\ell t)}\abs{\reallywidehat {\jb{\nabla}g_2}(t,k-\ell,\eta-t\ell)}\,d\ell}_{L^2_{k,\eta}}.
\end{align*}
By the triangle inequality in $L^2_{k,\eta}$, the right-hand side splits into two contributions according to the two terms of the bracket. Applying Lemma~\ref{importantlemma}~\ref{importantlemmag_1H^0_2g_2H^s} to the first contribution and Lemma~\ref{importantlemma}~\ref{importantlemmag_1H^s_2g_2L^2} to the second yields
\begin{align*}
    \abs{\Re\jb{ g_2(t), N[g_1,g_2](t)\vphantom{2_2^2}}_{H^\sigma}}\lesssim\jb{t}\norm{g_2(t)}_{H^\sigma}\br{\norm{g_1(t)}_{H^\sigma_2}\norm{g_2(t)}_{H^{s+1}}+\norm{g_1(t)}_{H^{s+1}_2}\norm{g_2(t)}_{H^\sigma}}.
\end{align*}

If $1\leq\aal\leq 2$ then use Lemma \ref{decompositionlemma} to decompose \begin{align*}
        \Re\jb{\xi^\alpha g_2(t),\xi^\alpha N[g_1,g_2](t)}_{H^\sigma}=\Re\jb{\xi^\alpha g_2(t),N[g_1,\xi^\alpha g_2](t)}_{H^\sigma}+\Re\jb{\xi^\alpha g_2(t),N_{<\alpha}[g_1, g_2](t)}_{H^\sigma}.
    \end{align*} For the highest-order term, the case $\alpha=0$ applied with $\xi^\alpha g_2$ in place of $g_2$ implies that
\begin{align*}
    \abs{\Re\jb{\xi^\alpha g_2(t),N[g_1,\xi^\alpha g_2](t)\vphantom{2_2^2}}_{H^\sigma}}\lesssim_K\jb{t}\norm{g_2(t)}_{H^\sigma_2}\br{\norm{g_1(t)}_{H^\sigma_2}\norm{g_2(t)}_{H^{s+1}_2}+\norm{g_1(t)}_{H^{s+1}_2}\norm{g_2(t)}_{H^\sigma_2}},
\end{align*}  for $\aal\leq 2$. For the remainder term, use
    \begin{align*}
        \abs{\Re\jb{\xi^\alpha g_2(t),N_{<\alpha}[g_1, g_2](t)\vphantom{2_2^2}}_{H^\sigma}}\leq\norm{g_2(t)}_{H^\sigma_2}\norm{N_{<\alpha}[g_1, g_2](t)}_{H^\sigma}
    \end{align*}and Lemma \ref{decompositionlemma} to conclude the result. 

    The proof for the Hartree nonlinear term follows by the same argument: the change of variables sends $\sin\br{\mytextfrac{\hbar}{2}\ell\cdot(\eta-kt)}$ to its negative, exactly as it does $\ell\cdot(\eta-kt)$, so the cancellation applies verbatim, and the additional requirement on $K$ arises from the application of Lemma~\ref{decompositionlemma}. In the applications below, $g_1=W^{\hbar}[P^\hbar]$ is real-valued because $P^\hbar$ is self-adjoint, whence $\overline{W^{\hbar}[P^\hbar](x,\xi)}=W^{\hbar}[P^\hbar](x,\xi)$.
\end{proof}

\subsection{Semiclassical error estimates}\label{quantumerrorsection}
The error operators arising from the difference between the classical and quantum dynamics, denoted $R^L$ and $R^N$ and defined in \eqref{defnsofquantum}, are analysed in weighted Sobolev norms, leading to $O(\hbar^2)$ bounds.

\begin{proposition}[Quantum error bounds]\label{PropositionRemainderTerms}
Let $\sigma\geq0$ and $s>3/2$. Assume $[K]_1<\infty$ and $\mu\in H^{\sigma+3}_2(\R^3)$. Suppose $g(t)\in H^{\sigma+2}_2(\R^3\times\R^3)$, $g_1\in H^{\max\set{\sigma,s}+2}_2(\R^3\times\R^3)$ and $g_2\in H^{\max\set{\sigma,s}+3}_2(\R^3\times\R^3)$. Then, for all $\hbar>0$, 
\begin{align}
        \label{boundonRL}\norm{R^{L}_\mu[g](t)}_{H^\sigma_2}&\lesssim_{\sigma,K}\hbar^2\jb{t}^{-2}\norm{\mu}_{H^{\sigma+3}_2}\norm{g(t)}_{H^{\sigma+2}_2},\\
        \norm{R^{N}[g_1,g_2](t)}_{H^\sigma_2}&\lesssim_{\sigma,s,K}\hbar^2\jb{t}\br{\norm{g_1(t)}_{H^{s+2}_2}\norm{g_2(t)}_{H^{\sigma+3}_2}+\norm{g_1(t)}_{H^{\sigma+2}_2}\norm{g_2(t)}_{H^{s+3}_2}}.\label{boundonRN}
\end{align}
\end{proposition}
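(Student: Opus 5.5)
The plan is to work entirely on the Fourier side, using \eqref{mathcalL}, \eqref{mathcalN}, \eqref{mathcalLhbar} and \eqref{mathcalNhbar}. Set $w:=\eta-kt$ and $m_\hbar(x):=x-2\hbar^{-1}\sin\br{\tfrac{\hbar}{2}x}$. Then
\begin{align*}
\widehat{R^L_\mu[g]}(t,k,\eta)&=-\widehat K(k)\widehat g(t,k,kt)\,m_\hbar(k\cdot w)\,\widehat\mu(w),\\
\widehat{R^N[g_1,g_2]}(t,k,\eta)&=-(2\pi)^{-3}\int_{\R^3}\widehat K(\ell)\widehat{g_1}(t,\ell,\ell t)\,m_\hbar(\ell\cdot w)\,\widehat{g_2}(t,k-\ell,\eta-\ell t)\,d\ell.
\end{align*}
The elementary Taylor bounds (odd symmetry kills the quadratic term)
\[
\abs{m_\hbar(x)}\le\tfrac{\hbar^2}{24}\abs{x}^3,\qquad \abs{m_\hbar'(x)}=\abs{1-\cos(\tfrac{\hbar}{2}x)}\le\tfrac{\hbar^2}{8}x^2,\qquad \abs{m_\hbar''(x)}\le\tfrac{\hbar^2}{4}\abs{x}
\]
supply the factor $\hbar^2$. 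The derivative bounds are needed because the weights $\xi^\alpha$ become $(iD_\eta)^\alpha$. As in Lemma~\ref{decompositionlemma}, apply the Leibniz rule. Each $\eta$-derivative on $m_\hbar(q\cdot w)$ trades one power of $q\cdot w$ for one power of $\abs{q}$, so every term satisfies a bound of the form $\hbar^2\abs{q}^{j}\abs{q\cdot w}^{3-j}$ with $j=\abs{\beta}\le2$. Each is lower order in $w$ and is estimated like the top term below.

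For the linear error, I would use $\abs{k\cdot w}^3\le\abs{k}^3\abs{w}^3$ and absorb one $\abs{k}$ into $[K]_1$. The key observation for the decay factor is $\abs{k}^2=\abs{k,kt}^2\jb{t}^{-2}\le\jb{t}^{-2}\jb{k,kt}^2$. Then apply $\jb{k,\eta}^\sigma\lesssim\jb{k,kt}^\sigma\jb{w}^\sigma$ and change variables $\eta\mapsto\eta+kt$. This bounds the $H^\sigma$ norm of each Leibniz piece by
\[
\hbar^2\jb{t}^{-2}\norm{\jb{k,kt}^{\sigma+2}\widehat g(k,kt)}_{L^2_k}\,\norm{\jb{w}^{\sigma+3}D^{\beta'}\widehat\mu(w)}_{L^2_w}.
\]
Lemma~\ref{boundonenergy} and $\abs{\beta'}\le2$ then give \eqref{boundonRL}.

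For the nonlinear error, the main obstacle is to avoid a loss of $\jb{t}^3$ from $\abs{w}^3$. The plan is to decompose $w=(\eta-\ell t)-(k-\ell)t$, which gives
\[
\abs{\ell\cdot w}\le\abs{\ell}\abs{\eta-\ell t}+\abs{\ell t}\abs{k-\ell}\lesssim\jb{\ell,\ell t}\jb{k-\ell,\eta-\ell t}
\]
with no loss in $t$. I would use this for two of the three factors of $\ell\cdot w$. For the third factor I would use $\abs{\ell\cdot w}\le\abs{\ell}\jb{t}\jb{k-\ell,\eta-\ell t}$ (Lemma~\ref{japanesebracketlemma}~\ref{JBshear}) and absorb $\abs{\ell}$ with $[K]_1$. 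The result is
\[
\abs{\widehat K(\ell)m_\hbar(\ell\cdot w)}\lesssim\hbar^2\jb{t}\jb{\ell,\ell t}^2\jb{k-\ell,\eta-\ell t}^3.
\]
The Leibniz terms where derivatives fall on $m_\hbar$ give the same or fewer powers of $\jb{k-\ell,\eta-\ell t}$. In those terms the extra $\abs{\ell}$'s are bounded by $\jb{\ell,\ell t}$, and they carry the weight $\xi^{\alpha-\beta}g_2$ with fewer moments.

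Finally, split $\jb{k,\eta}^\sigma\lesssim\jb{\ell,\ell t}^\sigma+\jb{k-\ell,\eta-\ell t}^\sigma$ (Lemma~\ref{japanesebracketlemma}~\ref{JBtriangle}). For the first piece, apply Lemma~\ref{importantlemma}~\ref{importantlemmag_1H^0_2g_2H^s} with $w_1=\jb{\nabla}^{\sigma+2}g_1$ and $w_2=\jb{\nabla}^3\xi^{\alpha'}g_2$; this gives $\norm{g_1}_{H^{\sigma+2}_2}\norm{g_2}_{H^{s+3}_2}$. For the second piece, apply Lemma~\ref{importantlemma}~\ref{importantlemmag_1H^s_2g_2L^2} with $w_1=\jb{\nabla}^2g_1$ and $w_2=\jb{\nabla}^{\sigma+3}\xi^{\alpha'}g_2$; this gives $\norm{g_1}_{H^{s+2}_2}\norm{g_2}_{H^{\sigma+3}_2}$. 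Summing over $\abs{\alpha}\le2$ yields \eqref{boundonRN}. All Fourier manipulations would first be done for Schwartz functions and then extended by density.
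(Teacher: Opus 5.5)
Your proposal is correct and follows the paper's proof essentially step for step: the same remainder symbol $m_\hbar(x)=2\hbar^{-1}\phi(\tfrac{\hbar}{2}x)$ with $\phi(x)=x-\sin x$, the same Taylor bounds and Leibniz expansion, the same $\abs{k}^2\le\jb{t}^{-2}\jb{k,kt}^2$ trade for the linear term, and the same pointwise kernel bound $\hbar^2\jb{t}\jb{\ell,\ell t}^2\jb{k-\ell,\eta-\ell t}^3$ followed by the two applications of Lemma~\ref{importantlemma}. Your splitting $w=(\eta-\ell t)-(k-\ell)t$ is only a cosmetic variant of the paper's route: the paper combines $\abs{\ell}\le\jb{\ell,\ell t}/\jb{t}$ with the shear bound $\jb{\eta-kt}\lesssim\jb{t}\jb{k-\ell,\eta-\ell t}$, which gives exactly your per-factor bound $\abs{\ell\cdot w}\lesssim\jb{\ell,\ell t}\jb{k-\ell,\eta-\ell t}$.
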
\begin{proof}For the linear remainder term, define $\mathcal R^L_\mu[g](t,k,\eta)=\widehat{R^L_\mu[g]}(t,k,\eta)$, which satisfies 
\begin{align*}
    \mathcal R^{L}_\mu[g](t,k,\eta)&=-\widehat K(k)\widehat {g}(t,k,kt)2\hbar^{-1}\phi\br{\frac{\hbar}{2}k\cdot(\eta-kt)}\widehat\mu(\eta-kt),\quad\phi(x)=x-\sin x.
\end{align*} 
For $\aal\leq 2$, 
\begin{align*}
        D^\alpha_\eta\mathcal R^{L}_\mu[g](t,k,\eta)=-\widehat K(k)\widehat {g}(t,k,kt)\sum_{\beta\leq \alpha}c_{\alpha,\beta}D^\beta_\eta\sbr{2\hbar^{-1}\phi\br{\frac{\hbar}{2}k\cdot(\eta-kt)}}D^{\alpha-\beta}\widehat\mu(\eta-kt).
    \end{align*}
By the elementary inequalities $\abs{\sin x}\leq \abs{x}$, $\abs{1-\cos x}\leq\tfrac{\abs{x}^2}{2}$ and $\abs{x-\sin x}\leq\tfrac{\abs{x}^3}{6}$, it follows that\begin{align*}
    \abs{\frac{d^j}{dx^j}\phi(x)}\leq \abs{x}^{3-j},\quad j=0,1,2.
\end{align*}
Hence, by the chain rule, for any multi-index $\beta$ with $\abs{\beta}\leq 2$,
\begin{align}
    2\hbar^{-1}\abs{D^\beta_\eta\sbr{\phi\br{\frac{\hbar}{2}k\cdot(\eta-kt)}}}\leq \hbar^2 \ak^3\abs{\eta-kt}^{3-\aab}.\label{phiinequalityforlinear}
\end{align}
Using \eqref{phiinequalityforlinear}, $[K]_1\lesssim1$ and $\abs{k}=\abs{k,kt}/\jb{t}\leq\jb{k,kt}/\jb{t}$ yields
\begin{align*}
    \abs{D^\alpha_\eta\mathcal R^{L}_\mu[g](t,k,\eta)}&\lesssim_K\hbar^2\jb{t}^{-2}\jb{k,kt}^{2}\abs{\widehat {g}(t,k,kt)}\jb{\eta-kt}^{3}\sum_{\beta\leq\alpha}\abs{D^\beta_\eta\widehat{\mu}(\eta-kt)}.
\end{align*}
Applying Lemma~\ref{japanesebracketlemma}~\ref{JBproduct} in the form $\jb{k,\eta}^\sigma\lesssim\jb{k,kt}^\sigma\jb{\eta-kt}^\sigma$, taking the $L^2_{k,\eta}$ norm, and applying Lemma~\ref{boundonenergy}, gives \begin{align*}
    \sum_{\aal\leq2}\norm{\jb{k,\eta}^\sigma D^\alpha_\eta\mathcal R^{L}_\mu[g](t,k,\eta)}_{L^2_{k,\eta}}&\lesssim\hbar^2\jb{t}^{-2}\norm{\mu}_{H^{\sigma+3}_2}\norm{g(t)}_{H^{\sigma+2}_2}.
\end{align*}Then \eqref{boundonRL} follows by Plancherel.

For the nonlinear term, define $\mathcal R^N[g_1,g_2](t,k,\eta)=\reallywidehat{R^N[g_1,g_2]}(t,k,\eta)$, which admits the representation
\begin{align*}
    \mathcal R^{N}[g_1,g_2](t,k,\eta)&=-(2\pi)^{-3}\int_{\R^3} \widehat{K}(\ell )\widehat {g_1}(t,\ell,\ell t)2\hbar^{-1}\phi\br{\tfrac{\hbar}{2}\ell\cdot(\eta-kt)}\widehat{g_2}(t,k-\ell ,\eta-\ell t)\,d\ell.
\end{align*}
As in the linear case, 
\begin{align*}
    2\hbar^{-1}\abs{D^\beta_\eta\sbr{\phi\br{\frac{\hbar}{2}\ell\cdot(\eta-kt)}}}\leq \hbar^2 \abs{\ell}^3\abs{\eta-kt}^{3-\aab},\quad\abs{\beta}\leq 2,
\end{align*}
hence for $\aal\leq 2$ the Leibniz rule and $\abs{\eta-kt}^{3-\aab}\leq\jb{\eta-kt}^3$ imply
\begin{align*}
    \abs{D^\alpha_\eta\mathcal R^{N}[g_1,g_2](t,k,\eta)}&\lesssim\hbar^2\sum_{\beta\leq\alpha}\int_{\R^3} \abs{\widehat{K}(\ell )}\abs{\widehat {g_1}(t,\ell,\ell t)}\abs{\ell}^3\jb{\eta-kt}^3\abs{D^{\alpha-\beta}_\eta\widehat{g_2}(t,k-\ell ,\eta-\ell t)}\,d\ell.
\end{align*}
Using $[K]_1\lesssim1$, $\jb{\eta-kt}\lesssim\jb{t}\jb{k-\ell,\eta-\ell t}$ (Lemma~\ref{japanesebracketlemma}~\ref{JBshear}), and $\abs{\ell}\leq\jb{\ell,\ell t}/\jb{t}$,
\begin{align*}
    \abs{D^\alpha_\eta\mathcal R^{N}[g_1,g_2](t,k,\eta)}&\lesssim_K\hbar^2\jb{t}\sum_{\beta\leq\alpha}\int_{\R^3} \jb{\ell,\ell t}^2\abs{\widehat {g_1}(t,\ell,\ell t)}\jb{k-\ell,\eta-\ell t}^3\abs{D^{\alpha-\beta}_\eta\widehat{g_2}(t,k-\ell ,\eta-\ell t)}\,d\ell.
\end{align*}
By Lemma~\ref{japanesebracketlemma}~\ref{JBtriangle}, in the form $\jb{k,\eta}^\sigma\lesssim\jb{\ell,\ell t}^\sigma+\jb{k-\ell,\eta-\ell t}^\sigma$, decompose
\begin{align*}
    \jb{k,\eta}^\sigma\abs{D^\alpha_\eta\mathcal R^{N}[g_1,g_2](t,k,\eta)}\lesssim R(t,k,\eta)+T(t,k,\eta),
\end{align*}
where
\begin{align*}
    R(t,k,\eta)&=\hbar^2\jb{t}\sum_{\beta\leq\alpha}\int_{\R^3} \jb{\ell,\ell t}^{\sigma+2}\abs{\widehat {g_1}(t,\ell,\ell t)}\jb{k-\ell,\eta-\ell t}^3\abs{D^{\alpha-\beta}_\eta\widehat{g_2}(t,k-\ell ,\eta-\ell t)}\,d\ell,\\
    T(t,k,\eta)&=\hbar^2\jb{t}\sum_{\beta\leq\alpha}\int_{\R^3} \jb{\ell,\ell t}^2\abs{\widehat {g_1}(t,\ell,\ell t)}\jb{k-\ell,\eta-\ell t}^{\sigma+3}\abs{D^{\alpha-\beta}_\eta\widehat{g_2}(t,k-\ell ,\eta-\ell t)}\,d\ell.
\end{align*}

For the $L^2_{k,\eta}$ norm of $T(t)$, apply Minkowski's inequality in $k,\eta$ and Cauchy--Schwarz in $\ell$, followed by Lemma~\ref{boundonenergy}:
\begin{align*}
    \norm{T(t,k,\eta)}_{L^2_{k,\eta}}&\lesssim\hbar^2\jb{t}\sum_{\beta\leq\alpha}\norm{\jb{\ell,\ell t}^2\jb{\ell}^s\widehat {g_1}(t,\ell,\ell t)}_{L^2}\norm{\jb{k,\eta}^{\sigma+3}D^\beta_\eta\widehat {g_2}(t,k,\eta)}_{L^2_{k,\eta}}\\
    &\lesssim\hbar^2\jb{t}\norm{g_1(t)}_{H^{s+2}_2}\norm{g_2(t)}_{H^{\sigma+3}_2},
\end{align*}for $s>3/2$.

Similarly, for $R(t)$, Minkowski's inequality in $\eta$, Young's convolution inequality in $k$ and Lemma~\ref{boundonenergy} give
\begin{align*}
    \norm{R(t,k,\eta)}_{L^2_{k,\eta}}&\lesssim\hbar^2\jb{t}\sum_{\beta\leq \alpha}\norm{\int_{\R^3}\jb{\ell,\ell t}^{\sigma+2}\abs{\widehat{g_1}(t,\ell,\ell t)}\norm{\jb{k-\ell,\cdot}^{3}D^\beta_\eta\widehat{g_2}(t,k-\ell,\cdot)}_{L^2_\eta}\,d\ell}_{L^2_k}\\
    &\lesssim\hbar^2\jb{t}\norm{\jb{\ell,\ell t}^{\sigma+2}\widehat{g_1}(t,\ell,\ell t)}_{L^2_\ell}\sum_{\beta\leq \alpha}\norm{\jb{k,\eta}^{3}D^\beta_\eta\widehat{g_2}(t,k,\eta)}_{L^1_kL^2_\eta}\\
    &\lesssim\hbar^2\jb{t}\norm{g_1(t)}_{H^{\sigma+2}_2}\norm{g_2(t)}_{H^{s+3}_2}.
\end{align*}
The bound \eqref{boundonRN} follows.
\end{proof}

\section{Finite-time semiclassical convergence}\label{Gronwallsection}
The energy and error estimates are now combined into a closed differential inequality, proving Theorem~\ref{maintheorem1}. The following form of Gr\"onwall's inequality, immediate from the standard differential form~\cite[Appendix~B.2]{Evans2010}, converts it into the integral bound of Theorem~\ref{SemiclassicalLimitTheorem}.

\begin{lemma}[Gr\"onwall inequality]\label{gronwalllemma}
    Let $T>0$, let $a,b\in L^1(0,T)$ be non-negative, and let $X\colon[0,T]\to[0,\infty)$ be continuous with $X^2$ absolutely continuous. Suppose that
    \begin{align*}
        \frac{1}{2}\frac{d}{dt}X(t)^2\leq a(t)X(t)^2+b(t)X(t)
    \end{align*}
    for almost every $t\in[0,T]$. Then, with $A(t):=\int_0^t a(s)\,ds$,
    \begin{align*}
        X(t)\leq X(0)e^{A(t)}+\int_0^t b(s)e^{A(t)-A(s)}\,ds.
    \end{align*}
\end{lemma}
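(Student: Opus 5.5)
The inequality is a standard Grönwall estimate for the square. I will reduce it to the linear integral form by dividing by $X$, with a regularization to handle the points where $X$ vanishes.

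\textbf{Regularization.} For $\varepsilon>0$ set $Y_\varepsilon(t):=\sqrt{X(t)^2+\varepsilon^2}$. This function is absolutely continuous, being a composition of the Lipschitz map $r\mapsto\sqrt{r+\varepsilon^2}$ on $[0,\infty)$ with the absolutely continuous function $X^2$. Moreover $Y_\varepsilon\ge\varepsilon>0$.

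\textbf{Differential inequality for $Y_\varepsilon$.} Almost everywhere,
\begin{align*}
Y_\varepsilon'=\frac{\tfrac12 (X^2)'}{Y_\varepsilon}\le \frac{aX^2+bX}{Y_\varepsilon}\le aY_\varepsilon+b,
\end{align*}
using $X^2\le Y_\varepsilon^2$, $X\le Y_\varepsilon$ and $a,b\ge0$.

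\textbf{Integrating factor.} The function $e^{-A(t)}$ is absolutely continuous with derivative $-a(t)e^{-A(t)}$ almost everywhere, since $a\in L^1(0,T)$. The product of two absolutely continuous functions on a compact interval is absolutely continuous, so
\begin{align*}
\frac{d}{dt}\big(e^{-A}Y_\varepsilon\big)\le b\,e^{-A}\quad\text{almost everywhere}.
\end{align*}
Integrating from $0$ to $t$ and using that absolutely continuous functions are integrals of their derivatives gives
\begin{align*}
Y_\varepsilon(t)\le Y_\varepsilon(0)e^{A(t)}+\int_0^t b(s)e^{A(t)-A(s)}\,ds.
\end{align*}

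\textbf{Limit.} Since $X(t)\le Y_\varepsilon(t)$ and $Y_\varepsilon(0)\to X(0)$ as $\varepsilon\to0$, letting $\varepsilon\to0$ yields the claimed bound.

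The only point requiring care is the absolute continuity of $Y_\varepsilon$ and of the product with the integrating factor. This is exactly why the regularization $\varepsilon>0$ is used: it avoids dividing by $X$ where $X=0$, at which points $X$ itself need not be differentiable.
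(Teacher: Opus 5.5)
Your proof is correct and is essentially the argument the paper intends. The paper gives no proof beyond remarking that the lemma is immediate from the differential Gr\"onwall inequality in Evans, and your regularization $Y_\varepsilon=\sqrt{X^2+\varepsilon^2}$ with an integrating factor supplies the details that remark leaves out. The regularization is needed because absolute continuity of $X^2$ alone does not give that of $X=\sqrt{X^2}$. Likewise, doing the integrating-factor step yourself, rather than quoting Evans's bound $e^{A(t)}\bigl(X(0)+\int_0^t b\bigr)$, is what produces the sharper kernel $e^{A(t)-A(s)}$ in the statement.
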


The following theorem provides a quantitative estimate on the difference between the Hartree and Vlasov evolutions under two regularity regimes.

\begin{theorem}[Quantitative semiclassical limit]\label{SemiclassicalLimitTheorem}
    Let $\sigma>5/2$ and $T>0$. Let $g$ be a real-valued solution of~\eqref{equationforg(t)} on $[0,T]$ with $g(0)=h_{\rm in}$ and, for $\hbar>0$, let $P^\hbar(t)$ be a self-adjoint family with $g^\hbar=W^{\hbar}[P^\hbar]$ solving~\eqref{eqforg^hbar(t)} on $[0,T]$ and $P^\hbar(0)=Q_{\rm in}^\hbar$. Then the estimate
    \begin{align}\label{quantconclusion}
    \norm{g(t)-W^{\hbar}[P^\hbar(t)]}_{H^\sigma_2}\leq \norm{h_{\rm in}-W^{\hbar}[Q_{\mathrm{in}}^\hbar]}_{H^\sigma_2}e^{\mathcal A(t)}+\hbar^2\int_0^t \mathcal B(s) e^{\mathcal A(t)-\mathcal A(s)}\,ds,
    \qquad t\in[0,T],
    \end{align}
    holds in each of the following two regimes, with a constant $c_{\sigma,K}>0$ depending only on $\sigma$ and the finite kernel seminorm assumed in the relevant regime.
    \begin{enumerate}[label=(\alph*)]
        \item\label{aSLT} (Higher regularity on the Hartree solution) If $[K]_1<\infty$, then~\eqref{quantconclusion} holds for every $\hbar>0$ with
\begin{align}
        \begin{split}
            \mathcal{A}(t)&:=c_{\sigma,K}\int_0^t\br{\norm{\mu}_{H^{\sigma+1}_2}+\jb{s}\norm{g(s)}_{H^\sigma_2}+\jb{s}\norm{P^\hbar(s)}_{\mathcal H^{\sigma+1}_2}}\,ds,\\
        \mathcal{B}(t)&:=c_{\sigma,K}\norm{P^\hbar(t)}_{\mathcal H^{\sigma+2}_2}\br{\jb{t}^{-2}\norm{\mu}_{H^{\sigma+3}_2}+\jb{t}\norm{P^\hbar(t)}_{\mathcal H^{\sigma+3}_2}}.\label{undera}
        \end{split}
\end{align}
        \item\label{bSLT} (Higher regularity on the Vlasov solution) If $[K]_2<\infty$, then~\eqref{quantconclusion} holds for every $\hbar\in(0,1]$ with
    \begin{align}\begin{split}
        \mathcal{A}(t)&:=c_{\sigma,K}\int_0^t\br{\norm{\mu}_{H^{\sigma+1}_2}+\jb{s}\norm{P^\hbar(s)}_{\mathcal H^\sigma_2}+\jb{s}\norm{g(s)}_{H^{\sigma+1}_2}}\,ds,\\
        \mathcal{B}(t)&:=c_{\sigma,K}\norm{g(t)}_{H^{\sigma+2}_2}\br{\jb{t}^{-2}\norm{\mu}_{H^{\sigma+3}_2}+\jb{t}\norm{g(t)}_{H^{\sigma+3}_2}}.\label{underb}\end{split}
\end{align}
    \end{enumerate}
\end{theorem}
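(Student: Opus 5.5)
Set $X(t):=\norm{g(t)-g^\hbar(t)}_{H^\sigma_2}$ with $g^\hbar=W^{\hbar}[P^\hbar]$. The plan is to derive a differential inequality of the form required by Lemma~\ref{gronwalllemma}, with $a=\mathcal A'$ and $b=\hbar^2\mathcal B$. Since $X(0)=\norm{h_{\rm in}-W^{\hbar}[Q^\hbar_{\rm in}]}_{H^\sigma_2}$, Lemma~\ref{gronwalllemma} then yields~\eqref{quantconclusion} directly. Write $w:=g-g^\hbar$. We compute $\tfrac12\tfrac{d}{dt}X^2=\Re\jb{w,\p_tw}_{H^\sigma_2}$ using the decomposition~\eqref{Twoframes}: the first line is used in regime~\ref{aSLT}, the second in regime~\ref{bSLT}. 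The computation is first carried out for regularised data, or justified by the regularity hypotheses, so that $X^2$ is absolutely continuous; we then pass to the limit.

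\textbf{Regime~\ref{aSLT}.} The five terms of the first line of~\eqref{Twoframes} are handled as follows.
\begin{itemize}
\item The linear term $\Re\jb{w,L_\mu[w]}$ is bounded by Cauchy--Schwarz and Lemma~\ref{lemmaboundsonLN}, giving $\norm{\mu}_{H^{\sigma+1}_2}X^2$.
\item The term $N[g,w]$ would lose a derivative on $w$ if estimated crudely. It is instead treated with the energy cancellation of Lemma~\ref{symmetrylemma}, with $g_1=g$ (real-valued) and $g_2=w$. Choosing $s\in(3/2,\sigma-1]$, which is possible since $\sigma>5/2$, gives $\norm{g}_{H^{s+1}_2}\le\norm{g}_{H^\sigma_2}$. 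The resulting bound is $\jb{t}\,X\bigl(\norm{g}_{H^\sigma_2}\norm{w}_{H^{s+1}_2}+\norm{g}_{H^{s+1}_2}X\bigr)$, and $\norm{w}_{H^{s+1}_2}\le X$, so this contribution is $\lesssim\jb{t}\norm{g}_{H^\sigma_2}X^2$.
\item The term $N[w,g^\hbar]$ places $w$ in the first (low-derivative) slot. Cauchy--Schwarz and Lemma~\ref{lemmaboundsonLN} bound it by $\jb{t}X\bigl(X\norm{g^\hbar}_{H^{s+1}_2}+\norm{w}_{H^s_2}\norm{g^\hbar}_{H^{\sigma+1}_2}\bigr)\lesssim\jb{t}\norm{P^\hbar}_{\mathcal H^{\sigma+1}_2}X^2$.
\item The remainders $R^L_\mu[g^\hbar]$ and $R^N[g^\hbar,g^\hbar]$ are bounded via Cauchy--Schwarz and Proposition~\ref{PropositionRemainderTerms}, which requires only $[K]_1$. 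Using $s+2\le\sigma+2$ and $s+3\le\sigma+3$, they contribute $\hbar^2X\norm{P^\hbar}_{\mathcal H^{\sigma+2}_2}\bigl(\jb{t}^{-2}\norm{\mu}_{H^{\sigma+3}_2}+\jb{t}\norm{P^\hbar}_{\mathcal H^{\sigma+3}_2}\bigr)$, which is exactly $\hbar^2\mathcal B X$.
\end{itemize}

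\textbf{Regime~\ref{bSLT}.} The second line of~\eqref{Twoframes} is handled symmetrically, now using the quantum halves of Lemmas~\ref{lemmaboundsonLN} and~\ref{symmetrylemma}; this is where $[K]_2<\infty$ and $\hbar\le1$ enter.
\begin{itemize}
\item The term $N^\hbar[g^\hbar,w]$ is controlled by the cancellation, with $g_1=g^\hbar$ real-valued because $P^\hbar$ is self-adjoint. This produces $\jb{t}\norm{P^\hbar}_{\mathcal H^\sigma_2}X^2$.
\item The term $N^\hbar[w,g]$ is bounded by Lemma~\ref{lemmaboundsonLN}, producing $\jb{t}\norm{g}_{H^{\sigma+1}_2}X^2$.
\item The remainders, now evaluated at $g$, give $\hbar^2\mathcal BX$ with $\mathcal B$ as in~\eqref{underb}.
\end{itemize}
Collecting constants into $c_{\sigma,K}$ gives $\tfrac12\tfrac{d}{dt}X^2\le \mathcal A'(t)X^2+\hbar^2\mathcal B(t)X$ in both regimes.

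\textbf{Main obstacle.} The delicate step is the transport-type term $N[g,w]$, respectively $N^\hbar[g^\hbar,w]$, in which $w$ is differentiated. One must check that Lemma~\ref{symmetrylemma} applies with $w$ only in $H^\sigma_2$, and that the auxiliary index $s$ can be chosen so that no norm stronger than $X$ falls on $w$. The requirement $\sigma>5/2$ ensures this, since $s+1\le\sigma$ with $s>3/2$. A secondary technical point is the justification of absolute continuity of $X^2$ and of the density arguments in Lemma~\ref{symmetrylemma}. This should follow from the solutions lying in $C([0,T];H^{\sigma+1}_2)$ or better in each regime, where the derivative identity holds in the $H^{\sigma-1}_2$–$H^{\sigma+1}_2$ duality pairing.
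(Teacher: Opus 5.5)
Your proposal is correct and follows the paper's proof essentially step for step. It uses the same two decompositions from \eqref{Twoframes}, Lemma~\ref{symmetrylemma} (with $s\le\sigma-1$, using $\sigma>5/2$) for the term with the difference in the second slot, Lemma~\ref{lemmaboundsonLN} and Proposition~\ref{PropositionRemainderTerms} for the remaining terms, and then Lemma~\ref{gronwalllemma}. One small caveat on your side remark: in each regime the less regular solution is only controlled in $H^\sigma_2$, so the $H^{\sigma-1}_2$--$H^{\sigma+1}_2$ duality justification does not apply as stated and the regularisation route would be needed, though the paper leaves this technicality implicit too.
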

\begin{proof}
  Taking the derivative of the difference for the energy estimate: \begin{align*}
      \frac{1}{2}\frac{d}{dt}\norm{g(t)-g^\hbar(t)}_{H^\sigma_2}^2&=\Re\jb{ g(t)-g^\hbar(t),\p_t\br{g(t)-g^\hbar(t)}}_{H^\sigma_2},
  \end{align*} where the $H^\sigma_2$-inner product is defined in~\eqref{innerproductdefinition}. To deduce the bounds~\eqref{undera}, use the Vlasov-frame decomposition in~\eqref{Twoframes}, yielding
\begin{align*}
    \frac{1}{2}&\frac{d}{dt}\norm{g(t)-g^\hbar(t)}_{H^\sigma_2}^2\\
    &=\Re\jb{ g(t)-g^\hbar(t),L_\mu[g-g^\hbar](t)+ N[g,g-g^\hbar](t)+ N[g-g^\hbar,g^\hbar](t)+ R^{L}_\mu[g^\hbar](t)+ R^{N}[g^\hbar,g^\hbar](t)}_{H^\sigma_2}.
\end{align*} 
For the contribution from $N[g,g-g^\hbar]$, apply the symmetry lemma, Lemma~\ref{symmetrylemma}, with $\sigma>5/2$,
\begin{align*}
    \abs{\Re\jb{g(t)-g^\hbar(t),N[g,g-g^\hbar](t)}_{H^\sigma_2}}\lesssim_K\jb{t}\norm{g(t)}_{H^{\sigma}_2}\norm{g(t)-g^\hbar(t)}_{H^{\sigma}_2}^2.
\end{align*} 
For all other terms use $\abs{\jb{f,g}_{H^\sigma_2}}\leq\norm{f}_{H^\sigma_2}\norm{g}_{H^\sigma_2}$. By Lemma~\ref{lemmaboundsonLN},
\begin{align*}
    \abs{\jb{g(t)-g^\hbar(t),L_\mu[g-g^\hbar](t)}_{H^\sigma_2}}&\lesssim \norm{\mu}_{H^{\sigma+1}_2}\norm{g(t)-g^\hbar(t)}_{H^\sigma_2}^2,\\
    \abs{\jb{g(t)-g^\hbar(t),N[g-g^\hbar,g^\hbar](t)}_{H^\sigma_2}}&\lesssim_K \jb{t}\norm{g^\hbar(t)}_{H^{\sigma+1}_2}\norm{g(t)-g^\hbar(t)}_{H^\sigma_2}^2.
\end{align*}
By Proposition~\ref{PropositionRemainderTerms},
\begin{align*}
    \abs{\jb{g(t)-g^\hbar(t),R^{L}_\mu[g^\hbar](t)}_{H^\sigma_2}}&\lesssim\hbar^2\jb{t}^{-2}\norm{\mu}_{H^{\sigma+3}_2}\norm{g^\hbar(t)}_{H^{\sigma+2}_2}\norm{g(t)-g^\hbar(t)}_{H^\sigma_2},\\
    \abs{\jb{g(t)-g^\hbar(t),R^{N}[g^\hbar,g^\hbar](t)}_{H^\sigma_2}}&\lesssim\hbar^2 \jb{t}\norm{g^\hbar(t)}_{H^{\sigma+3}_2}\norm{g^\hbar(t)}_{H^{\sigma+2}_2}\norm{g(t)-g^\hbar(t)}_{H^\sigma_2}.
\end{align*}

Altogether,
\begin{align*}
    \frac{1}{2}\frac{d}{dt}\norm{g(t)-g^\hbar(t)}_{H^\sigma_2}^2&\lesssim\br{\norm{\mu}_{H^{\sigma+1}_2}+\jb{t}\norm{g(t)}_{H^\sigma_2}+\jb{t}\norm{g^\hbar(t)}_{H^{\sigma+1}_2}}\norm{g(t)-g^\hbar(t)}_{H^\sigma_2}^2\\
    &\hspace{0.2cm}+\hbar^2\norm{g^\hbar(t)}_{H^{\sigma+2}_2}\br{\jb{t}^{-2}\norm{\mu}_{H^{\sigma+3}_2}+\jb{t}\norm{g^\hbar(t)}_{H^{\sigma+3}_2}}\norm{g(t)-g^\hbar(t)}_{H^\sigma_2}.
\end{align*}
Gr\"onwall's inequality, Lemma~\ref{gronwalllemma}, applied with $X(t):=\norm{g(t)-g^\hbar(t)}_{H^\sigma_2}$, yields~\eqref{quantconclusion} with~\eqref{undera}, after choosing $c_{\sigma,K}$ sufficiently large to absorb the implicit constants above; the norm identity of Definition~\ref{quantumnormsdefinition} converts the norms of $g^\hbar$ into those of $P^\hbar$. The bounds~\eqref{underb} follow similarly using the Hartree-frame decomposition in~\eqref{Twoframes}.
\end{proof}

The constants obtained in Theorem~\ref{SemiclassicalLimitTheorem} can now be specified in terms of the data and time horizon, yielding the full statement of Theorem~\ref{maintheorem1}.
\begin{proof}[Proof of Theorem~\ref{maintheorem1}]
Assume the conditions of Theorem~\ref{maintheorem1}~\ref{amaintheorem1} are satisfied. By Theorem~\ref{SemiclassicalLimitTheorem}~\ref{aSLT} and the bound $\int_0^T\jb{s}\,ds\leq T\jb{T}$,
\begin{align*}
    \sup_{t\in[0,T]}\mathcal A(t)\leq a_{\sigma,K}\br{T\norm{\mu}_{H^{\sigma+1}_2}+T\jb{T}M_T}
    \qaq
    \int_0^T\mathcal B(s)\,ds\leq b_{\sigma,K}M_T\br{\norm{\mu}_{H^{\sigma+3}_2}+T\jb{T}M_T},
\end{align*}
after enlarging $a_{\sigma,K},b_{\sigma,K}$ if necessary. Inserting these bounds into~\eqref{quantconclusion} yields~\eqref{mainmainconc} with $C_1(T),C_2(T)$ as in~\eqref{Cconstants}, for all $t\in[0,T]$ and $\hbar\in(0,\delta]$. The proof of Theorem~\ref{maintheorem1}~\ref{bmaintheorem1} follows similarly, after further enlarging the constants.
\end{proof}

\section{Scattering for the Vlasov equation via the semiclassical limit}\label{scatteringproofsection}
In this section, the finite-time estimate of Theorem~\ref{maintheorem1} is combined with the uniform-in-$\hbar$ scattering theory for the Hartree equation~\cite{smith2025phasemixinghartreeequation} to derive the classical large-time theory. The main result is Proposition~\ref{Vlasovscattering}: near Penrose-stable homogeneous states, for small data the Vlasov solution exists globally and scatters, with every bound inherited from the quantum dynamics through the semiclassical limit and no input from the classical damping theory.

\subsection{Uniform-in-$\hbar$ phase mixing for the Hartree equation}
The following theorem records, in the notation of the present paper, the two estimates required from~\cite{smith2025phasemixinghartreeequation}.

\begin{theorem}[Uniform-in-$\hbar$ Hartree phase mixing~{\cite{smith2025phasemixinghartreeequation}}]\label{Smithresult}
Let $\sigma>5/2$ and assume $K\in L^1(\R^3)$ with $[K]_2<\infty$. There exist exponents $\overline\sigma>\sigma_0>\sigma'>\sigma$, with $\sigma'\geq\sigma+3$ and $\overline\sigma\geq\sigma_0+1$, such that the following holds. Let $\mu\in H^{\overline\sigma}_4(\R^3)$ be non-negative and suppose that the pair $(K,\mu)$ satisfies the uniform Penrose condition~\eqref{Penrose2} with constants $\kappa>0$ and $\delta\in(0,1]$. Then there exists $\ep_0=\ep_0(\sigma,\mu,K,\kappa)>0$ such that, if the self-adjoint initial perturbations $\Qin^\hbar$ satisfy
\begin{align}\label{quantumsmallness}
\sup_{\hbar\in(0,\delta]}\sum_{\aal\le2}\norm{\bm{x}^\alpha \Qin^\hbar}_{\mathcal H^{\sigma_0}_2}\le\ep_0,
\end{align}
then, for every $\hbar\in(0,\delta]$, the solution $Q^\hbar$ of~\eqref{NH} exists globally, and there exist constants $\alpha_0,C_b>0$, depending only on $\sigma$, $\mu$, $K$ and $\kappa$, together with scattering profiles $Q^\hbar_\infty\in\mathcal H^\sigma_2$, such that, for all $t\geq0$,
\begin{align}
\sup_{\hbar\in(0,\delta]}\norm{P^\hbar(t)-Q^\hbar_\infty}_{\mathcal H^\sigma_2}&\leq\frac{\alpha_0}{\jb{t}^{3/2}},\label{qhung}\\
\sup_{\hbar\in(0,\delta]}\norm{P^\hbar(t)}_{\mathcal H^{\sigma'}_2}&\leq C_b\jb{t}^{5/2}.\label{qhung2}
\end{align}
\end{theorem}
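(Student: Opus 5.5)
Since Theorem~\ref{Smithresult} is a restatement of results from~\cite{smith2025phasemixinghartreeequation}, the plan is not to re-prove phase mixing from scratch. Instead I would match the hypotheses and conclusions of that work to the present notation, and then derive~\eqref{qhung} and~\eqref{qhung2} from the estimates proved there.

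\textbf{Step 1: match the framework.} First I would check that the objects and norms agree.
The framed operator $P^\hbar(t)=e^{-i\frac{\hbar}{2}t\Delta}Q^\hbar(t)e^{i\frac{\hbar}{2}t\Delta}$ is the profile used in the earlier work. By Lemma~\ref{operatorsWigner}~\ref{intertwiningproperty}, the quantum weights $\bm{x}^\alpha$ and the weighted norms $\mathcal H^{\sigma}_2$ of Definition~\ref{quantumnormsdefinition} coincide with the Wigner-side weighted Sobolev norms there. The Penrose condition~\eqref{Penrose2} is identical to the one assumed there, and the kernel assumption $[K]_2<\infty$ is the one used there.
The exponents $\sigma_0,\sigma',\overline\sigma$ are then fixed as whatever the earlier bootstrap requires. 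I would only record that they can be enlarged so that $\sigma'\ge\sigma+3$ and $\overline\sigma\ge\sigma_0+1$. This costs nothing except a stronger smallness norm in~\eqref{quantumsmallness}, since the earlier theorem holds for every sufficiently large regularity index.

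\textbf{Step 2: the mechanism behind~\eqref{qhung}.} The key input is uniform-in-$\hbar$ decay of the density, and this is where the Penrose condition enters.
By Lemma~\ref{framefourierlemma}~\ref{framedensityidentity} and~\eqref{Vfourier}, the force is governed by $\widehat{g^\hbar}(t,k,kt)$. This satisfies a Volterra equation whose kernel is exactly the symbol appearing in~\eqref{Penrose2}. The uniform Penrose bound gives invertibility of the resolvent uniformly in $\hbar\in(0,\delta]$.
Together with the $\bm{x}^\alpha$ moments of the data (Fourier regularity in $k$), this yields pointwise decay of $\jb{k,kt}^{\sigma}\abs{\widehat{\rho}(t,k)}$ and hence integrable-in-time control of the force.

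\textbf{Step 3: the mechanism behind~\eqref{qhung2}.} Inserting the force decay into the energy estimates gives the polynomial growth bound.
I would use the $H^{\sigma'}_2$ versions of Lemmas~\ref{lemmaboundsonLN} and~\ref{symmetrylemma}, whose implicit constants are uniform in $\hbar\le1$. The nonlinear term loses a factor $\jb{t}$, but this is compensated by the density decay, so the high norm grows at most like $\jb{t}^{5/2}$, giving~\eqref{qhung2}.
The scattering profile is then defined by $Q^\hbar_\infty=P^\hbar(0)+\int_0^\infty\p_tP^\hbar\,ds$, with $\p_tP^\hbar$ given by the Wigner form~\eqref{eqforg^hbar(t)}. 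Bounding $\norm{L^\hbar_\mu[g^\hbar]+N^\hbar[g^\hbar,g^\hbar]}_{H^\sigma_2}\lesssim\jb{s}^{-5/2}$ by interpolating the strong low-norm decay against~\eqref{qhung2} and integrating from $t$ to $\infty$ gives~\eqref{qhung}.

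\textbf{Main obstacle.} The difficult step is the uniformity in $\hbar$ of the Volterra resolvent bounds, together with ensuring the exponents match the present $H^\sigma_2$ convention. In the earlier work this is handled by the uniform Penrose condition. Here I would cite it directly rather than re-derive it, checking only that its constants depend on $(\sigma,\mu,K,\kappa)$ alone.
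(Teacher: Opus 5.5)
Your route is the paper's: Theorem~\ref{Smithresult} is imported from \cite{smith2025phasemixinghartreeequation} by matching notation and indices. Enlarging the indices so that $\sigma'\ge\sigma+3$ and $\overline\sigma\ge\sigma_0+1$ is admissible because only lower bounds on the gaps are prescribed, and $\ep_0$ is decreased so that \eqref{quantumsmallness} implies the earlier smallness hypothesis. However, your assertion that \eqref{Penrose2} is identical to the hypothesis there is incorrect, and this is the one point of the matching that needs an argument. The earlier paper imposes the uniform Penrose bound for all $\hbar\in(0,1]$, whereas \eqref{Penrose2} only controls $\hbar\in(0,\delta]$. For $\delta<1$ the cited theorem therefore does not apply as stated, and Lemma~\ref{Penroseequivalence} cannot supply the range $(\delta,1]$ either, since it also only gives the bound for small $\hbar$. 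The paper closes this by observing that the earlier argument is run at each fixed $\hbar$ and uses the Penrose lower bound only at that value of $\hbar$, with constants depending on $\kappa$ but not on $\hbar$. Its conclusions therefore hold for every $\hbar\in(0,\delta]$ with $\ep_0,\alpha_0,C_b$ independent of $\hbar$.

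Second, the paper takes \eqref{qhung} directly from the cited main theorem and reads \eqref{qhung2} off the global bootstrap control (B1) of \cite[Proposition~1.22]{smith2025phasemixinghartreeequation}. Your Step~3 instead tries to rebuild \eqref{qhung2} from this paper's Lemmas~\ref{lemmaboundsonLN} and~\ref{symmetrylemma}, and as written that does not close. Those lemmas bound the linear contribution to the energy identity by $\norm{\mu}_{H^{\sigma'+1}_2}\norm{g^\hbar}^2_{H^{\sigma'}_2}$ and the nonlinear one by $\jb{t}\norm{g^\hbar}_{H^{s+1}_2}\norm{g^\hbar}^2_{H^{\sigma'}_2}$, so Gr\"onwall yields at best exponential growth in $t$. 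Polynomial growth requires estimates in which the first argument enters only through top-order density norms of $\widehat{g^\hbar}(t,k,kt)$. Their decay comes from the Penrose--Volterra analysis coupled to the high-norm growth inside a bootstrap, which is precisely the content of the earlier paper. Likewise, your interpolation sketch for \eqref{qhung} is unnecessary and does not evidently produce the exact rate $\jb{t}^{-3/2}$. Drop Steps~2--3, cite both bounds, and add the per-$\hbar$ remark above.
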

\begin{proof}
Apply \cite[Theorem~1.6]{smith2025phasemixinghartreeequation} in dimension $d=3$ with $w=K$ and $g=\mu$, so that the steady states and Penrose response functions of the two papers coincide, and identify its scattering, bootstrap and initial-data indices with the present $\sigma$, $\sigma'$ and $\sigma_0$. The conditions on the exponents prescribe only lower bounds on their successive gaps, so these choices are admissible with $\sigma'\geq\sigma+3$, and $\overline\sigma$ may be taken large enough for the regularity of $\mu$; the kernel decay required there follows from $[K]_2<\infty$. The uniform Penrose condition of~\cite{smith2025phasemixinghartreeequation} is formulated for all $\hbar\in(0,1]$, whereas~\eqref{Penrose2} is assumed only for $\hbar\in(0,\delta]$. The argument there is carried out at each fixed $\hbar$ and uses the Penrose bound only at that value of $\hbar$, with constants depending on the Penrose constant but not on $\hbar$, so its conclusions hold for every $\hbar\in(0,\delta]$ under~\eqref{Penrose2}. After decreasing $\ep_0$ if necessary, hypothesis~\eqref{quantumsmallness} implies the smallness condition on the initial data. Its conclusions are the global existence and scattering statements together with~\eqref{qhung}. Finally, the global bootstrap control $\mathrm{(B1)}$ of \cite[Proposition~1.22]{smith2025phasemixinghartreeequation} controls the $\mathcal H^{\sigma'}_2$ norm by $C\jb{t}^{5/2}$ in dimension three, which is~\eqref{qhung2}.
\end{proof}

\subsection{The Penrose conditions}
To transfer Theorem~\ref{Smithresult} to the Vlasov equation, the classical Penrose condition appearing, for example, in~\cite{mou-vil-2011,BedrossianMasmoudiMouhot2018} is now recalled and compared with the uniform condition~\eqref{Penrose2}.

\begin{definition}[Classical Penrose condition]\label{classicalPenrosedefinition}
Let $K\colon\R^3\to\R$ be an interaction kernel and $\mu\in L^1(\R^3;\R_+)$. The pair $(K,\mu)$ satisfies the \textit{classical Penrose condition} if there exists $\kappa>0$ such that
\begin{align}\label{Penrose1}
    \inf_{\substack{\Re\lambda \geq 0 \\ k \in \R^3}} \abs{1 + \widehat{K}(k) \int_0^\infty e^{-\lam t}t \ak^2 \widehat{\mu}\br{kt}\,dt} \geq \kappa.
\end{align}
\end{definition}

The two conditions are equivalent up to constants: the uniform condition implies the classical one with the same constant, and conversely the classical condition implies the uniform one for $\hbar$ small, with half the constant, under slightly more decay of $\widehat\mu$.

\begin{lemma}[Equivalence of the Penrose conditions]\label{Penroseequivalence}
Let $K\in L^1(\R^3)$ and $\mu\in H^{r}_2(\R^3)$.
\begin{enumerate}[label=(\roman*)]
    \item\label{Penroseequivalencei} If $r>2$ and the pair $(K,\mu)$ satisfies the uniform Penrose condition~\eqref{Penrose2} with constant $\kappa>0$, then it satisfies the classical Penrose condition~\eqref{Penrose1} with the same constant $\kappa$.
    \item\label{Penroseequivalenceii} If $r>4$ and $[K]_2<\infty$, and the pair $(K,\mu)$ satisfies the classical Penrose condition~\eqref{Penrose1} with constant $\kappa>0$, then there exists $\delta_0\in(0,1]$, depending only on $r$, $\kappa$, $[K]_2$ and $\norm{\mu}_{H^r_2}$, such that the uniform Penrose condition~\eqref{Penrose2} holds with constants $\kappa/2$ and $\delta_0$.
\end{enumerate}
\end{lemma}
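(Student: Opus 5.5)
Both parts reduce to comparing the quantum and classical response functions,
\[
\Phi^\hbar(\lambda,k):=\frac{2}{\hbar}\widehat K(k)\int_0^\infty e^{-\lambda t}\sin\br{\tfrac12\hbar t\ak^2}\widehat\mu(kt)\,dt,\qquad \Phi^0(\lambda,k):=\widehat K(k)\int_0^\infty e^{-\lambda t}t\ak^2\widehat\mu(kt)\,dt,
\]
on the closed half-plane $\Re\lambda\ge0$. The plan is to show that $\Phi^\hbar\to\Phi^0$ pointwise for part~\ref{Penroseequivalencei}, and uniformly with an explicit rate for part~\ref{Penroseequivalenceii}.

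The basic tool is the pointwise error estimate
\[
\abs{\tfrac2\hbar\sin\br{\tfrac12\hbar t\ak^2}-t\ak^2}\le \tfrac{1}{24}\hbar^2t^3\ak^6,
\]
which follows from $\abs{x-\sin x}\le\abs{x}^3/6$. Integrability comes from Sobolev embedding in the $\eta$ variable. Since $\mu\in H^r_2$, we have $\jb{\eta}^r\widehat\mu\in H^2\hookrightarrow L^\infty$, so $\abs{\widehat\mu(\eta)}\lesssim\norm{\mu}_{H^r_2}\jb{\eta}^{-r}$. The main error integral is then
\[
\int_0^\infty t^3\ak^6\jb{kt}^{-r}\,dt=\ak^2\int_0^\infty u^3\jb{u}^{-r}\,du,
\]
after substituting $u=\ak t$. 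The $u$-integral is finite when $r>4$, which matches the hypothesis of~\ref{Penroseequivalenceii}. Combining with $[K]_2<\infty$, i.e.\ $\ak^2\abs{\widehat K(k)}\lesssim1$, gives
\[
\sup_{\Re\lambda\ge0,\,k}\abs{\Phi^\hbar-\Phi^0}\le C\hbar^2[K]_2\norm{\mu}_{H^r_2}.
\]
Taking $\delta_0$ so that $C\delta_0^2[K]_2\norm{\mu}_{H^r_2}\le\kappa/2$ (and $\delta_0\le1$), the triangle inequality gives $\abs{1+\Phi^\hbar}\ge\kappa-\kappa/2=\kappa/2$ for all $\hbar\in(0,\delta_0]$. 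This proves~\ref{Penroseequivalenceii}.

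For~\ref{Penroseequivalencei}, only $r>2$ is available, so a uniform rate is not needed. Fix $\lambda$ with $\Re\lambda\ge0$ and fix $k\neq0$. The integrands converge pointwise as $\hbar\to0$. They are dominated by $t\ak^2\jb{kt}^{-r}$, since $\abs{\sin x}\le\abs x$ and $\abs{e^{-\lambda t}}\le1$. This dominating function is integrable because, after $u=\ak t$, it reduces to $\int u\jb u^{-r}\,du<\infty$ for $r>2$. Dominated convergence then gives $\Phi^\hbar(\lambda,k)\to\Phi^0(\lambda,k)$. Passing to the limit in $\abs{1+\Phi^\hbar}\ge\kappa$ yields $\abs{1+\Phi^0(\lambda,k)}\ge\kappa$. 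At $k=0$ both response functions vanish (for $\Phi^0$ because of the factor $\ak^2$), so the bound $\abs{1}\ge\kappa$ transfers trivially. Only $K\in L^1$ is used here, through $\widehat K\in L^\infty$, since $\widehat K(k)$ enters only as a fixed factor for fixed $k$.

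The main obstacle is uniformity in part~\ref{Penroseequivalenceii}. The error must be controlled uniformly in both $k$ and $\lambda$, including small and large $\ak$, and the scaling substitution is what makes this work. It converts the $t$-integral into a factor of $\ak^2$, which is exactly what $[K]_2$ absorbs. This is the reason the condition $[K]_2<\infty$ and the decay threshold $r>4$ appear together in the statement.
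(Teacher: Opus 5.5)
Your proposal is correct and follows the paper's proof essentially line for line: the same decay bound $\abs{\widehat\mu(\eta)}\lesssim_r\jb{\eta}^{-r}\norm{\mu}_{H^r_2}$ from $H^2\hookrightarrow L^\infty$, dominated convergence with the substitution $u=\ak t$ for part (i), and for part (ii) the bound $\abs{x-\sin x}\le\abs{x}^3/6$ combined with $\ak^2\abs{\widehat K(k)}\le[K]_2$, the $\hbar^2$-uniform bound on the difference of the response functions, and the choice of $\delta_0$. Your handling of $k=0$ also matches the paper, which notes that both response functions equal $1$ there.
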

\begin{proof}
For $\Re\lambda\geq0$ and $k\in\R^3$, define the classical and quantum response functions appearing in~\eqref{Penrose1} and~\eqref{Penrose2}, respectively, by
\begin{align*}
\mathcal D_0(\lambda,k):=1+\widehat K(k)\int_0^\infty e^{-\lambda t}t\ak^2\widehat\mu(kt)\,dt,
\quad
\mathcal D_\hbar(\lambda,k):=1+\frac{2}{\hbar}\widehat K(k)\int_0^\infty e^{-\lambda t}\sin\br{\frac{1}{2}\hbar t\ak^2}\widehat\mu(kt)\,dt.
\end{align*}
The Sobolev embedding $H^2(\R^3)\hookrightarrow L^\infty(\R^3)$, applied to $\jb{\eta}^{r}\widehat\mu$, gives the decay
\begin{align}\label{mudecay}
\abs{\widehat\mu(\eta)}\lesssim_r\jb{\eta}^{-r}\norm{\mu}_{H^{r}_2},
\end{align}
and, together with the pointwise bound $\abs{\widehat K}\leq\norm{K}_{L^1}$, this makes both response functions well defined, with $\mathcal D_\hbar(\lambda,0)=\mathcal D_0(\lambda,0)=1$.

\ref{Penroseequivalencei} Fix $k\neq0$ and $\Re\lambda\geq0$. As $\hbar\to0$,
\begin{align*}
    \frac{2}{\hbar}\sin\br{\frac{1}{2}\hbar t\ak^2}\rightarrow t\ak^2
    \qaq
    \abs{e^{-\lam t}\frac{2}{\hbar}\sin\br{\frac{1}{2}\hbar t\ak^2}\widehat\mu(kt)}\leq t\ak^2\abs{\widehat\mu(kt)},
\end{align*}
and the dominating function is integrable: by~\eqref{mudecay} and the substitution $s=\ak t$,
\begin{align*}
    \int_0^\infty t\ak^2\abs{\widehat\mu(kt)}\,dt=\int_0^\infty s\abs{\widehat\mu(\tfrac{k}{\ak}s)}\,ds\lesssim_r\norm{\mu}_{H^{r}_2}\int_0^\infty s\jb{s}^{-r}\,ds<\infty,
\end{align*}
as $r>2$. By dominated convergence, $\mathcal D_\hbar(\lambda,k)\to\mathcal D_0(\lambda,k)$ as $\hbar\to0$. Consequently, if $\abs{\mathcal D_\hbar}\geq\kappa$ for all $\hbar\in(0,\delta]$, then $\abs{\mathcal D_0}\geq\kappa$, which is~\eqref{Penrose1} with the same constant.

\ref{Penroseequivalenceii} The elementary inequality $\abs{\sin z-z}\leq\abs{z}^3/6$ and $\abs{e^{-\lambda t}}\leq1$ for $\Re\lambda\geq0$ give
\begin{align}\label{responsefunctiondifference}
\abs{\mathcal D_\hbar(\lambda,k)-\mathcal D_0(\lambda,k)}
\leq
\frac{\hbar^2}{24}\abs{\widehat K(k)}\ak^6\int_0^\infty t^3\abs{\widehat\mu(kt)}\,dt.
\end{align}
For $k\neq0$, the substitution $s=\ak t$ and the decay~\eqref{mudecay} yield
\begin{align*}
\ak^6\int_0^\infty t^3\abs{\widehat\mu(kt)}\,dt
=\ak^2\int_0^\infty s^3\abs{\widehat\mu(\tfrac{k}{\ak}s)}\,ds
\lesssim_r\ak^2\norm{\mu}_{H^r_2}\int_0^\infty s^3\jb{s}^{-r}\,ds
\lesssim_r\ak^2\norm{\mu}_{H^r_2},
\end{align*}
the final integral being finite because $r>4$. Since $\ak^2\abs{\widehat K(k)}\leq[K]_2$, and both response functions equal $1$ at $k=0$, estimate~\eqref{responsefunctiondifference} gives
\begin{align}\label{uniformresponseconvergence}
\sup_{\substack{\Re\lambda\geq0\\k\in\R^3}}
\abs{\mathcal D_\hbar(\lambda,k)-\mathcal D_0(\lambda,k)}
\leq
C_r\hbar^2[K]_2\norm{\mu}_{H^r_2}.
\end{align}
Choose $\delta_0\in(0,1]$ so small that $C_r\delta_0^2[K]_2\norm{\mu}_{H^r_2}\leq\kappa/2$. Then, for every $\hbar\in(0,\delta_0]$, $\Re\lambda\geq0$ and $k\in\R^3$,
\begin{align*}
\abs{\mathcal D_\hbar(\lambda,k)}
\geq
\abs{\mathcal D_0(\lambda,k)}-\abs{\mathcal D_\hbar(\lambda,k)-\mathcal D_0(\lambda,k)}
\geq
\kappa-\frac{\kappa}{2}=\frac{\kappa}{2},
\end{align*}
by~\eqref{Penrose1} and~\eqref{uniformresponseconvergence}, which is~\eqref{Penrose2} with constants $\kappa/2$ and $\delta_0$.
\end{proof}

\subsection{Transfer to the Vlasov equation}
Global existence and scattering for the Vlasov equation are now deduced from the quantum estimates, with no input from the classical damping literature. Given a small classical datum, quantise it exactly for each~$\hbar$ by taking its inverse Wigner transform. The resulting quantum data are exactly aligned with the classical datum, and the corresponding Hartree solutions scatter uniformly in~$\hbar$ by Theorem~\ref{Smithresult}. Combining these quantum estimates with the finite-time semiclassical estimate of Theorem~\ref{maintheorem1} and the standard local well-posedness and propagation theory for the Vlasov equation recorded in Appendix~\ref{PropagationAppendixReference} transfers the quantum bounds to the Vlasov solution in the limit $\hbar\to0$.

The resulting proposition derives, from quantum scattering and the semiclassical limit, a Sobolev-space scattering result for the Vlasov equation of the type established by Bedrossian, Masmoudi and Mouhot~\cite{BedrossianMasmoudiMouhot2018}, thereby completing the programme described in~\cite[Section~1.7]{smith2025phasemixinghartreeequation}. Since the hypotheses are inherited from the quantum theory, they are not optimised from the classical perspective; the sharper classical thresholds are those of~\cite{BedrossianMasmoudiMouhot2018}.

\begin{proposition}[Vlasov scattering via the semiclassical limit]\label{Vlasovscattering}
Let $\sigma>5/2$. Assume $K\in L^1(\R^3)$ with $[K]_2<\infty$, and let $\overline\sigma>\sigma_0>\sigma'>\sigma$ be the exponents of Theorem~\ref{Smithresult}. Let $\mu\in H^{\overline\sigma}_4(\R^3)$ be non-negative, and suppose that the pair $(K,\mu)$ satisfies the classical Penrose condition~\eqref{Penrose1} with constant $\kappa>0$. Then there exists $\ep_0=\ep_0(\sigma,\mu,K,\kappa)>0$ such that, if $\hin\in H^{\sigma_0}_2(\R^3\times\R^3)$ is real-valued and satisfies
\begin{align}\label{classicalsmallness}
\sum_{\aal\le2}\norm{x^\alpha\hin}_{H^{\sigma_0}_2}\le\ep_0,
\end{align}
then the solution $g$ of the Vlasov equation~\eqref{equationforg(t)} with initial datum $\hin$ exists globally, $g \in C([0,\infty); H^{\sigma_0}_2)$, the scattering profile
\begin{equation*}
	h_\infty := \lim_{t \to \infty} g(t) \quad \text{in } H^{\sigma}_2
\end{equation*}
exists, and there exist constants $\alpha_0, C_b > 0$, depending only on $\sigma$, $\mu$, $K$ and $\kappa$, and in particular independent of $\hin$, such that, for all $t \ge 0$,
\begin{align}
	\norm{g(t) - h_\infty}_{H^\sigma_2} &\le \frac{\alpha_0}{\jb{t}^{3/2}}, \label{classicalscatteringtransfer} \\
	\norm{g(t)}_{H^{\sigma'}_2} &\le C_b \jb{t}^{5/2}. \label{classicalregularitytransfer}
\end{align}
\end{proposition}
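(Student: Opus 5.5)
The plan is to quantise the classical datum exactly, run the uniform quantum theory, and pass to the limit $\hbar\to0$ on each finite time interval through Theorem~\ref{maintheorem1}.

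\textbf{Step 1 (quantum data and Penrose).} For each $\hbar$, let $\Qin^\hbar$ be the operator with $W^\hbar[\Qin^\hbar]=\hin$. The Wigner transform is a unitary map (up to the factor in Remark~\ref{HSnormremark}) from Hilbert--Schmidt operators onto $L^2$, so such an operator exists. Since $\hin$ is real-valued, $\Qin^\hbar$ is self-adjoint. By Lemma~\ref{operatorsWigner}~\ref{intertwiningproperty}, $W^\hbar[\bm{x}^\alpha\Qin^\hbar]=x^\alpha\hin$, so Definition~\ref{quantumnormsdefinition} turns \eqref{classicalsmallness} into exactly \eqref{quantumsmallness}, uniformly in $\hbar$. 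By Lemma~\ref{Penroseequivalence}~\ref{Penroseequivalenceii}, which applies because $\mu\in H^{\overline\sigma}_4$ with $\overline\sigma>4$, the classical Penrose condition gives the uniform one with constants $\kappa/2$ and some $\delta_0$. Theorem~\ref{Smithresult} then yields global Hartree solutions satisfying \eqref{qhung}--\eqref{qhung2} uniformly for $\hbar\in(0,\delta_0]$. Here $\ep_0$ is taken to be the quantum one computed with constant $\kappa/2$.

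\textbf{Step 2 (global existence and regularity bound).} Let $T^*$ be the maximal existence time of $g$ in $H^{\sigma_0}_2$, given by the local theory of Proposition~\ref{prop:vlasovLWP}. Fix $T<T^*$. On $[0,T]$ the norm $\|g\|_{C([0,T];H^{\sigma}_2)}$ is finite. By \eqref{qhung2} with $\sigma'\ge\sigma+3$, $\sup_\hbar\|P^\hbar\|_{C([0,T];\mathcal H^{\sigma+3}_2)}\le C_b\jb{T}^{5/2}$. Hypothesis \ref{amaintheorem1} of Theorem~\ref{maintheorem1} therefore holds with some finite $M_T$. The data are exactly aligned, so the initial term vanishes, and we get $\|g(t)-W^\hbar[P^\hbar(t)]\|_{H^\sigma_2}\le C_2(T)\hbar^2\to0$.

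To transfer the $H^{\sigma'}_2$ bound, note that $W^\hbar[P^\hbar(t)]$ is bounded in $H^{\sigma'}_2$ by $C_b\jb{t}^{5/2}$ and converges to $g(t)$ in $H^\sigma_2$. Weak lower semicontinuity (pass to a weakly convergent subsequence in $H^{\sigma'}_2$ and identify the limit) then gives \eqref{classicalregularitytransfer} on $[0,T]$, with a constant independent of $T$.

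\textbf{Main obstacle: continuation.} The difficulty is upgrading this to $T^*=\infty$, since \eqref{classicalregularitytransfer} controls $H^{\sigma'}_2$, while the existence space is $H^{\sigma_0}_2$ with $\sigma_0>\sigma'$. I would invoke the propagation criterion of Proposition~\ref{prop:vlasovLWP}, as described in Remark~\ref{firstRemark}: the $H^{\sigma_0}_2$ norm stays finite on bounded intervals as long as a lower-order norm (here $H^{\sigma'}_2$, or even $H^\sigma_2$ with $\sigma>5/2$) stays bounded. Since \eqref{classicalregularitytransfer} holds with a constant uniform in $T<T^*$, finite $T^*$ would contradict the blow-up alternative. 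Hence $T^*=\infty$ and $g\in C([0,\infty);H^{\sigma_0}_2)$. If the criterion needs the lower norm in a specific space, I would choose the intermediate index accordingly; that is the point to check.

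\textbf{Step 3 (scattering).} Fix $t_1,t_2\ge t$. Then
\[
\|g(t_1)-g(t_2)\|_{H^\sigma_2}\le\sum_{i=1,2}\|g(t_i)-W^\hbar[P^\hbar(t_i)]\|_{H^\sigma_2}+\|P^\hbar(t_1)-P^\hbar(t_2)\|_{\mathcal H^\sigma_2},
\]
and by \eqref{qhung} the last term is at most $2\alpha_0\jb{t}^{-3/2}$. Letting $\hbar\to0$ with $t_1,t_2$ fixed removes the first two terms. Hence $g(t)$ is Cauchy in $H^\sigma_2$ and converges to some $h_\infty$. Letting $t_2\to\infty$ gives \eqref{classicalscatteringtransfer}, with $\alpha_0$ replaced by $2\alpha_0$, which is harmless.
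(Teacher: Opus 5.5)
Your proposal is correct and follows the paper's proof essentially step for step. The shared steps are: exact quantisation of $\hin$ via the inverse Wigner transform; Lemma~\ref{Penroseequivalence}~\ref{Penroseequivalenceii} to obtain the uniform Penrose condition with constant $\kappa/2$; Theorem~\ref{maintheorem1}~\ref{amaintheorem1} with vanishing initial error on each $[0,T]\subset[0,T_{\max})$; weak lower semicontinuity to transfer the $H^{\sigma'}_2$ bound; the continuation criterion of Proposition~\ref{prop:vlasovLWP} with $r=\sigma'\in(5/2,\sigma_0)$; and the Cauchy argument for scattering. The choice $r=\sigma'$ settles the point you flagged, and note that $\overline\sigma\geq\sigma_0+1$ is what lets the local theory run at level $\sigma_0$.

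The only cosmetic difference is that the paper keeps the constant $\alpha_0$. It bounds $\norm{g(t)-g(s)}_{H^\sigma_2}\le\alpha_0\jb{t}^{-3/2}+\alpha_0\jb{s}^{-3/2}$ and lets $t\to\infty$ at fixed $s$.
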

\begin{proof}
Since $\overline\sigma>\sigma_0>4$ and $\mu\in H^{\overline\sigma}_4\subset H^{\overline\sigma}_2$, Lemma~\ref{Penroseequivalence}~\ref{Penroseequivalenceii} provides $\delta_0\in(0,1]$ such that the pair $(K,\mu)$ satisfies the uniform Penrose condition~\eqref{Penrose2} with constants $\kappa/2$ and $\delta_0$. Define $\ep_0$ as the smallness threshold provided by Theorem~\ref{Smithresult} for this pair of constants.

\emph{Step 1: exactly aligned quantum data.}
Let $\hbar\in(0,\delta_0]$. For $f\in L^2(\R^3\times\R^3)$, the \emph{inverse Wigner transform} of $f$, equivalently $(2\pi\hbar)^3$ times its Weyl quantisation, is the operator $(W^\hbar)^{-1}[f]$ on $L^2(\R^3)$ with integral kernel
\begin{align*}
(W^\hbar)^{-1}[f](x,y):=\int_{\R^3}e^{i\xi\cdot(x-y)/\hbar}f\br{\frac{x+y}{2},\xi}\,d\xi,
\end{align*}
which satisfies $W^{\hbar}[(W^\hbar)^{-1}[f]]=f$ by Definition~\ref{Wignerdefinition} and Fourier inversion. Define
\begin{align*}
\Qin^\hbar:=(W^\hbar)^{-1}[\hin].
\end{align*}
Since $W^{\hbar}[\gamma^*]=\overline{W^{\hbar}[\gamma]}$ by Definition~\ref{Wignerdefinition}, the operator $\Qin^\hbar$ is self-adjoint because $\hin$ is real-valued. By Lemma~\ref{operatorsWigner}~\ref{intertwiningproperty} and Definition~\ref{quantumnormsdefinition},
\begin{align*}
\sum_{\aal\le2}\norm{\bm{x}^\alpha\Qin^\hbar}_{\mathcal H^{\sigma_0}_2}
=\sum_{\aal\leq2}\norm{x^\alpha\hin}_{H^{\sigma_0}_2}\le\ep_0,
\end{align*}
so~\eqref{quantumsmallness} holds with $\delta:=\delta_0$. Theorem~\ref{Smithresult} therefore applies: the Hartree solutions $Q^\hbar$ exist globally and satisfy~\eqref{qhung} and~\eqref{qhung2}. 

\emph{Step 2: fixed-time semiclassical convergence on the lifespan.}
Since $\overline\sigma\geq\sigma_0+1$, Proposition~\ref{prop:vlasovLWP}, applied at regularity $\sigma_0$, gives $T_{\max}\in(0,\infty]$ and a unique maximal solution $g\in C([0,T_{\max});H^{\sigma_0}_2)$. Fix $T\in(0,T_{\max})$. Since $\sigma_0>\sigma$, continuity gives $m_T:=\norm{g}_{C([0,T];H^\sigma_2)}<\infty$. Moreover,~\eqref{qhung2} and $\sigma'\geq\sigma+3$ imply
\begin{align*}
\sup_{\hbar\in(0,\delta_0]}
\norm{P^\hbar}_{C([0,T];\mathcal H^{\sigma+3}_2)}
\leq C_b\jb{T}^{5/2}.
\end{align*}
Thus hypothesis~\ref{amaintheorem1} of Theorem~\ref{maintheorem1} holds on $[0,T]$ with $M_T:=m_T+C_b\jb{T}^{5/2}$. Since the data are exactly aligned, the first term in~\eqref{mainmainconc} vanishes, and
\begin{align}\label{transferfixedtime}
\sup_{t\in[0,T]}
\norm{g(t)-W^\hbar[P^\hbar(t)]}_{H^\sigma_2}
\leq C_2(T)\hbar^2.
\end{align}
For each fixed $T\in(0,T_{\max})$, the constant $C_2(T)$ is finite and independent of $\hbar$, so the left-hand side of~\eqref{transferfixedtime} tends to $0$ as $\hbar\to0$.

\emph{Step 3: regularity transfer and global existence.}
Fix $t\in[0,T_{\max})$. By Step~2, applied on $[0,T]$ for any $T\in(t,T_{\max})$, $W^\hbar[P^\hbar(t)]\to g(t)$ in $H^\sigma_2$ as $\hbar\to0$. Moreover, \eqref{qhung2} and Definition~\ref{quantumnormsdefinition} give
\begin{align*}
    \sup_{\hbar\in(0,\delta_0]}\norm{W^\hbar[P^\hbar(t)]}_{H^{\sigma'}_2}\leq C_b\jb{t}^{5/2}.
\end{align*}
Along any sequence $\hbar_n\to0$, a subsequence of $(W^{\hbar_n}[P^{\hbar_n}(t)])_n$ converges weakly in $H^{\sigma'}_2$, and the weak limit coincides with the strong $H^\sigma_2$ limit $g(t)$; weak lower semicontinuity of the norm then gives
\begin{align*}
\norm{g(t)}_{H^{\sigma'}_2}
\leq C_b\jb{t}^{5/2},
\qquad t\in[0,T_{\max}).
\end{align*}
If $T_{\max}<\infty$, then $\sup_{t\in[0,T_{\max})}\norm{g(t)}_{H^{\sigma'}_2}\leq C_b\jb{T_{\max}}^{5/2}<\infty$, contradicting the lower-order continuation criterion of Proposition~\ref{prop:vlasovLWP} with $r:=\sigma'\in(5/2,\sigma_0)$. Hence $T_{\max}=\infty$, proving~\eqref{classicalregularitytransfer}.

\emph{Step 4: scattering.}
Fix $0\leq s\leq t<\infty$. Inserting $Q^\hbar_\infty$ and using~\eqref{qhung},
\begin{align*}
\norm{P^\hbar(t)-P^\hbar(s)}_{\mathcal H^\sigma_2}
\leq\norm{P^\hbar(t)-Q^\hbar_\infty}_{\mathcal H^\sigma_2}+\norm{P^\hbar(s)-Q^\hbar_\infty}_{\mathcal H^\sigma_2}
\leq\frac{\alpha_0}{\jb{t}^{3/2}}+\frac{\alpha_0}{\jb{s}^{3/2}},
\end{align*}
uniformly in $\hbar\in(0,\delta_0]$. By the linearity of the Wigner transform and Definition~\ref{quantumnormsdefinition},
\begin{align*}
\norm{P^\hbar(t)-P^\hbar(s)}_{\mathcal H^\sigma_2}
=\norm{W^\hbar[P^\hbar(t)]-W^\hbar[P^\hbar(s)]}_{H^\sigma_2}
\rightarrow\norm{g(t)-g(s)}_{H^\sigma_2}
\end{align*}
as $\hbar\to0$, by the strong convergence of Step~2, whence
\begin{align*}
\norm{g(t)-g(s)}_{H^\sigma_2}
\leq\frac{\alpha_0}{\jb{t}^{3/2}}+\frac{\alpha_0}{\jb{s}^{3/2}}.
\end{align*}
Hence $(g(t))_{t\geq0}$ is Cauchy in $H^\sigma_2$ as $t\to\infty$, and by completeness admits a limit $h_\infty\in H^\sigma_2$. Letting $t\to\infty$ at fixed $s$ in the last display yields $\norm{g(s)-h_\infty}_{H^\sigma_2}\leq\alpha_0\jb{s}^{-3/2}$, which concludes~\eqref{classicalscatteringtransfer}.
\end{proof}

\section{Uniform-in-time semiclassical convergence}\label{uniformtimesection}
Theorem~\ref{theoremonapplication} is now deduced from the estimates of Sections~\ref{Gronwallsection} and~\ref{scatteringproofsection}; the first step transfers the smallness of the quantum data to the classical datum by weak lower semicontinuity, so that scattering and regularity bounds are available on both sides. These bounds feed back into Theorem~\ref{maintheorem1}: up to polynomial factors, the semiclassical error on $[0,T]$ is of size $\hbar^qe^{AT^{9/2}}$, where $q:=\min\set{2,p}$ combines the alignment order with the semiclassical rate, while after time $T$ both solutions are within $O(T^{-3/2})$ of their scattering profiles. The horizon $T=T(\hbar)$ is chosen to balance the two contributions, yielding the logarithmic rate of~\eqref{uniformrate} uniformly for all $t\in[0,\infty]$.

\begin{proof}[Proof of Theorem~\ref{theoremonapplication}]
Let $\overline\sigma>\sigma_0>\sigma'>\sigma$ be the exponents supplied by Theorem~\ref{Smithresult}. By Lemma~\ref{Penroseequivalence}~\ref{Penroseequivalencei}, the uniform Penrose condition~\eqref{Penrose2} with constant $\kappa$ implies the classical Penrose condition~\eqref{Penrose1} with the same constant. Let $\ep_{\mathrm q}$ and $\ep_{\mathrm c}$ denote the smallness thresholds provided by Theorem~\ref{Smithresult} and Proposition~\ref{Vlasovscattering}, respectively, and set $\ep_0:=\min\set{\ep_{\mathrm q},\ep_{\mathrm c}}$. Hypothesis~\eqref{initialconds} therefore allows Theorem~\ref{Smithresult} to be applied directly to the quantum data, while Step~1 shows that $\hin$ satisfies the smallness hypothesis of Proposition~\ref{Vlasovscattering}.

\emph{Step 1: global existence and scattering on both sides.}
On the quantum side, hypothesis~\eqref{initialconds} is precisely the smallness condition~\eqref{quantumsmallness}, so Theorem~\ref{Smithresult} applies directly to the family $(\Qin^\hbar)_{\hbar\in(0,\delta]}$: the Hartree solutions exist globally, the scattering profiles $Q^\hbar_\infty:=\lim_{t\to\infty}P^\hbar(t)$ exist in $\mathcal H^\sigma_2$, and the estimates~\eqref{qhung} and~\eqref{qhung2} hold.

The classical datum inherits the smallness through the alignment. By Lemma~\ref{operatorsWigner}~\ref{intertwiningproperty} and Definition~\ref{quantumnormsdefinition}, hypothesis~\eqref{initialconds} states that
\begin{align*}
\sup_{\hbar\in(0,\delta]}\sum_{\aal\le2}\norm{x^\alpha W^{\hbar}[\Qin^\hbar]}_{H^{\sigma_0}_2}\le\ep_0.
\end{align*}
By the alignment hypothesis~\ref{alignmentbullet}, $W^{\hbar}[\Qin^\hbar]\to\hin$ in $H^\sigma_2$ as $\hbar\to0$; in particular, $\hin$ is real-valued as the strong limit of the real-valued functions $W^{\hbar}[\Qin^\hbar]$, and $x^\alpha W^{\hbar}[\Qin^\hbar]\to x^\alpha\hin$ in the sense of distributions for each $\aal\le2$. Along a sequence $\hbar_n\to0$, each bounded family $(x^\alpha W^{\hbar_n}[Q^{\hbar_n}_{\mathrm{in}}])_n$ admits a subsequence converging weakly in $H^{\sigma_0}_2$, necessarily to $x^\alpha\hin$; passing to a common subsequence and using weak lower semicontinuity and the superadditivity of the limit inferior,
\begin{align*}
\sum_{\aal\le2}\norm{x^\alpha\hin}_{H^{\sigma_0}_2}
\leq\liminf_{n\to\infty}\sum_{\aal\le2}\norm{x^\alpha W^{\hbar_n}[Q^{\hbar_n}_{\mathrm{in}}]}_{H^{\sigma_0}_2}
\leq\ep_0.
\end{align*}
This is the smallness condition~\eqref{classicalsmallness}, so Proposition~\ref{Vlasovscattering} applies to $\hin$: the Vlasov solution $g$ exists globally, the scattering profile $h_\infty:=\lim_{t\to\infty}g(t)$ exists in $H^\sigma_2$, and the estimates~\eqref{classicalscatteringtransfer} and~\eqref{classicalregularitytransfer} hold. Combining the classical and quantum estimates, and keeping the names $\alpha_0$ and $C_b$ for the enlarged constants, for all $t\geq0$,
\begin{align}\label{hung}
\norm{g(t)-h_\infty}_{H^\sigma_2}
+
\sup_{\hbar\in(0,\delta]}
\norm{P^\hbar(t)-Q^\hbar_\infty}_{\mathcal H^\sigma_2}
&\leq
\frac{\alpha_0}{\jb{t}^{3/2}},\\
\norm{g(t)}_{H^{\sigma'}_2}
+
\sup_{\hbar\in(0,\delta]}
\norm{P^\hbar(t)}_{\mathcal H^{\sigma'}_2}
&\leq
C_b\jb{t}^{5/2}.
\label{hung2}
\end{align}

\emph{Step 2: convergence of the scattering profiles.}
Fix $T\geq1$ and set $q:=\min\set{2,p}\in(0,2]$. By~\eqref{hung2} and $\sigma'\geq\sigma+3$, hypothesis~\ref{amaintheorem1} of Theorem~\ref{maintheorem1} holds on $[0,T]$ with $M_T:=C_b\jb{T}^{5/2}$. Since $\hbar\leq\delta\leq1$, the alignment hypothesis~\ref{alignmentbullet} yields, for all $t\in[0,T]$ and $\hbar\in(0,\delta]$,
\begin{align*}
\norm{g(t)-W^{\hbar}[P^\hbar(t)]}_{H^\sigma_2}
\leq
\br{C_0C_1(T)+C_2(T)}\hbar^q,
\end{align*}
with $C_1(T)$ and $C_2(T)$ as in~\eqref{Cconstants}. Inserting $M_T=C_b\jb{T}^{5/2}$ into~\eqref{Cconstants} and using $\jb{T}\leq2T$ for $T\geq1$, there exist constants $A,\beta>0$, depending only on $\sigma$, $K$, $\norm{\mu}_{H^{\sigma+3}_2}$, $C_b$ and $C_0$, such that
\begin{align}\label{huge2}
\norm{g(t)-W^{\hbar}[P^\hbar(t)]}_{H^\sigma_2}\leq\beta\hbar^qe^{AT^{9/2}}T^7
\end{align}
for all $t\in[0,T]$ and $\hbar\in(0,\delta]$. Decomposing the difference between the scattering profiles at time $T$, and using the linearity and isometry of the Wigner transform (Definition~\ref{quantumnormsdefinition}) together with~\eqref{hung} for the outer terms and~\eqref{huge2} for the middle term,
\begin{align}\label{badger}
\norm{W^{\hbar}[Q^\hbar_\infty]-h_\infty}_{H^\sigma_2}
&\leq
\norm{Q^\hbar_\infty-P^\hbar(T)}_{\mathcal H^\sigma_2}
+
\norm{W^{\hbar}[P^\hbar(T)]-g(T)}_{H^\sigma_2}
+
\norm{g(T)-h_\infty}_{H^\sigma_2}\nonumber\\
&\leq
\alpha_0 T^{-3/2}
+
\beta\hbar^qe^{AT^{9/2}}T^7.
\end{align}
The horizon $T$ is now chosen as a function of $\hbar$ to balance the two terms.
Choose $L_*\geq\max\set{1,2A/q}$ sufficiently large that $2\ln L\leq\tfrac{q}{2}L$ for every $L\geq L_*$, and set $\delta_*:=\min\set{\delta,e^{-L_*}}$. For $\hbar\in(0,\delta_*]$, write $L_\hbar:=\ln(\hbar^{-1})$, so that $L_\hbar\geq L_*$, and define
\begin{align*}
T(\hbar):=\sbr{\frac{1}{A}\br{qL_\hbar-2\ln L_\hbar}}^{2/9}.
\end{align*}
The choice of $L_*$ gives
\begin{align*}
\frac{q}{2}L_\hbar
\leq qL_\hbar-2\ln L_\hbar
\leq qL_\hbar,
\end{align*}
whence $T(\hbar)\geq\sbr{\tfrac{q}{2A}L_*}^{2/9}\geq1$, so~\eqref{badger} applies with $T=T(\hbar)$. Moreover, $T(\hbar)\sim L_\hbar^{2/9}$, with constants depending on $p$ and $A$, and
\begin{align*}
\hbar^qe^{AT(\hbar)^{9/2}}
=e^{-qL_\hbar}e^{qL_\hbar-2\ln L_\hbar}
=L_\hbar^{-2}.
\end{align*}
Consequently,
\begin{align}\label{balancedbounds}
T(\hbar)^{-3/2}\lesssim L_\hbar^{-1/3},
\qquad
\hbar^qe^{AT(\hbar)^{9/2}}T(\hbar)^7
\lesssim L_\hbar^{-2}L_\hbar^{14/9}=L_\hbar^{-4/9}\leq L_\hbar^{-1/3},
\end{align}
using $L_\hbar\geq1$ in the final inequality. Inserting~\eqref{balancedbounds} into~\eqref{badger} yields~\eqref{uniformrate} at $t=\infty$ for all $\hbar\in(0,\delta_*]$.

\emph{Step 3: uniform-in-time convergence.}
Fix $\hbar\in(0,\delta_*]$ and $t\in[0,\infty)$. If $t\leq T(\hbar)$, then~\eqref{huge2} with $T=T(\hbar)$ and~\eqref{balancedbounds} give
\begin{align*}
\norm{W^{\hbar}[P^\hbar(t)]-g(t)}_{H^\sigma_2}
\leq\beta\hbar^qe^{AT(\hbar)^{9/2}}T(\hbar)^7
\lesssim L_\hbar^{-1/3}.
\end{align*}
If $t\geq T(\hbar)$, inserting the scattering profiles and using~\eqref{hung} for the outer terms, together with the linearity and isometry of the Wigner transform, and~\eqref{uniformrate} at $t=\infty$ for the middle term,
\begin{align*}
\norm{W^{\hbar}[P^\hbar(t)]-g(t)}_{H^\sigma_2}
&\leq
\norm{P^\hbar(t)-Q^\hbar_\infty}_{\mathcal H^\sigma_2}
+
\norm{W^{\hbar}[Q^\hbar_\infty]-h_\infty}_{H^\sigma_2}
+
\norm{h_\infty-g(t)}_{H^\sigma_2}\\
&\leq
\alpha_0\jb{t}^{-3/2}
+
C_\infty L_\hbar^{-1/3},
\end{align*}
and $t\geq T(\hbar)$ implies $\jb{t}^{-3/2}\leq T(\hbar)^{-3/2}\lesssim L_\hbar^{-1/3}$. Combining the two cases concludes the bound~\eqref{uniformrate} for all $t\in[0,\infty]$, after enlarging $C_\infty$ if necessary.
\end{proof}

\appendix
\section{Well-posedness and propagation of regularity}\label{PropagationAppendixReference}
This appendix establishes local well-posedness for the Vlasov equation in $H^\sigma_2$, with a continuation criterion in lower-order norms. It then records a conditional propagation statement for the Hartree equation in $\mathcal H^\sigma_2$, uniform in $\hbar$. The results are formulated for the framed solutions $g(t)$ and $P^\hbar(t)$ obtained by factoring out the free transport and Schr\"odinger evolutions.

\subsection{Local well-posedness and continuation for the Vlasov equation}
The following proposition supplies the local well-posedness and lower-order continuation criterion for the moving-frame Vlasov equation~\eqref{equationforg(t)} used in the proof of Proposition~\ref{Vlasovscattering}.

\begin{proposition}[Local well-posedness and continuation for the Vlasov equation]\label{prop:vlasovLWP}
Let $\sigma>5/2$. Assume $\mu\in H^{\sigma+1}_2(\R^3)$ and $[K]_1<\infty$. Then, for every real-valued $\hin\in H^\sigma_2(\R^3\times\R^3)$, there exist $T_{\max}\in(0,\infty]$ and a unique maximal real-valued solution
\begin{align*}
g\in C([0,T_{\max});H^\sigma_2(\R^3\times\R^3))
\end{align*}
of the moving-frame Vlasov equation~\eqref{equationforg(t)} with $g(0)=\hin$. Moreover, if $T_{\max}<\infty$, then, for $r\in(5/2,\sigma]$,
\begin{align*}
\limsup_{t\to T_{\max}^-}\norm{g(t)}_{H^r_2}=\infty.
\end{align*}
\end{proposition}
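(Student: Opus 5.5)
The plan is a standard Picard iteration in $H^\sigma_2$ combined with energy estimates, using the bounds of Section~\ref{energyestimatessection} for the framed operators $L_\mu$ and $N$. Since Lemma~\ref{lemmaboundsonLN} loses one derivative in the second slot of $N$, I would not iterate the Duhamel formula directly. Instead I would regularise. One option is to mollify the data and the transported function, $g_n' = L_\mu[g_n] + J_n N[g_n, J_n g_n]$ with $J_n$ a Fourier cutoff $|(k,\eta)|\le n$ commuting with the weights up to harmless terms. The cleaner alternative is the linear iteration
\[
\p_t g_{n+1} = L_\mu[g_n] + N[g_n, g_{n+1}],
\]
which is a linear transport equation in $g_{n+1}$ along the divergence-free, real characteristic field generated by $\nabla V_{g_n}(t, z+t\xi)$, so it is solvable by characteristics for smooth data.

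\textbf{Uniform bounds.} For the iterates I would use the energy identity in $H^\sigma_2$. Lemma~\ref{symmetrylemma}, applied with $g_1 = g_n$ real and $g_2 = g_{n+1}$ and with $s \in (3/2, \sigma-1]$ (possible since $\sigma > 5/2$), gives
\[
\tfrac{1}{2}\tfrac{d}{dt}\norm{g_{n+1}}^2_{H^\sigma_2} \lesssim \norm{\mu}_{H^{\sigma+1}_2}\norm{g_n}_{H^\sigma_2}\norm{g_{n+1}}_{H^\sigma_2} + \jb{t}\norm{g_n}_{H^\sigma_2}\norm{g_{n+1}}^2_{H^\sigma_2}.
\]
Here the linear term is bounded by Lemma~\ref{lemmaboundsonLN}. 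This yields a time $T_0$, depending only on $\norm{\hin}_{H^\sigma_2}$, on which all iterates stay bounded by $2\norm{\hin}_{H^\sigma_2}$.

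\textbf{Contraction and limit.} I would prove contraction of the differences in the lower norm $H^{\sigma-1}_2$, using Lemma~\ref{symmetrylemma} at level $\sigma-1\ge1$ and Lemma~\ref{lemmaboundsonLN} for $N[g_n - g_{n-1}, g_n]$, which costs one derivative on $g_n$ and is therefore controlled by the $H^\sigma_2$ bound. Interpolation then gives convergence in $C([0,T_0];H^{\sigma'}_2)$ for all $\sigma'<\sigma$. The limit is in $L^\infty H^\sigma_2$ by weak compactness. Continuity in time in $H^\sigma_2$ would follow from the Bona--Smith argument: compare the solutions with mollified data, using the energy identity together with weak continuity and continuity of the norm.

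\textbf{Uniqueness, reality and continuation.} Uniqueness follows from the same $H^{\sigma-1}_2$ difference estimate plus Gr\"onwall. Reality is preserved since the equation has real coefficients. For the continuation criterion, I would redo the energy estimate for $\norm{g}_{H^\sigma_2}$ with Lemma~\ref{symmetrylemma} (taking $g_1 = g_2 = g$, $s$ with $s+1 \le r$) and Lemma~\ref{lemmaboundsonLN}:
\[
\tfrac{d}{dt}\norm{g}_{H^\sigma_2} \lesssim \big(\norm{\mu}_{H^{\sigma+1}_2} + \jb{t}\norm{g}_{H^{r}_2}\big)\norm{g}_{H^\sigma_2}.
\]
Here the mixed terms $\norm{g}_{H^\sigma_2}\norm{g}_{H^{s+1}_2}$ are linear in the top norm. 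Hence a bound on $\norm{g}_{H^r_2}$ up to $T_{\max}$ keeps $\norm{g}_{H^\sigma_2}$ finite, and the local theory restarts past $T_{\max}$, giving a contradiction.

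\textbf{Main obstacle.} This requires $r \ge s+1 > 5/2$, which is exactly the stated range. The main technical obstacle is justifying the energy identities at the top regularity $H^\sigma_2$, where $\p_t g$ lies only in $H^{\sigma-1}_2$. I would handle this with a commutator-type mollification: apply $J_\epsilon$, note that Lemma~\ref{symmetrylemma}'s cancellation survives with a commutator error vanishing as $\epsilon\to0$, and pass to the limit.
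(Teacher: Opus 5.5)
Your uniform-bound step and your continuation argument are sound and agree with the paper's Steps~1 and~4. The gap is in the contraction step, and therefore also in uniqueness.

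\textbf{The failing estimate.} Set $w_{n+1}:=g_{n+1}-g_n$. You bound the transport term $\Re\jb{w_{n+1},N[g_n,w_{n+1}]}_{H^{\sigma-1}_2}$ by Lemma~\ref{symmetrylemma} at level $\sigma-1$. That lemma always returns the product $\norm{g_n}_{H^{\sigma-1}_2}\norm{w_{n+1}}_{H^{s+1}_2}$ with $s>3/2$. This factor comes from the high--low interaction in which the whole multiplier $\eta-kt$ is charged to the second argument. So the lemma measures the difference in $H^{s+1}_2$ with $s+1>5/2$.

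To bound this by your contracting norm $\norm{w_{n+1}}_{H^{\sigma-1}_2}$ you need $s+1\le\sigma-1$, that is $\sigma>7/2$. For $5/2<\sigma\le 7/2$ you only have $\norm{w_{n+1}}_{H^{s+1}_2}\lesssim\norm{w_{n+1}}_{H^\sigma_2}$, which is bounded but not small. Interpolating against $H^{\sigma-1}_2$ instead gives a sublinear differential inequality, which yields neither a contraction nor a Gr\"onwall uniqueness argument. Your other two difference terms, $L_\mu[w_n]$ and $N[w_n,g_n]$, do close at level $\sigma-1$.

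\textbf{The missing idea.} Measure differences where the transport term costs nothing, as the paper does: work in $H^0_2$.
\begin{itemize}
\item For real $v$, $\Re\jb{v,N[g_n,v]}_{L^2}=0$ exactly, because the field $\br{-t\nabla V_{g_n},\nabla V_{g_n}}(t,z+t\xi)$ is divergence-free in $(z,\xi)$.
\item The weights produce only the remainders $N_{<\alpha}[g_n,w_{n+1}]$. These are bounded via~\eqref{VlasovremainderboundN} and Lemma~\ref{importantlemma}~\ref{importantlemmag_1H^s_2g_2L^2}.
\item The remaining term satisfies $\norm{N[w_n,g_n]}_{H^0_2}\lesssim\jb{t}\norm{w_n}_{H^0_2}\norm{g_n}_{H^\sigma_2}$, by Lemma~\ref{importantlemma}~\ref{importantlemmag_1H^0_2g_2H^s} with $s=\sigma-1$.
\end{itemize}
Together these give contraction in $C([0,T_0];H^0_2)$, and uniqueness by Gr\"onwall, for every $\sigma>5/2$. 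Interpolation with your uniform $H^\sigma_2$ bounds then gives convergence in $H^{\sigma'}_2$ for every $\sigma'<\sigma$. After that, the rest of your scheme (weak compactness, Bona--Smith, restart) goes through.

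Alternatively, a paraproduct refinement of the symmetric cancellation closes at $H^{\sigma-1}_2$. There you charge $\eta-kt$ to $g_n$ whenever $\jb{\ell,\ell t}$ dominates $\jb{k-\ell,\eta-\ell t}$. But that is not what Lemma~\ref{symmetrylemma} states.

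\textbf{Comparison and a minor point.} With this repair, your route differs from the paper's mainly in constructing solutions by a linear transport iteration rather than by a frequency-truncated approximation. Your version is in fact more explicit than the paper's sketch. Also note that $T_0$ depends on $\norm{\mu}_{H^{\sigma+1}_2}$, and on the initial time through $\jb{t}$, not only on $\norm{\hin}_{H^\sigma_2}$. This is harmless for the restart argument.
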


\begin{proof}
\emph{Step 1: a priori estimate.} For every $r\in(5/2,\sigma]$, solutions obey
\begin{align}\label{lwpapriori}
\frac{d}{dt}\norm{g(t)}^2_{H^\sigma_2}\lesssim_{\sigma,r,K}\br{\norm{\mu}_{H^{\sigma+1}_2}+\jb{t}\norm{g(t)}_{H^r_2}}\norm{g(t)}^2_{H^\sigma_2},
\end{align}
which follows from Lemma~\ref{lemmaboundsonLN} for the linear term and Lemma~\ref{symmetrylemma}, applied with $s:=r-1>3/2$, for the nonlinear term: the direct bound on $N$ loses one derivative in its second argument, and the symmetric cancellation closes the energy estimate at the level $H^\sigma_2$.

\emph{Step 2: stability and uniqueness.} Fix $s:=\sigma-1>3/2$, and let $w:=g_1-g_2$ denote the difference of two real-valued solutions, which satisfies
\begin{align*}
\p_tw=L_\mu[w]+N[w,g_1]+N[g_2,w].
\end{align*}
The three terms are estimated separately. The linear term satisfies $\norm{L_\mu[w]}_{H^0_2}\lesssim_K\norm{\mu}_{H^1_2}\norm{w}_{H^0_2}$, by Lemma~\ref{lemmaboundsonLN} with $\sigma=0$. 

For $N[w,g_1]$, recall the representation~\eqref{mathcalN}. The weights enter by direct differentiation: for $\aal\leq2$, the Leibniz rule splits $D^\alpha_\eta\mathcal N[w,g_1]$ into the term in which $D^\alpha_\eta$ falls on $\widehat{g_1}$ and lower-order terms in which an $\eta$-derivative removes the factor $\eta-kt$. Using $\abs{\eta-kt}\lesssim\jb{t}\jb{k-\ell,\eta-t\ell}$, every term is of the form treated by Lemma~\ref{importantlemma}~\ref{importantlemmag_1H^0_2g_2H^s}, and, since $s+1=\sigma$, this gives $\norm{N[w,g_1]}_{H^0_2}\lesssim_{\sigma,K}\jb{t}\norm{w}_{H^0_2}\norm{g_1}_{H^\sigma_2}$. 

The remaining term $N[g_2,w]=\nabla V_{g_2}(t,z+t\xi)\cdot(\nabla_\xi-t\nabla_z)w$, with $V_{g_2}$ the self-consistent potential~\eqref{selfconsistentpotential}, is the transport part. For real-valued $v\in H^1$, writing $v(\nabla_\xi-t\nabla_z)v=\tfrac12(\nabla_\xi-t\nabla_z)(v^2)$, grouping the coefficients of $\nabla_z$ and $\nabla_\xi$, and integrating by parts,
\begin{align*}
\Re\jb{v,N[g_2,v]}_{L^2}
=\frac12\int_{\R^6}\br{-t\nabla V_{g_2},\nabla V_{g_2}}\cdot\nabla_{z,\xi}(v^2)\,dzd\xi
=-\frac12\int_{\R^6}\nabla_{z,\xi}\cdot\br{-t\nabla V_{g_2},\nabla V_{g_2}}v^2\,dzd\xi
=0,
\end{align*}
since $\nabla_{z,\xi}\cdot\br{-t\nabla V_{g_2},\nabla V_{g_2}}=-t\Delta V_{g_2}+t\Delta V_{g_2}=0$. For each $\aal\leq2$, Lemma~\ref{decompositionlemma} decomposes $\xi^\alpha N[g_2,w]=N[g_2,\xi^\alpha w]+N_{<\alpha}[g_2,w]$; the cancellation, applied with $v=\xi^\alpha w$, removes the first term from the inner product, and the remainders $N_{<\alpha}[g_2,w]$ are bounded via~\eqref{VlasovremainderboundN} and Lemma~\ref{importantlemma}~\ref{importantlemmag_1H^s_2g_2L^2}. Combining the three estimates yields the low-norm bound
\begin{align}\label{lwpstability}
\frac{d}{dt}\norm{w(t)}^2_{H^0_2}\lesssim_{\sigma,K}\br{\norm{\mu}_{H^{1}_2}+\jb{t}\br{\norm{g_1(t)}_{H^\sigma_2}+\norm{g_2(t)}_{H^\sigma_2}}}\norm{w(t)}^2_{H^0_2},
\end{align}
and Gr\"onwall's inequality applied to~\eqref{lwpstability} gives uniqueness.

\emph{Step 3: construction and continuation.} With~\eqref{lwpapriori}, taken with $r=\sigma$, and~\eqref{lwpstability} in hand, a local solution is constructed by the standard approximation scheme, solving the equation with the frequencies smoothly truncated and passing to the limit; the argument is classical and uses nothing specific to the weighted spaces; see, for instance,~\cite[Chapter~3]{MajdaBertozzi2002}. The local existence time is bounded below in terms of the current $H^\sigma_2$ norm, the fixed parameters and the initial time, so the standard restart argument produces a unique maximal solution on $[0,T_{\max})$ together with the continuation criterion in the top norm: if $T_{\max}<\infty$, then $\limsup_{t\to T_{\max}^-}\norm{g(t)}_{H^\sigma_2}=\infty$.

\emph{Step 4: the lower-order criterion.} Applying Gr\"onwall's inequality to~\eqref{lwpapriori} gives, for every $r\in(5/2,\sigma]$ and all $t\in[0,T_{\max})$,
\begin{align*}
\norm{g(t)}_{H^\sigma_2}\leq\norm{\hin}_{H^\sigma_2}\exp\sbr{C_{\sigma,r,K}\br{t\norm{\mu}_{H^{\sigma+1}_2}+\int_0^t\jb{s}\norm{g(s)}_{H^r_2}\,ds}}.
\end{align*}
Suppose now that $T_{\max}<\infty$ and that, for some $r\in(5/2,\sigma]$,
\begin{align*}
\sup_{t\in[0,T_{\max})}\norm{g(t)}_{H^r_2}<\infty.
\end{align*}
Then the preceding estimate keeps the $H^\sigma_2$ norm bounded up to $T_{\max}$, contradicting the top-norm criterion.
\end{proof}

\subsection{Propagation of regularity for the Hartree equation}\label{PropagationAppendixReferenceHARTREE}
For the Hartree equation, Proposition~\ref{Hartreeregularityproposition} propagates high regularity uniformly in $\hbar$: for any family of solutions whose lower-order norm remains bounded, the full $\mathcal H^\sigma_2$ norm is controlled in terms of the initial data alone. 

\begin{proposition}[Hartree regularity propagation]\label{Hartreeregularityproposition}
Let $T>0$, $\sigma>5/2$ and $\delta\in(0,1]$. Assume $\mu\in H^{\sigma+1}_2(\R^3)$ and $[K]_2<\infty$. For every $\hbar\in(0,\delta]$, let $Q^\hbar\in C([0,T];\mathcal H^\sigma_2)$ solve the Hartree equation~\eqref{NH} with self-adjoint initial datum $\Qin^\hbar\in\mathcal H^\sigma_2$, and set $P^\hbar(t):=e^{-i\frac{\hbar}{2}t\Delta}Q^\hbar(t)e^{i\frac{\hbar}{2}t\Delta}$. If, for some $r\in(5/2,\sigma]$,
\begin{align*}
\sup_{\hbar\in(0,\delta]}\norm{\Qin^\hbar}_{\mathcal H^\sigma_2}
+
\sup_{\hbar\in(0,\delta]}\sup_{t\in[0,T]}\norm{P^\hbar(t)}_{\mathcal H^r_2}
<\infty,
\end{align*}
then
\begin{align*}
\sup_{\hbar\in(0,\delta]}\sup_{t\in[0,T]}\norm{P^\hbar(t)}_{\mathcal H^\sigma_2}
<\infty.
\end{align*}
\end{proposition}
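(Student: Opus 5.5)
The plan is to reproduce, for the framed Hartree solution $g^\hbar=W^{\hbar}[P^\hbar]$, the a priori energy estimate of Step~1 in the proof of Proposition~\ref{prop:vlasovLWP}, with constants uniform in $\hbar\in(0,\delta]$. Then we close it by Gr\"onwall using the assumed uniform lower-order bound.

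By Lemma~\ref{operatorsWigner} and the computation leading to~\eqref{eqforg^hbar(t)}, $g^\hbar$ solves $\p_tg^\hbar=L^\hbar_\mu[g^\hbar]+N^\hbar[g^\hbar,g^\hbar]$. It is real-valued since $P^\hbar$ is self-adjoint, and $\norm{P^\hbar(t)}_{\mathcal H^\sigma_2}=\norm{g^\hbar(t)}_{H^\sigma_2}$ by Definition~\ref{quantumnormsdefinition}. Formally,
\begin{align*}
\frac12\frac{d}{dt}\norm{g^\hbar}_{H^\sigma_2}^2=\Re\jb{g^\hbar,L^\hbar_\mu[g^\hbar]}_{H^\sigma_2}+\Re\jb{g^\hbar,N^\hbar[g^\hbar,g^\hbar]}_{H^\sigma_2}.
\end{align*}
For the linear term we use Cauchy--Schwarz and Lemma~\ref{lemmaboundsonLN}, which needs only $\hbar\le1$ and $[K]_2<\infty$. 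This gives the bound $\lesssim_{\sigma,K}\norm{\mu}_{H^{\sigma+1}_2}\norm{g^\hbar}_{H^\sigma_2}^2$. For the nonlinear term we apply the Hartree part of Lemma~\ref{symmetrylemma} with $g_1=g_2=g^\hbar$ and $s:=r-1>3/2$. Then $\max\{\sigma,s+1\}=\sigma$, and we obtain the bound $\lesssim\jb{t}\norm{g^\hbar}_{H^r_2}\norm{g^\hbar}_{H^\sigma_2}^2$. Altogether,
\begin{align*}
\frac{d}{dt}\norm{g^\hbar(t)}^2_{H^\sigma_2}\le C_{\sigma,r,K}\br{\norm{\mu}_{H^{\sigma+1}_2}+\jb{t}\norm{P^\hbar(t)}_{\mathcal H^r_2}}\norm{g^\hbar(t)}^2_{H^\sigma_2}.
\end{align*}
Gr\"onwall's inequality then gives
\begin{align*}
\norm{P^\hbar(t)}_{\mathcal H^\sigma_2}\le\norm{\Qin^\hbar}_{\mathcal H^\sigma_2}\exp\sbr{C\br{T\norm{\mu}_{H^{\sigma+1}_2}+T\jb{T}\sup_{\hbar,s}\norm{P^\hbar(s)}_{\mathcal H^r_2}}},
\end{align*}
and the right-hand side is finite and independent of $\hbar$ by hypothesis.

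The main obstacle is justifying the differentiation of $\norm{g^\hbar}_{H^\sigma_2}^2$. We only know $Q^\hbar\in C([0,T];\mathcal H^\sigma_2)$, and the nonlinear term loses a derivative in its second slot, so $\p_tg^\hbar$ need not lie in $H^\sigma_2$. I would handle this by a frequency truncation. Let $J_\epsilon$ be a smooth Fourier cutoff in $(k,\eta)$, and compute $\frac{d}{dt}\norm{J_\epsilon g^\hbar}_{H^\sigma_2}^2$, which is legitimate since $J_\epsilon\p_tg^\hbar$ is controlled in $H^\sigma_2$ by Lemma~\ref{unweightedlemma}. The symmetric cancellation of Lemma~\ref{symmetrylemma} must then be run with a commutator $[J_\epsilon,N^\hbar[g^\hbar,\cdot]]$. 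This commutator is bounded uniformly in $\epsilon$ by the same Japanese-bracket difference estimate, since the multiplier $\jb{k,\eta}^\sigma\chi(\epsilon(k,\eta))$ satisfies the same reverse-triangle bound. We conclude by letting $\epsilon\to0$. An alternative is to work at the operator level on the integral Duhamel formulation of~\eqref{framedvonneumann}. At fixed $\hbar$ the potential $V_{g^\hbar}$ is smooth and bounded, so extra regularity is not an issue. I expect the commutator route to be the cleaner one.
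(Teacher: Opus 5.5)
Your proposal is correct and is essentially the paper's proof. The paper likewise uses Lemma~\ref{lemmaboundsonLN} for the linear term, the Hartree case of Lemma~\ref{symmetrylemma} with $s=r-1$ for the nonlinear term, and Gr\"onwall with the uniform $\mathcal H^r_2$ bound. Your frequency-truncation justification is sound, since $\jb{k,\eta}^\sigma\chi(\epsilon(k,\eta))$ satisfies the reverse-triangle bound uniformly in $\epsilon$, but it is unnecessary. At fixed $\hbar$ the multiplier $2\hbar^{-1}\sin\br{\tfrac{\hbar}{2}\ell\cdot(\eta-kt)}$ is bounded by $2/\hbar$, so $N^\hbar[g^\hbar,\cdot]$ loses no derivative and $\p_tg^\hbar\in C([0,T];H^\sigma_2)$. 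Differentiating $\norm{g^\hbar}_{H^\sigma_2}^2$ is therefore already legitimate: the derivative loss is only a failure of uniformity in $\hbar$, which the cancellation lemma repairs.
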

\begin{remark}
Only the conditional statement is needed here, since the Hartree solutions are supplied by Theorem~\ref{Smithresult}. A full quantum analogue of Proposition~\ref{prop:vlasovLWP} nevertheless holds: for $[K]_2<\infty$ and $\hbar\in(0,1]$, the argument extends verbatim to the framed Wigner equation~\eqref{eqforg^hbar(t)}, with the transport cancellation of Step~2 replaced by the sign change of the sine multiplier in the proof of Lemma~\ref{symmetrylemma}, and yields local well-posedness in $\mathcal H^\sigma_2$ with existence time and continuation criterion uniform in $\hbar$.
\end{remark}
\begin{proof}
Fix $\hbar\in(0,\delta]$ and set $g^\hbar(t):=W^{\hbar}[P^\hbar(t)]$. The Hartree evolution and conjugation by the free Schr\"odinger group preserve self-adjointness, so $g^\hbar$ is real-valued. Moreover, $g^\hbar$ solves~\eqref{eqforg^hbar(t)}, and each $H_2$ norm of $g^\hbar$ equals the corresponding $\mathcal H_2$ norm of $P^\hbar$, by Definition~\ref{quantumnormsdefinition}.

The quantum estimates in Lemmas~\ref{lemmaboundsonLN} and~\ref{symmetrylemma}, applied with $s:=r-1>3/2$, give
\begin{align*}
\frac{d}{dt}\norm{g^\hbar(t)}_{H^\sigma_2}^2
\lesssim_{\sigma,r,K}
\br{\norm{\mu}_{H^{\sigma+1}_2}+\jb{t}\norm{g^\hbar(t)}_{H^r_2}}\norm{g^\hbar(t)}_{H^\sigma_2}^2.
\end{align*}
This is the quantum analogue of~\eqref{lwpapriori}. Gr\"onwall's inequality therefore yields
\begin{align*}
\norm{P^\hbar(t)}_{\mathcal H^\sigma_2}
\leq
\norm{\Qin^\hbar}_{\mathcal H^\sigma_2}
\exp\sbr{C_{\sigma,r,K}\br{t\norm{\mu}_{H^{\sigma+1}_2}+\int_0^t\jb{s}\norm{P^\hbar(s)}_{\mathcal H^r_2}\,ds}},
\end{align*}
where $C_{\sigma,r,K}$ is independent of $\hbar$ and $T$. Taking the supremum over $t\in[0,T]$ and $\hbar\in(0,\delta]$ proves the result.
\end{proof}

\section{Japanese bracket inequalities}\label{appendixB}
Standard inequalities for the Japanese bracket $\jb{x}:=(1+\abs{x}^2)^{1/2}$ and related weights are recorded. These extend directly to the double bracket $\jb{x,y}:=\jb{(x,y)}=(1+\abs{x}^2+\abs{y}^2)^{1/2}$ for $(x,y)\in \R^{6}$.
\begin{lemma}\label{japanesebracketlemma}
    Let $x,y\in\R^d$.
    \begin{enumerate}[label=(\roman*)]
        \item Triangle inequality: for $\sigma\geq0$, there exists $C_\sigma>0$ such that\label{JBtriangle}
        \begin{align*}
            \jb{x+y}^\sigma\leq C_\sigma\br{\jb{x}^\sigma+\jb{y}^\sigma}.
        \end{align*}
        \item Peetre inequality:\label{JBproduct}
        \begin{align*}
            \jb{x+y}\leq\sqrt{2}\jb{x}\jb{y}.
        \end{align*}
        \item Shear bound: for $t\in\R$,\label{JBshear}
        \begin{align*}
            \jb{x+ty}\leq\sqrt{2}\jb{t}\jb{x,y}.
        \end{align*}
        \item Symbol bound: for $\sigma\geq0$ and any multi-index $\beta$,\label{JBsymbol}
        \begin{align*}
            \abs{D^\beta_x\jb{x}^\sigma}\lesssim_{\sigma,\beta}\jb{x}^{\sigma-\abs{\beta}}.
        \end{align*}
        \item Reverse triangle inequality: for $\sigma\geq1$,\label{JBreverse}
        \begin{align*}
            \abs{\jb{x}^\sigma-\jb{y}^\sigma}\leq \sigma\br{\jb{x}^{\sigma-1}+\jb{y}^{\sigma-1}}\abs{x-y}.
        \end{align*}
    \end{enumerate}
\end{lemma}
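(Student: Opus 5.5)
The five inequalities are elementary. Each reduces to a one- or two-line estimate on the function $\jb{x}=(1+\abs{x}^2)^{1/2}$. I would prove them in the order \ref{JBproduct}, \ref{JBtriangle}, \ref{JBshear}, \ref{JBsymbol}, \ref{JBreverse}, since the later items use the earlier ones.

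\emph{Peetre and triangle inequalities.} For \ref{JBproduct}, expand the square:
\begin{align*}
\jb{x+y}^2\le1+2\abs{x}^2+2\abs{y}^2\le2(1+\abs{x}^2)(1+\abs{y}^2)=2\jb{x}^2\jb{y}^2,
\end{align*}
which uses $\abs{x+y}^2\le2\abs{x}^2+2\abs{y}^2$. For \ref{JBtriangle}, note that $\jb{x+y}\le\jb{x}+\jb{y}$. This holds because $\jb{\cdot}$ is the Euclidean norm of $(1,x)$, and
\begin{align*}
\abs{(1,x+y)}\le\abs{(1,x)}+\abs{(0,y)}\le\jb{x}+\jb{y}.
\end{align*}
It remains to raise to the power $\sigma$ using the bound $(a+b)^\sigma\le2^{\max\{\sigma-1,0\}}(a^\sigma+b^\sigma)$ for $a,b\ge0$.

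\emph{Shear bound.} For \ref{JBshear}, I would write
\begin{align*}
1+\abs{x+ty}^2\le1+2\abs{x}^2+2t^2\abs{y}^2\le2(1+t^2)(1+\abs{x}^2+\abs{y}^2),
\end{align*}
so that $\jb{x+ty}\le\sqrt2\,\jb{t}\jb{x,y}$.

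\emph{Symbol bound.} For \ref{JBsymbol}, I would argue by induction on $\abs{\beta}$. The claim is that $D^\beta\jb{x}^\sigma$ is a finite sum of terms of the form $c\,x^\gamma\jb{x}^{\sigma-2m}$ with $2m-\abs{\gamma}=\abs{\beta}$. The inductive step uses
\begin{align*}
\p_j\jb{x}^{a}=a\,x_j\jb{x}^{a-2}
\end{align*}
together with the product rule. Each resulting term is then bounded using $\abs{x^\gamma}\le\jb{x}^{\abs{\gamma}}$, which gives $\abs{D^\beta\jb{x}^\sigma}\lesssim_{\sigma,\beta}\jb{x}^{\sigma-\abs{\beta}}$.

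\emph{Reverse triangle inequality.} For \ref{JBreverse}, apply the mean value theorem along the segment $z_\theta=y+\theta(x-y)$, $\theta\in[0,1]$. Since $\abs{\nabla\jb{z}^\sigma}=\sigma\abs{z}\jb{z}^{\sigma-2}\le\sigma\jb{z}^{\sigma-1}$, this gives
\begin{align*}
\abs{\jb{x}^\sigma-\jb{y}^\sigma}\le\sigma\sup_\theta\jb{z_\theta}^{\sigma-1}\abs{x-y}.
\end{align*}
The only point that needs care is bounding $\jb{z_\theta}^{\sigma-1}$ by $\jb{x}^{\sigma-1}+\jb{y}^{\sigma-1}$ without an extra constant. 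Since $\jb{\cdot}$ is a convex function, $\jb{z_\theta}\le\max\{\jb{x},\jb{y}\}$. Because $\sigma-1\ge0$, raising to this power preserves the inequality, so
\begin{align*}
\jb{z_\theta}^{\sigma-1}\le\max\{\jb{x},\jb{y}\}^{\sigma-1}\le\jb{x}^{\sigma-1}+\jb{y}^{\sigma-1}.
\end{align*}
This gives exactly the constant $\sigma$. This step is where the hypothesis $\sigma\ge1$ is used, and it is the only place needing a genuine argument.

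Finally, all the statements extend to the double bracket $\jb{x,y}$ by applying them in $\R^{2d}$.
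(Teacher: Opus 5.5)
Your proof is correct and follows essentially the same route as the paper. The triangle inequality uses the Euclidean norm of $(1,x)$, Peetre uses the same expansion, and the reverse triangle inequality uses convexity of $\jb{\cdot}$ together with the mean value theorem. The differences are cosmetic: you expand $\abs{x+ty}^2$ directly in the shear bound, and you organise the symbol bound as an induction on terms $x^\gamma\jb{x}^{\sigma-2m}$ with $2m-\abs{\gamma}=\abs{\beta}$ rather than the paper's chain-rule expansion in powers of $1+\abs{x}^2$; both variants are valid.
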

\begin{proof}
    Throughout, write $\jb{x}=\abs{(1,x)}$ with $\abs{\cdot}$ the Euclidean norm on $\R^{d+1}$.

    \ref{JBtriangle} The triangle inequality in $\R^{d+1}$ gives $\jb{x+y}=\abs{(1,x)+(0,y)}\leq\jb{x}+\abs{y}\leq\jb{x}+\jb{y}$. Raising to the power $\sigma$, the subadditivity bound $(a+b)^\sigma\leq a^\sigma+b^\sigma$ for $a,b\geq0$ concludes for $0\leq\sigma\leq1$, and the convexity bound $(a+b)^\sigma\leq2^{\sigma-1}\br{a^\sigma+b^\sigma}$ concludes for $\sigma\geq1$; in particular $C_\sigma=\max\set{1,2^{\sigma-1}}$ suffices.

    \ref{JBproduct} Expanding the right-hand side,
    \begin{align*}
        1+\abs{x+y}^2\leq1+2\abs{x}^2+2\abs{y}^2\leq2\br{1+\abs{x}^2}\br{1+\abs{y}^2}.
    \end{align*}

    \ref{JBshear} By the triangle inequality, $\abs{t}\leq\jb{t}$, and the Cauchy--Schwarz inequality,
    \begin{align*}
        \abs{x+ty}\leq\abs{x}+\abs{t}\abs{y}\leq\jb{t}\br{\abs{x}+\abs{y}}\leq\sqrt2\jb{t}\abs{(x,y)},
    \end{align*}
    and hence, using $\jb{t}\geq1$,
    \begin{align*}
        1+\abs{x+ty}^2\leq1+2\jb{t}^2\abs{(x,y)}^2\leq2\jb{t}^2\jb{x,y}^2.
    \end{align*}

    \ref{JBsymbol} The case $\beta=0$ is immediate, so suppose $\aab\geq1$. Since $\jb{x}^\sigma=(1+\abs{x}^2)^{\sigma/2}$, the repeated chain rule gives $D_x^\beta\jb{x}^\sigma$ as a finite linear combination, with coefficients depending only on $\sigma$ and $\beta$, of terms of the form
    \begin{align*}
        (1+\abs{x}^2)^{\sigma/2-r}\prod_{\ell=1}^r D_x^{\gamma_\ell}(1+\abs{x}^2),
    \end{align*}
    where $1\leq r\leq\aab$, the multi-indices $\gamma_\ell$ are non-zero, and $\sum_{\ell=1}^r\abs{\gamma_\ell}=\aab$. Terms with some $\abs{\gamma_\ell}>2$ vanish. For the remaining terms, $\abs{D_x^{\gamma_\ell}(1+\abs{x}^2)}\lesssim\jb{x}^{2-\abs{\gamma_\ell}}$, and therefore $\prod_{\ell=1}^r\abs{D_x^{\gamma_\ell}(1+\abs{x}^2)}\lesssim\jb{x}^{2r-\aab}$. Since $(1+\abs{x}^2)^{\sigma/2-r}=\jb{x}^{\sigma-2r}$, each term is bounded by $\jb{x}^{\sigma-\aab}$, and summing over the finitely many terms concludes.

    \ref{JBreverse} Since $\jb{x}=\abs{(1,x)}$ is the restriction of a norm to an affine subspace, it is convex, so $\jb{y+a(x-y)}\leq(1-a)\jb{y}+a\jb{x}\leq\max\set{\jb{x},\jb{y}}$ for $a\in[0,1]$. By the gradient identity in the proof of~\ref{JBsymbol}, $\abs{\nabla\jb{z}^\sigma}=\sigma\abs{z}\jb{z}^{\sigma-2}\leq\sigma\jb{z}^{\sigma-1}$, and the mean value theorem along the segment from $y$ to $x$ gives
    \begin{align*}
        \abs{\jb{x}^\sigma-\jb{y}^\sigma}\leq\sigma\max\set{\jb{x},\jb{y}}^{\sigma-1}\abs{x-y}\leq\sigma\br{\jb{x}^{\sigma-1}+\jb{y}^{\sigma-1}}\abs{x-y},
    \end{align*}
    using $\sigma\geq1$ in the final step.
\end{proof}
\section*{Acknowledgements}
The author is grateful for valuable discussions with Clément Mouhot and Chiara Saffirio during a residency at the Simons Laufer Mathematical Sciences Institute in Berkeley, California, in the Fall 2025 semester. The residency was supported by the National Science Foundation under Grant No.\ DMS-2424139.

\printbibliography

\end{document}